\documentclass[oneside,12pt]{amsart}
\usepackage{amssymb, amscd}
\usepackage[all]{xy}
\usepackage{enumerate}

\setlength{\vfuzz}{2mm} \setlength{\textwidth}{160mm}
\setlength{\textheight}{200mm} \setlength{\oddsidemargin}{0pt}
\setlength{\evensidemargin}{0pt}

\newtheorem{thm}{Theorem}[section]
\newtheorem{cor}[thm]{Corollary}
\newtheorem{lem}[thm]{Lemma}
\newtheorem{defn}[thm]{Definition}
\newtheorem{prop}[thm]{Proposition}

\newtheorem{conj}[thm]{Conjecture}

\newtheorem{remk}[thm]{Remark}

%\newtheorem{proofT}[thm]{Proof of Theorem 4.10}
%\textheight 24.5cm \textwidth 16.5cm \oddsidemargin -0.5cm
%\evensidemargin -0.5cm
%\topmargin 1cm
%%%%%%%%%%%%%%%%%%%%%%%%%%%%%%%%%%%%%%%%%%%%

\newcommand{\del}[2]{{}}

\newcommand{\Z}{\mathbb Z}
\newcommand{\R}{\mathbb R}
\newcommand{\N}{\mathcal {N}}
\newcommand{\mP}{\mathcal {P}}
\newcommand{\mA}{\mathcal {A}}
\newcommand{\mX}{\mathcal {X}}

\title{New Action-Induced Nested Classes of Groups and Jump (Co)homology}

\author{Nansen Petrosyan}
\address{Department of Mathematics, Catholic University of Leuven, Kortrijk, Belgium}%
\email{Nansen.Petrosyan@kuleuven-kortrijk.be}%

\thanks{The author was supported by the Research Fund K.U.Leuven and FWO-Flanders Research Fellowship.}
\subjclass{}
\keywords{group actions, jump cohomology.}%

\date{\today}

\begin{document}
\begin{abstract}
Using fixed-point-free group actions, we set up a scheme to define nested classes of groups indexed over ordinals. Restricting to cellular
actions on CW-complexes, we find new classes as well as new characterizations for some well-known classes, such as virtually polycyclic
groups. We generalize properties of the virtual cohomological dimension of a group to groups with
jump (co)homology and prove that a core subclass of a new class of groups has jump (co)homology.
\end{abstract}

\maketitle
\tableofcontents

\maketitle

\section{Introduction}

When studying properties of a class of groups, constructing actions on topological spaces can give us important and often essential information.
Many types of groups are defined in this way.
One such example is Kropholler's
class of hierarchically defined  ${\sl H}\mathcal{F}$-groups. This class is defined inductively by cellular actions on finite dimensional
contractible CW-complexes (see Section 2 or \cite {krop}). It turns out to be a very large class,
containing all countable elementary amenable groups, all countable linear groups, and all groups with finite virtual cohomological dimension.

In section 2, we use similar but more general construction to define new, action-induced, nested classes of groups as follows.

\begin{defn}{\label{sc:01}}{\normalfont Let $\mX$ be a class of groups. Suppose $\mP$ is a condition on a space.
 Let $\mA$ be a restriction on the action of a group $G$
that acts on a space with property $\mP$ such that the induced action of each subgroup of $G$ on this space
also has the same restriction.

We define $\N(\mP, \mA, \mX)$ to be the smallest class of groups containing $\mX$
with the property that if a group $G$ acts by $\mA$ on a space with property $\mP$ such that all its isotropy groups are in $\N(\mP, \mA, \mX)$, then $G$ is also in $\N(\mP, \mA, \mX)$.}
\end{defn}

There is also hierarchical definition of $\N(\mP, \mA, \mX)$-groups, by using ordinal
numbers (see Section 2). We have that a group is in $\N(\mP, \mA, \mX)$ if and only if it belongs to  $\N_{\alpha}(\mP, \mA, \mX)$ for some
ordinal $\alpha$.

It is worth pointing out that although, we restrict ourselves to only cellular actions on finite dimensional CW-complexes, the definition allows one to consider many other types of actions, such as for example, algebraic actions on varieties, isometric actions on metric spaces, actions on manifolds and Lie groups, just to name a few.

To fix some notation, when $\mP \subset \{X| X \mbox{ is a finite dimensional CW-complex}\}$ and
$\mA$ defines the action to be cellular, we denote $\N(\mP, \mA, \mX)$ by $\N^{cell}(\mP, \mX)$ and when $\mX$ contains only the trivial group, by $\N^{cell}(\mP)$.

In section 3, we obtain the following descriptions for some well-known classes.

\begin{itemize}
\medskip
\item Let $\mP_1 = \{S^1\}$. Then, $\N^{cell}(\mP_1)$ is the class of finite solvable groups.
\medskip
\item Let $\mP_2 = \{\mathbb T^m| m\in \mathbb N\}$. Then, $\N^{cell}(\mP_2) = \N_1^{cell}(\mP_2)$ is the class of finite groups.
\medskip
\item Let $\mP_{3} = \{S^m| m\in \mathbb N\}$. Then, $\N^{cell}(\mP_{3})$ is the class of finite groups.
\medskip
\item Let $\mP_4 = \{S^1, \mathbb R\}$. Then, $\N^{cell}(\mP_4)$ is the class of polycyclic groups.
\medskip
\item Let $\mP_5 = \{S^m, \mathbb R | m\in \mathbb N\}$. Then, $\N^{cell}(\mP_5)$ is the class of virtually polycyclic groups.
\end{itemize}

%\begin{prop}{\label{sc:02}}{\normalfont Let $\mP_1 = \{S^1\}$. Then, $\N^{cell}(\mP_1)$ is the class of finite solvable groups.}
%\end{prop}
%
%\begin{prop}{\label{sc:03}}{\normalfont Let $\mP_2 = \{\mathbb T^m| m\in \mathbb N\}$. Then, $\N^{cell}(\mP_2) = \N_1^{cell}(\mP_2)$ is the class of finite groups.}
%\end{prop}
%
%\begin{prop}{\label{sc:04}}{\normalfont Let $\mP_{3} = \{S^m| m\in \mathbb N\}$. Then, $\N^{cell}(\mP_{3})$ is the class of finite groups.}
%\end{prop}
%
%\begin{prop}{\label{sc:05}}{\normalfont Let $\mP_4 = \{S^1, \mathbb R\}$. Then, $\N^{cell}(\mP_4)$ is the class of polycyclic groups.}
%\end{prop}
%
%\begin{thm}{\label{sc:06}}{\normalfont Let $\mP_5 = \{S^m, \mathbb R | m\in \mathbb N\}$. Then, $\N^{cell}(\mP_5)$ is the class of virtually polycyclic groups.}
%\end{thm}

In 1980, Olshanski disproved the Baer Conjecture by constructing a non-virtually polycyclic, namely simple and torsion-free, Noetherian group (see \cite{ol}). In the next theorem, we prove that a large class of groups closed under countable directed unions, HNN-extension and amalgamated products has a Tits alternative-like property and satisfies Baer criterion.

\begin{thm}{\label{sc:07}}{\normalfont  Let $\mP_6 = \{X| X=S^m, m\in \mathbb N, \mbox{ or } X  \mbox{ is a locally finite tree}\}$.
Then $\N^{cell}(\mP_6)$ contains all countable elementary amenable groups and all countable locally free groups. Every group in $\N^{cell}(\mP_6)$ either contains a free subgroup on two generators or it is countable elementary amenable. In particular, every Noetherian group in $\N^{cell}(\mP_6)$ is virtually polycyclic.}
\end{thm}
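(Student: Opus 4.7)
My plan is to establish the three assertions separately, exploiting the ordinal filtration $\N^{cell}(\mP_6)=\bigcup_\alpha \N_\alpha^{cell}(\mP_6)$. The dichotomy is the heart of the matter and I will prove it by transfinite induction on the least ordinal $\alpha$ with $G\in\N_\alpha^{cell}(\mP_6)$; the Noetherian consequence then follows formally, while the containment of countable elementary amenable and countable locally free groups is obtained by producing explicit cellular actions.

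For the containment statement, I argue along the filtration. Finite groups act freely and cellularly on some sphere $S^m$, and finitely generated free groups act freely on their locally finite Cayley trees, so both lie in $\N_1^{cell}(\mP_6)$. Closure under extensions is immediate from the definition, since one may pull back an action of a quotient along with all stabilizers still in the class. To handle a countable ascending union $G=\bigcup_n G_n$ with finite indices $[G_{n+1}:G_n]<\infty$, I form the coset telescope tree with vertex set $\bigsqcup_n G/G_n$, joining $gG_n$ to the unique $gG_{n+1}$ containing it; this is a locally finite tree on which $G$ acts cellularly with vertex stabilizers conjugate to the $G_n$. Invoking Chou's description of countable elementary amenable groups as built by iterated extensions and directed unions, arranged to be of finite-index type by interposing suitable extensions, together with the analogous construction applied to an exhausting chain of finitely generated free subgroups, I place every countable elementary amenable group and every countable locally free group in $\N^{cell}(\mP_6)$.

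For the dichotomy I induct on $\alpha$. If $G\in\N_{\alpha+1}^{cell}(\mP_6)$ acts cellularly on $X=S^m$, cell-orbit finiteness forces some cell stabilizer to have finite index in $G$; the inductive dichotomy applied to that stabilizer lifts to $G$, since both ``contains $F_2$'' and ``is countable elementary amenable'' are preserved under finite overgroups. If $X$ is a locally finite tree, classical Bass--Serre theory gives a trichotomy: either $G$ fixes a vertex or inverts an edge, in which case $G$ is virtually a vertex stabilizer and induction applies; or $G$ contains two hyperbolic elements whose axes share no endpoint, whence the ping-pong lemma yields $F_2\leq G$; or $G$ fixes an end or an invariant line. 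In the last case the associated displacement homomorphism $G\to\Z$ (or $G\to D_\infty$) has kernel equal to a countable ascending union of vertex stabilizers along a geodesic ray. By induction each such stabilizer either contains $F_2$, which then propagates to $G$, or is countable elementary amenable, in which case the kernel is a countable directed union of countable elementary amenable groups, still countable elementary amenable, and $G$ is a virtually cyclic extension thereof, hence countable elementary amenable.

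The main obstacle I foresee is the end/line case: one must describe the kernel of the displacement homomorphism precisely as the ascending union of inductive-stage stabilizers and then apply the dichotomy uniformly, while simultaneously tracking countability of $G$ (which follows from cellular action on a countable space). A secondary delicate point is ensuring that the initial embedding of countable elementary amenable groups can be organized around finite-index ascending chains, possibly by inserting intermediate extensions to eliminate infinite-index steps before invoking the coset telescope construction. Finally, the Noetherian assertion is immediate: a Noetherian group cannot contain $F_2$, since the commutator subgroup of $F_2$ fails to be finitely generated, so the dichotomy forces any Noetherian $G\in\N^{cell}(\mP_6)$ to be countable elementary amenable, and the classical fact that a Noetherian elementary amenable group is virtually polycyclic completes the proof.
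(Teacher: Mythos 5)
Your treatment of the dichotomy and of the Noetherian corollary is essentially the paper's argument: transfinite induction on the hierarchy, the sphere case disposed of by a finite-index cell stabilizer, and the tree case by the Pays--Valette alternative, with the fixed-end (or invariant line) case handled by the displacement homomorphism to $\Z$ whose kernel is an ascending union of subgroups of vertex stabilizers, hence countable elementary amenable by induction. That part is sound and matches the paper.

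The containment statement is where your proposal has a genuine gap. Your only mechanism for directed unions is the coset telescope on $\bigsqcup_n G/G_n$, which is a locally finite tree only when every index $[G_{n+1}:G_n]$ is finite, and the reduction you defer to --- reorganizing an arbitrary countable elementary amenable or locally free group into a finite-index ascending chain by ``interposing extensions'' --- is not available. Concretely, $\bigoplus_{n\in\mathbb N}\Z$ is countable abelian but is not an ascending union of finitely generated subgroups with finite indices; and for free groups the Nielsen--Schreier index formula $\mathrm{rk}(H)-1=[K:H]\bigl(\mathrm{rk}(K)-1\bigr)$ forces the ranks along a finite-index ascending chain of finitely generated free groups of rank at least two to be non-increasing, so any such chain stabilizes and its union is again finitely generated free; no countable locally free group of infinite rank can arise from your construction. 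The paper instead routes the containment through Lemma \ref{sc:13}: closure of $\N^{cell}(\mP_6)$ under countable directed unions, amalgamated products and HNN-extensions, obtained by letting the group act on the Bass--Serre tree of the associated graph of groups; combined with the presence of all finite and all finitely generated abelian groups, Chou's description then yields all countable elementary amenable groups and all countable locally free groups. You should argue via such a closure lemma rather than the finite-index telescope (and even there one must verify local finiteness of the relevant tree, or find an alternative action, e.g.\ a free group of countably infinite rank acts freely on the universal cover of a locally finite graph with that fundamental group).
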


Next, we construct a class of groups, denoted by $\N^{cell}(\mP_R)$, which contains all ${\sl H}\mathcal{F}$-groups and it is the largest of all the classes we consider. It is defined by the following property.

\begin{defn}{\label{sc:08}}{\normalfont Suppose $R$ is an integral domain of characteristic zero. We say that a CW-complex $X$ belongs to $\mP_R$
whenever there exist $k\geq 0$ and  $m>0$ (both depending on $X$) such that
\begin{enumerate}[(a)]
\item $H_i(X)$ is $R$-torsion-free torsion group for each $i> k$,

\smallskip

\item $H_{k}(X)=\Z^m\oplus F$, where $F$ is an $R$-torsion-free finite group.
\end{enumerate}}
\end{defn}

These are a quite natural conditions on a CW-complex. For instance, when $R=\mathbb Q$, CW-complexes that have finitely generated homology groups, such as finitely dominated ones, satisfy both conditions.

In general, the subclass $\N^{cell}_1(\mP)$ consists of groups that can act freely and cellularly on a finite dimensional CW-complex with property $\mP$. So, it is quite easy to describe $\N^{cell}_1(\mP_i, \mX)$, $1\leq i\leq 6$.  By a theorem of Adem-Smith (see \ref{sc:3.14}),
every group with periodic cohomology acts freely and cellularly on a finite dimension CW-complex homotopy equivalent to a sphere. It follows then that every such group is in $\N^{cell}_1(\mP_R)$. In the last section, we show that all groups in $\N^{cell}_1(\mP_R)$ have jump cohomology over $R$.

In Section 4, we recall the definition of jump (co)homology and prove it has properties similar to  virtual (co)homological dimension of a group.

\begin{defn}{\label{sc:09}}{\normalfont Let $R$ be a commutative ring with a unit. A discrete group $G$ has
{\it jump (co)homology over $R$} if there exists an
integer $k\geq 0$, such that for each subgroup $H$ of $G$ we have
$hd_R(H)=\infty$ ($cd_R(H)=\infty$) or $hd_R(H)\leq k$
($cd_R(H)\leq k$).  The minimum of all such $k$ will
 be called {\it jump height} and denoted $hjh_R(G)$ ($cjh_R(G)$). }
\end{defn}

Since a group can have infinite torsion and still have jump (co)homology, many properties of virtual cohomological dimension
that hold for only virtually torsion-free groups extend naturally.
For instance, in Proposition \ref{sc:3.1}, we show that a group has jump (co)homology of height zero
if and only if it is all torsion. In Theorem \ref{sc:3.9}, we prove that a finitely generated solvable  group $G$ has finite
Hirsch length if and only if it has jump homology. In Theorem \ref{sc:3.12}, using a theorem of Alperin-Shalen, we prove that a linear group has jump homology if and only if there is an upper bound on the Hirsch lengths of its finitely generated unipotent subgroups.

\begin{thm}{\label{sc:015}}{\normalfont
Let $\mathcal{J_R}$ be the class of groups with jump cohomology over $R$
and let $\mathcal{VCD}$ denote the class of groups with finite virtual cohomological dimension.
Then, $$\mathcal{VCD}\subseteq \N^{cell}_1(\mP_R)\subseteq \mathcal{J_R}.$$}
\end{thm}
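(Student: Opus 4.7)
I would prove the two inclusions separately. The first is a direct construction of a free $G$-CW-complex in $\mP_R$ from a torsion-free finite-index subgroup, while the second extracts a jump bound from the finite free $RH$-chain complex provided by the action.

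For $\mathcal{VCD}\subseteq \N^{cell}_1(\mP_R)$, take $G$ of finite virtual cohomological dimension and pick a torsion-free finite-index subgroup $H\leq G$ with $cd(H)<\infty$. By Serre's theorem, $H$ admits a finite-dimensional $K(H,1)$ whose universal cover $\widetilde Y$ is a contractible finite-dimensional free $H$-CW-complex. The induced space $X:=G\times_H\widetilde Y$ is then a disjoint union of $[G:H]$ contractible components on which $G$ acts freely and cellularly, with $\dim X<\infty$. Its integral homology is $H_0(X)\cong\Z^{[G:H]}$ and zero in positive degrees, placing $X$ in $\mP_R$ with $k=0$, $m=[G:H]$, $F=0$, and hence $G\in\N^{cell}_1(\mP_R)$.

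For $\N^{cell}_1(\mP_R)\subseteq\mathcal{J_R}$, suppose $G$ acts freely and cellularly on $X\in\mP_R$ with $\dim X=d$ and parameters $k,m,F$, and let $H\leq G$ be any subgroup. The restricted action is free and cellular, so $C_*(X;R)$ is a length-$d$ complex of free $RH$-modules. The $R$-torsion-freeness hypotheses of Definition \ref{sc:08}, combined with the universal coefficient theorem, give $H_i(X;R)=0$ for $i>k$ and $H_k(X;R)\cong R^m$. The claim is that $cd_R(H)\leq d-k$ whenever $cd_R(H)<\infty$, which yields $cjh_R(G)\leq d-k$ and hence $G\in\mathcal{J_R}$.

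To prove the claim, I would run the Cartan--Leray spectral sequence
\[
E_2^{p,q}=H^p\bigl(H;H^q(X;M)\bigr)\Longrightarrow H^{p+q}(X/H;\widetilde M),
\]
whose abutment vanishes above total degree $d$ and whose nonzero rows lie in $0\leq q\leq k$. Setting $n=cd_R(H)$, outgoing differentials from $(n,k)$ land in columns $p>n$ where $H^p(H;-)=0$, and incoming differentials originate in rows $q>k$ that are empty; hence $E_\infty^{n,k}=E_2^{n,k}$. If $n>d-k$, the vanishing of the abutment on the total degree $n+k$ forces $E_2^{n,k}=H^n(H;H^k(X;M))=0$; since $H^k(X;M)$ contains $\operatorname{Hom}_R(R^m,M)\cong M^m$ as a direct summand, this yields $H^n(H;M)=0$ for every $M$, contradicting $cd_R(H)=n$. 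The main obstacle is handling the possibly nontrivial $H$-action on $H_k(X;R)=R^m$, which twists the top row to $H^p(H;V^*\otimes_R M)$ for $V=R^m$; resolving this will require either restricting to the finite-index kernel $\ker(H\to\operatorname{Aut}_R V)$ or selecting $M$ via Shapiro's lemma so that $V^*\otimes_R M$ still detects $H^n(H;-)$. A secondary technicality is the auxiliary $\operatorname{Ext}_R^1(H_{k-1}(X;R),M)$-contribution to $H^k(X;M)$ via universal coefficients, which is inert when $R$ is a Dedekind domain and can be absorbed into the coefficient choice otherwise.
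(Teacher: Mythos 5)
Your first inclusion is correct and takes a genuinely different route from the paper. The paper starts from a contractible free $N$-CW-complex $X$ for a normal finite-index $N$ with $cd(N)<\infty$, forms $Y=\mathrm{Map}_N(G,X)$, and multiplies by a torus carrying a free action of $G/N$ (via Proposition \ref{sc:4}), landing in $\mP_R$ with $k=\dim T$ and $m=1$. You instead induce up, $X=G\times_H\widetilde Y$, getting a disjoint union of contractible pieces with $k=0$ and $m=[G:H]$. Your construction is shorter and avoids the free torus action entirely; it works because Definition \ref{sc:08} does not require $X$ to be connected, and normality of $H$ is not needed. Both arguments are valid.

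For the second inclusion your scheme is essentially the paper's Proposition \ref{sc:3.15} specialized to free actions: your Cartan--Leray sequence is the first spectral sequence of the paper's double complex $\mathrm{Hom}(P_*,C^*(X,A))$, and the bound $p+q\leq d$ on the abutment is what the paper's isotropy spectral sequence reduces to when all stabilizers are trivial. However, the step you defer --- the possibly nontrivial action of $H$ on $H_k(X)\cong\Z^m\oplus F$ --- is precisely where the content lies, and as written your bound $cjh_R(G)\leq d-k$ is not established and is stronger than what this method yields. The paper's resolution is the first of your two options: pass to a finite-index subgroup of $G$ acting through $\rho:G\to \mathrm{GL}_m(\Z)$ (legitimate by Proposition \ref{sc:3.4}(1)), run the corner argument on $\ker(\rho|_H)$, and absorb the quotient using $vcd(\mathrm{GL}_m(\Z))=\frac12 m(m-1)$, which produces the height $d-k+\frac12 m(m-1)$. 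Your alternative via Shapiro's lemma is not worked out and is delicate: the natural choice $M=V\otimes M_0$ only exhibits $M_0$ as a summand of $V^*\otimes V\otimes M_0$ after inverting $m$. Since the theorem asserts only membership in $\mathcal{J_R}$ and not a specific height, the gap is repairable by the kernel argument, but it must actually be carried out. Two smaller corrections: the splitting of the universal-coefficient sequence is not $H$-equivariant, so you should not say $M^m$ is a direct summand of $H^k(X;M)$ but instead use the long exact sequence in $H^*(H,-)$ together with $H^{n+1}(H,-)=0$ to get a surjection onto $H^n(H,\mathrm{Hom}(H_k(X),M))$, as the paper does; and $H_k(X;R)\cong R^m$ need not hold, since $\mP_R$ places no restriction on $H_{k-1}(X)$ and hence on the Tor term.
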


Lastly, as an application of the results in sections 4 and 5, we obtain the following

\begin{thm}{\label{sc:016}}{\normalfont Let $G$ be a solvable group and let $X$ be a finite dimensional
$G$-CW-complex. Suppose there exists an integer $t$ such that $\displaystyle{\bigoplus_{i\geq t} H_i(X)}$ is finitely generated and infinite.
Then $G$ has finite Hirsch length if and only if there is an upper bound on the Hirsch lengths of  all the stabilizer subgroups.}
\end{thm}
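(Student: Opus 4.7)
\emph{Easy direction.} Since $G$ is solvable, Hirsch length is monotone on subgroups, so $h(G)<\infty$ forces every stabilizer to have Hirsch length at most $h(G)$.

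\emph{Hard direction.} Suppose every stabilizer has Hirsch length at most $M$. The plan is to realize $X$ as a $\mP_R$-space for a well-chosen $R$, use the Section 5 machinery to give $G$ jump cohomology over $R$, and then convert this back to finite Hirsch length via Theorem \ref{sc:3.9}. To produce $R$: since $\bigoplus_{i\ge t} H_i(X)$ is finitely generated, only finitely many of these homology groups are nonzero and each is finitely generated abelian; and since the sum is infinite, there is a largest $k\ge t$ with $H_k(X)$ infinite. Then $H_k(X)=\Z^m\oplus F$ with $m>0$ and $F$ finite, while $H_i(X)$ is finite for all $i>k$. Let $S$ be the finite set of primes dividing $|F|$ or any $|H_i(X)|$ with $i>k$, and set $R=\Z[S^{-1}]$. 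Then each of $F$ and $H_i(X)$, $i>k$, is $R$-torsion-free, so $X\in\mP_R$ in the sense of Definition \ref{sc:08}.

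Every stabilizer is a solvable subgroup of $G$ of Hirsch length at most $M$, and hence---by a solvable-group, quantitative version of Theorem \ref{sc:3.9} (noting that subgroups of a solvable group of Hirsch length $\le M$ again have Hirsch length $\le M$)---lies in $\mathcal{J_R}$ with jump height uniformly bounded in terms of $M$. I would then invoke the extension of Theorem \ref{sc:015} established in Section 5: a group acting cellularly on a $\mP_R$-space with stabilizers in $\mathcal{J_R}$ of uniformly bounded jump height lies in $\mathcal{J_R}$, with jump height controlled by $M$ and $\dim X$. This yields $hjh_R(G)\le N$ for some $N=N(M,\dim X)$. Now, for any finitely generated $H\le G$, $H$ is a finitely generated solvable group with $hjh_R(H)\le N$ inherited from $G$, so Theorem \ref{sc:3.9} gives a uniform bound on $h(H)$ in terms of $N$. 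Taking supremum over finitely generated $H\le G$ yields $h(G)<\infty$.

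The main obstacle is the middle step: extracting a uniform jump height bound for solvable stabilizers purely from a bound on Hirsch length---especially when the stabilizer is not itself finitely generated---and then propagating that control through the cellular action on $X$. The latter presumably requires either a Cartan--Leray-type spectral sequence computation or a transfinite induction along the ordinal hierarchy $\N_\alpha^{cell}(\mP_R,\mX)$ defining $\N^{cell}(\mP_R,\mX)$. Equally delicate is ensuring a \emph{uniform} quantitative bound on $h(H)$ in terms of $hjh_R(H)$, which is essential for the final supremum to remain finite and may require opening up the proof of Theorem \ref{sc:3.9} rather than using its statement as a black box.
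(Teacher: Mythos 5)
Your overall strategy is the same as the paper's: pick $k$ to be the largest index $\geq t$ with $H_k(X)$ infinite, verify that $X$ lies in $\mP_R$ for a suitable $R$, feed the uniformly bounded stabilizers into the Section 5 machinery, and translate jump height back into Hirsch length via Theorem \ref{sc:3.9}. In particular, the ``extension of Theorem \ref{sc:015}'' you hope to invoke is not something that needs to be established: it is precisely Proposition \ref{sc:3.15}, which is stated exactly in the uniform-jump-height form you need and is proved by the double-complex spectral sequence argument you anticipate. So the skeleton is right.

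The genuine gap is the one you flag yourself at the end, and it is entirely a consequence of your choice $R=\Z[S^{-1}]$. Over that ring, Theorem \ref{sc:3.9}(1) does give $hjh_R(H)\leq h(H)\leq M$ for each stabilizer, but the return trip --- a \emph{uniform, quantitative} bound $h(H)\leq f(hjh_R(H))$ for finitely generated solvable $H\leq G$ --- is not stated in the paper and is not obvious; part (1) only asserts finiteness of $h(H)$ one subgroup at a time, so your final supremum could a priori be infinite without reworking the proof of Theorem \ref{sc:3.9}. The paper sidesteps all of this by taking $R=\mathbb Q$: every abelian group is $\mathbb Q$-torsion-free, so $X\in\mP_{\mathbb Q}$ with no need to invert a carefully chosen set of primes, and, crucially, Theorem \ref{sc:3.9}(2) (via Stammbach's theorem, which applies to \emph{all} solvable groups because everything is $\mathbb F$-torsion-free over the fraction field) gives the exact identity $hjh_{\mathbb Q}(H)=h(H)$ for \emph{every} solvable group $H$, with no finite generation hypothesis and no quantitative slack in either direction. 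With $R=\mathbb Q$ both of your ``delicate'' points evaporate: stabilizers have jump height exactly their Hirsch length, hence uniformly bounded by $M$; Proposition \ref{sc:3.15} then gives $hjh_{\mathbb Q}(G)<\infty$; and $h(G)=hjh_{\mathbb Q}(G)<\infty$ follows immediately.
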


\section{The class of $\N(\mP, \mA, \mX)$-groups}

In \cite{krop}, Kropholler considered a fascinating class of hierarchically defined groups which he denoted by ${\sl H}\mathcal{X}$.
Let $\mathcal{X}$ be a class of groups. ${\sl
H}\mathcal{X}$ can be defined as the smallest class of groups containing
$\mathcal{X}$ with the property that if group $G$ acts cellularly on a
finite dimensional contractible CW-complex with all stabilizer
subgroups in ${\sl H}\mathcal{X}$, then $G$ is in ${\sl
H}\mathcal{X}$.

Let $\mathcal F$ denote the class of finite groups. In \cite{krop},  many properties  of ${\sl H}\mathcal{F}$-groups, such as
subgroup and extension closure, closure under countable direct
unions and free product, were established.
We will use a similar construction to define classes of groups inductively by their actions on topological spaces with a specified underlying
property.

\begin{defn}{\normalfont Let $\mX$ be a class of groups. Suppose $\mP$ is a condition on a space.
Let $\mA$ be a restriction on the action of a group $G$
that acts on a space with property $\mP$ such that the induced action of each subgroup of $G$ on this space
also has the same restriction.

We define $\N(\mP, \mA, \mX)$ to be the smallest class of groups containing $\mX$
with the property that if a group $G$ acts by $\mA$ on a space with property $\mP$ such that all its isotropy groups are in $\N(\mP, \mA, \mX)$, then $G$ is also in $\N(\mP, \mA, \mX)$.}
\end{defn}

\begin{remk}{\normalfont The condition that $X$ satisfies a given property $\mP$ is equivalent to requiring  that it belongs to a chosen set $\mP$ of topological spaces.}
\end{remk}

If $\mX$ is the class of finite groups, $\mP=\{X| X \mbox{ is a finite dimensional contractible CW-complex}\}$, and $\mA$ states that
the action is cellular, then the class $\N(\mP, \mA, \mX)$ is exactly the class of  ${\sl H}{\mathcal F}$-groups.

As with ${\sl H}{\mathcal F}$-groups, there exists an inductive definition of $\N(\mP, \mA, \mX)$-groups, using ordinal
numbers as follows:

\begin{enumerate}[(a)]
\item Let $\N_0(\mP, \mA, \mX)=\mX$.

\smallskip

\item For ordinal $\beta >0$, define $\N_{\beta}(\mP, \mA, \mX)$ to be the class of groups
that can act by $\mA$ on a space $X\in \mP$
such that each isotropy group is in $\N_{\alpha}(\mP, \mA, \mX)$ for some $\alpha < \beta$ ($\alpha$
can depend on the isotropy group).
\end{enumerate}

\smallskip

Clearly, a group is in $\N(\mP, \mA, \mX)$ if and only if it is in  $\N_{\alpha}(\mP, \mA, \mX)$ for some $\alpha$.

%Let $\N(\mP, \mA)$ denote the class $\N(\mP, \mA, \mX)$ where $\mX$ consists of only the trivial group.

\begin{lem}{\label{sc:1}}{\normalfont Let $\mX$ be a subgroup closed class of groups. Then $\N(\mP, \mA, \mX)$ is subgroup closed.
  In addition, if $\N(\mP, \mA, \mX)\mX = \N(\mP, \mA, \mX)$, then $\N(\mP, \mA, \mX)$ is also extension closed.}
\end{lem}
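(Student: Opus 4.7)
The plan is to prove both parts by transfinite induction on the ordinal hierarchy $\{\N_\alpha(\mP, \mA, \mX)\}_\alpha$ introduced after Definition \ref{sc:01}.

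For the subgroup closure, I would induct on the least ordinal $\beta$ such that $G \in \N_\beta(\mP, \mA, \mX)$. The base case $\beta = 0$ is immediate from the hypothesis that $\mX = \N_0(\mP, \mA, \mX)$ is subgroup closed. For the inductive step, suppose $G$ acts by $\mA$ on some $X \in \mP$ with each isotropy group $G_x$ in some $\N_{\alpha(x)}(\mP, \mA, \mX)$ with $\alpha(x) < \beta$, and let $H \leq G$. The requirement built into Definition \ref{sc:01} that $\mA$ is inherited by subgroups guarantees that the restricted action of $H$ on $X$ is again an $\mA$-action; its stabilizers $H \cap G_x$ are subgroups of $G_x$, so by the inductive hypothesis they lie in $\N_{\alpha(x)}(\mP, \mA, \mX)$. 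Therefore $H \in \N_\beta(\mP, \mA, \mX)$.

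For the extension closure under the assumption $\N(\mP, \mA, \mX)\mX = \N(\mP, \mA, \mX)$, I would take an extension $1 \to N \to G \to Q \to 1$ with $N, Q \in \N(\mP, \mA, \mX)$ and induct on the least ordinal $\beta$ with $Q \in \N_\beta(\mP, \mA, \mX)$. When $\beta = 0$ we have $Q \in \mX$, so $G$ is an extension of an $\N(\mP, \mA, \mX)$-group by an $\mX$-group, and the hypothesis $\N(\mP, \mA, \mX)\mX = \N(\mP, \mA, \mX)$ places $G$ in $\N(\mP, \mA, \mX)$ directly. For $\beta > 0$, I would choose an $\mA$-action of $Q$ on some $X \in \mP$ with stabilizers $Q_x \in \N_{\alpha(x)}(\mP, \mA, \mX)$ for $\alpha(x) < \beta$, and pull it back along $G \to Q$. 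Each stabilizer $G_x$ of the pulled-back action is the preimage of $Q_x$, so it fits into a short exact sequence $1 \to N \to G_x \to Q_x \to 1$; the inductive hypothesis applied to this extension yields $G_x \in \N(\mP, \mA, \mX)$, whence $G \in \N(\mP, \mA, \mX)$ as well.

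The point where I would expect to pause is the step where the action of $Q$ is pulled back to $G$: one needs to know that this composite action still falls under the restriction $\mA$. This is automatic in the cellular setting of the paper and more generally holds whenever $\mA$ is preserved by precomposition with a surjection; it deserves to be recorded either as part of the standing convention on $\mA$ or as an explicit additional hypothesis for the extension-closure half of the lemma. Apart from this observation, the two transfinite inductions are routine, the base cases being handled respectively by the subgroup-closure of $\mX$ and by the hypothesis $\N(\mP, \mA, \mX)\mX = \N(\mP, \mA, \mX)$.
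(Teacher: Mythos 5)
Your proposal is correct and follows essentially the same route as the paper: transfinite induction on the ordinal hierarchy, with the base case of extension closure handled by the hypothesis $\N(\mP,\mA,\mX)\mX=\N(\mP,\mA,\mX)$ and the inductive step by pulling back the $Q$-action along $G\to Q$ so that the stabilizers become extensions of $N$ by the stabilizers of $Q$. You additionally write out the subgroup-closure argument that the paper dismisses as ``similar and straightforward,'' and your remark that the pulled-back action must be checked to satisfy $\mA$ is a fair point that the paper's own proof also passes over silently.
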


\begin{proof} We will only prove extension closure. The proof for subgroup closure is similar and straightforward.
%Using transfinite induction on the ordinals $\beta$, we will show that when $G\in \N_{\beta}(\mP, \mA, \mX)$
%and $H\leq G$, then $H\in \N_{\beta}(\mP, \mA, \mX)$.
%
%Let $\beta > 0$. We assume that for all $\alpha$ such that $\alpha < \beta $ our claim
%holds. Suppose now $H \leq G$ and $G\in \N_{\beta}(\mP, \mA, \mX)$. By definition, $G$ and therefore $H$
%acts on a space $X\in \mP$
%such that each isotropy group is inside $\N_{\gamma}(\mP, \mA, \mX)$ for some $\gamma < \beta$.
%By induction, each isotropy subgroup of $H$ belongs to $\N_{\gamma}(\mP, \mA, \mX)$ for some $\gamma < \beta$.
%Hence, $H\in \N_{\beta}(\mP, \mA, \mX)$.

Suppose we have a short exact sequence of groups $$1\rightarrow K\longrightarrow G \longrightarrow Q\rightarrow 1,$$ such that $K\in \N_{\alpha}(\mP, \mA, \mX)$
and $Q\in \N_{\beta}(\mP, \mA, \mX)$. We will use transfinite induction on the ordinal $\beta$ to show that $G\in \N_{\alpha + \beta}(\mP, \mA, \mX)$.

We start with $\beta=0$ as the trivial case. Suppose $\beta>0$ and assume that for each $\gamma < \beta$, if
$Q\in \N_{\gamma}(\mP, \mA, \mX)$, then $G\in \N_{\alpha + \gamma}(\mP, \mA, \mX)$. By definition, $Q$ acts on a space $X\in \mP$
such that each isotropy subgroup is in $\N_{\theta}(\mP, \mA, \mX)$ for some $\theta < \beta$. Using the given epimorphism of $G$ onto $Q$,
we can construct an induced action of $G$ on $X$. The isotropy subgroups of this action are extensions of $K$ by the isotropy subgroups
of $Q$. By induction, each of these groups is in $\N_{\alpha + \theta}(\mP, \mA, \mX)$ for some $\theta < \beta$.
This shows that $G\in \N_{\alpha + \beta}(\mP, \mA, \mX)$, as claimed.

\end{proof}

The hypothesis of the lemma are clearly satisfied if we take $\mX$ as the class containing only the trivial group.
In this case, we denote  $\N(\mP, \mA, \mX)$  by $\N(\mP, \mA)$. Another class of groups
that satisfies these hypothesis is ${\sl H}\mathcal{F}$ (see 2.2 of \cite{krop}).

%In general, if $\mX$ satisfies both hypothesis of Lemma \ref{sc:1}, we will say that this class {\it stable}.

There is a natural partial ordering on the set of triples $(\mP, \mA, \mX)$ defining
$\N(\mP, \mA, \mX)$-classes.

\begin{defn}{\normalfont Let $(\mP, \mA, \mX)$ and $(\mP', \mA', \mX')$ be the triples defining
the groups $\N(\mP, \mA, \mX)$ and $\N(\mP', \mA', \mX')$, respectively. We say $(\mP, \mA, \mX)\leq (\mP', \mA', \mX')$,
 if $\mP\subseteq \mP'$, every action with restriction $\mA$ also has restriction $\mA'$,
and $\mX\subseteq \mX'$.}
\end{defn}

\noindent Clearly, when $(\mP, \mA, \mX)\leq (\mP', \mA', \mX')$, then $\N_{\alpha}(\mP, \mA, \mX)\subseteq \N_{\alpha}(\mP', \mA', \mX')$
for each ordinal $\alpha$ and $\N(\mP, \mA, \mX)\subseteq \N(\mP', \mA', \mX')$.

\medskip

\section{Cellular Actions}
We will assume throughout that whenever a group acts cellularly on a CW-complex,
then the action of the stabilizer group of any cell fixes that cell pointwise.

In this section, we consider classes defined using cellular actions on finite dimensional CW-complexes,
$\mP \subset \{X| X \mbox{ is a finite dimensional CW-complex}\}$ and
$\mA$ defines the action to be cellular. We denote this class by $\N^{cell}(\mP, \mX)$ and when $\mX$ is trivial, by $\N^{cell}(\mP)$.

\begin{prop}{\label{sc:2}}{\normalfont Let $\mP_1 = \{S^1\}$. Then, $\N^{cell}(\mP_1)$ is the class of finite solvable groups.}
\end{prop}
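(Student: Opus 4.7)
My plan is to establish both inclusions separately, each by an induction matched to the definition of $\N^{cell}(\mP_1)$. For the direction that every finite solvable group belongs to $\N^{cell}(\mP_1)$, I would induct on $|G|$. The trivial group lies in $\N^{cell}_0(\mP_1)$. If $G$ is finite, solvable and nontrivial, then $G/[G,G]$ is a nontrivial finite abelian group, so $G$ possesses a normal subgroup $N \lhd G$ with $G/N$ cyclic of prime order. Equip $S^1$ with the CW-structure having $|G/N|$ vertices and $|G/N|$ edges; the cyclic group $G/N$ then acts freely and cellularly on $S^1$ by rotation. Pulling this action back along the projection $G \twoheadrightarrow G/N$ yields a cellular $G$-action on $S^1$ in which every isotropy group equals $N$. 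Since $|N| < |G|$, the inductive hypothesis gives $N \in \N^{cell}(\mP_1)$, and hence $G \in \N^{cell}(\mP_1)$.

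For the reverse direction I would run a transfinite induction on $\alpha$ to show every group in $\N^{cell}_{\alpha}(\mP_1)$ is finite solvable. The case $\alpha=0$ is immediate. Otherwise, suppose $G \in \N^{cell}_{\alpha}(\mP_1)$ acts cellularly on $S^1$ with each isotropy group lying in $\N^{cell}_{\gamma}(\mP_1)$ for some $\gamma < \alpha$, hence finite solvable by induction. Let $K$ denote the kernel of the $G$-action on $S^1$. Since $K$ is contained in any point-stabilizer, $K$ is finite solvable. The quotient $G/K$ acts faithfully and cellularly on $S^1$; the convention that the stabilizer of a cell fixes that cell pointwise forces the cell-permutation representation $G/K \to \mathrm{Sym}(\mbox{cells of } S^1)$ to be injective, so $G/K$ is finite.

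A finite subgroup of $\mathrm{Homeo}(S^1)$ is either cyclic (if orientation-preserving) or dihedral, and in both cases solvable; consequently $G$ sits in a short exact sequence of finite solvable groups and is therefore finite solvable itself. The only genuinely topological input is this classification of finite groups of circle homeomorphisms, which may be obtained either by lifting to the universal cover and using the rotation number, or directly from the cellular data by noting that the finite set of $0$-cells inherits a cyclic ordering on $S^1$ that $G/K$ must preserve or reverse.

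The main obstacle, modest as it is, lies in extracting the faithful cellular action of $G/K$ from the hypothesis and then invoking this classification; everything else reduces to standard bookkeeping within the hierarchical filtration of $\N^{cell}(\mP_1)$, so once the circle-homeomorphism step is accepted, the argument is essentially mechanical.
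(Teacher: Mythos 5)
Your proof is correct and follows essentially the same route as the paper: the forward inclusion is the paper's extension-closure argument (your pullback of the free $G/N$-action on $S^1$ is exactly how that closure lemma is proved), and the reverse inclusion is the paper's transfinite induction combined with its Lemma~\ref{sc:3}, that an effective cellular action on $S^1$ forces the group to be a subgroup of a (finite) dihedral group, which you prove inline via the kernel $K$ and the cell-permutation representation.
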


\begin{proof} First, we will show that the class of finite solvable groups is in $\N^{cell}(\mP_1)$.

Suppose $G$ is a finite solvable group. We proceed by induction on the length $n$ of the given decomposition
of $G$ into cyclic factors.

 Let $H$ be a normal subgroup of length $n-1$. Since $G/H$ acts freely and
 cellularly on $S^1$, from extension closure, we have
that $G\in \N^{cell}(\mP_1)$.

Conversely, suppose $G\in \N^{cell}(\mP_1)$. By transfinite induction, we can assume that $G$ acts cellularly on a
circle such that each stabilizer subgroup is finite solvable. The following
easy lemma finishes the proof.
\end{proof}

\begin{lem}{\label{sc:3}} {\normalfont A group $G$  acts effectively and cellularly on $S^1$ if and only if
 it is a subgroup of a dihedral group.}
\end{lem}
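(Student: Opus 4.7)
The plan is to pin down the shape of an effective cellular action on $S^1$ directly, using compactness of $S^1$ together with the paper's standing convention that a cell stabilizer fixes its cell pointwise.

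Forward direction. Assume $G$ acts effectively and cellularly on $S^1$. Since $S^1$ is compact, the $G$-invariant CW-structure is finite; let $V$ be the set of $0$-cells, and set $n=|V|$. One needs $n\ge 1$ (otherwise $S^1$ would consist of a single $1$-cell with empty attaching locus, which is impossible), and the $n$ open arcs of $S^1\setminus V$ are precisely the $1$-cells. Because every element of $G$ is a self-homeomorphism of $S^1$ and therefore either preserves or reverses the cyclic order on $V$, the action on vertices induces a homomorphism $\phi\colon G\to D_n$, where $D_n$ denotes the dihedral group of cyclic-order symmetries of $V$. The main step is to show $\phi$ is injective: if $g\in\ker\phi$ then $g$ fixes every vertex, hence stabilizes every edge setwise (each edge is uniquely determined by its two endpoints in $S^1$); by the blanket convention, $g$ then fixes every edge pointwise and so acts trivially on $S^1$, whence $g=1$ by effectiveness. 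Thus $G\hookrightarrow D_n$.

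Reverse direction. Given $H\le D_n$, realize $D_n$ as the standard dihedral subgroup of $O(2)$ acting isometrically on $S^1$. Choose an $H$-invariant finite subset $V\subset S^1$ with at least two points that also contains every reflection-fixed point of every nontrivial element of $H$; declare $V$ to be the $0$-skeleton and the complementary open arcs to be the $1$-cells. With this CW-structure the action is cellular and, by construction, no nontrivial element stabilizes an edge (the would-be reflection-through-an-edge axis-points have been promoted to vertices), so every edge stabilizer is trivial and the pointwise stabilizer convention is satisfied; effectiveness is inherited from the faithful action of $D_n$ on $S^1$.

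The only real subtlety is in this converse: on the naive regular $n$-gon CW-structure a reflection through an edge midpoint would stabilize the edge without fixing it pointwise, contradicting the standing convention, so one must refine the CW-structure to push all such midpoints into the $0$-skeleton. Once this is arranged, the lemma falls out of the single fact that a self-homeomorphism of $S^1$ is determined by its action on any finite subset together with its effect on the cyclic orientation.
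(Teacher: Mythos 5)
The paper states this lemma without proof (it is dismissed as an ``easy lemma''), so there is no official argument to compare against; your proposal is a genuine attempt to supply one, and your converse direction is correct and carefully handles the one real pitfall there (reflections through edge midpoints versus the pointwise-stabilizer convention). The forward direction, however, has a gap at small vertex counts. Your injectivity argument for $\phi\colon G\to D_n$ rests on the claim that ``each edge is uniquely determined by its two endpoints in $S^1$,'' and this is false when $n=2$: the digon structure on $S^1$ has two $1$-cells sharing the same pair of endpoints, and the reflection through the axis joining the two vertices fixes both vertices while swapping the two edges. The group $\Z/2$ generated by this reflection acts effectively and cellularly on the digon in a way compatible with the paper's standing convention (each vertex stabilizer fixes its vertex, each edge stabilizer is trivial), yet it lies entirely in $\ker\phi$. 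So $\phi$ is genuinely not injective in that case; the conclusion of the lemma still holds there, but your proof does not establish it.

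The fix is easy and worth recording. Either treat $n\le 2$ separately (for $n=1$ every element stabilizes the unique vertex and the unique edge and hence is trivial; for $n=2$ one checks the few admissible subgroups by hand), or replace $\phi$ by the action of $G$ on the incidence graph of the cell structure: with $n$ vertices and $n$ edges this graph is a $2n$-cycle, its automorphism group is the dihedral group of order $4n$, and the kernel of $G\to\mathrm{Aut}(2n\text{-cycle})$ consists of elements stabilizing every cell, which is trivial by the pointwise-stabilizer convention together with effectiveness. This embeds $G$ into $D_{2n}$ uniformly in $n$ and avoids the endpoint-determines-edge issue altogether.
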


\begin{prop}{\label{sc:4}}{\normalfont Let $\mP_2 = \{\mathbb T^m| m\in \mathbb N\}$. Then, $\N^{cell}(\mP_2) = \N_1^{cell}(\mP_2)$ is the class of finite groups.}
\end{prop}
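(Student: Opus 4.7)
The plan is to establish two containments which, together with the trivial inclusion $\N_1^{cell}(\mP_2) \subseteq \N^{cell}(\mP_2)$, yield the claimed chain of equalities: I will show that $\N^{cell}(\mP_2)$ contains only finite groups, and that every finite group already lies in $\N_1^{cell}(\mP_2)$.

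For the containment $\N^{cell}(\mP_2) \subseteq \mathcal F$, I would argue by transfinite induction on the ordinal $\alpha$ with $G \in \N_\alpha^{cell}(\mP_2)$. The case $\alpha = 0$ is trivial since $\N_0 = \{1\}$. For the induction step, $G$ acts cellularly on some $\mathbb T^m$ with every isotropy subgroup in $\N_\beta^{cell}(\mP_2)$ for some $\beta < \alpha$, and by induction all such isotropy groups are finite. The decisive observation is that $\mathbb T^m$ is compact and a compact CW-complex admits only finitely many cells. Under the standing convention that each cell is fixed pointwise by its stabilizer, the kernel $N$ of the induced permutation action $G \to \operatorname{Sym}(\mathrm{cells})$ coincides with the ineffective kernel of the action on $\mathbb T^m$; since $N$ is contained in every isotropy group it is finite, and $G/N$ embeds in a finite symmetric group, forcing $G$ to be finite.

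For the reverse containment $\mathcal F \subseteq \N_1^{cell}(\mP_2)$, I would invoke the classical realization theorem of Auslander and Kuranishi: every finite group $G$ arises as the holonomy of a Bieberbach group. That is, one can find a torsion-free group $\Gamma$ fitting in a short exact sequence $1 \to \Z^n \to \Gamma \to G \to 1$ with $\Gamma$ acting freely and properly discontinuously on $\mathbb R^n$ by Euclidean isometries. The induced action of $G = \Gamma/\Z^n$ on $\mathbb T^n = \mathbb R^n/\Z^n$ is then free, and lifting any CW-structure on the closed flat manifold $\mathbb T^n/G$ produces a $G$-invariant CW-structure on $\mathbb T^n$ with respect to which $G$ acts freely and cellularly, placing $G$ into $\N_1^{cell}(\mP_2)$.

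The main obstacle is this second containment. A free cellular action of an arbitrary (possibly non-solvable) finite group on some torus cannot be produced by elementary means such as iterating circle actions via Lemma \ref{sc:1}, because extension closure only yields the solvable subclass. Instead one must appeal to the non-elementary fact that every finite group is realized as the holonomy of a flat Riemannian manifold; the corresponding twisted affine action furnishes the required free cellular action on $\mathbb T^n$.
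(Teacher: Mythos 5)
Your proof is correct, but the containment $\mathcal F \subseteq \N_1^{cell}(\mP_2)$ is established by a genuinely different route from the paper's. The paper does \emph{not} need Auslander--Kuranishi: for each $g\in G$ it takes a free cellular action of the cyclic group $H=\langle g\rangle$ on $S^1$ and forms the coinduced space $X_g=\mathrm{Map}_H(G,S^1)$, which is a torus $\prod_{i=1}^{[G:H]}S^1$ on which $G$ acts (by right translation in the argument) so that every stabilizer meets $H$ trivially; the product $X=\prod_{g\in G}X_g$ is again a torus and the intersection of the stabilizers over all $g$ is trivial, so $G$ acts freely and cellularly on it. This coinduction trick is elementary, self-contained, and is reused later in the paper (in the proof of Theorem \ref{sc:3.16}), so your closing claim that a free action of an arbitrary finite group on a torus ``cannot be produced by elementary means'' and that one \emph{must} appeal to the holonomy realization theorem is mistaken --- extension closure is indeed insufficient, but coinduction is not. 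Your Bieberbach-group argument is nonetheless valid: the holonomy action of $G=\Gamma/\Z^n$ on $\mathbb T^n=\R^n/\Z^n$ is free because $\Gamma$ is torsion-free and acts freely on $\R^n$, and lifting a CW-structure from the flat manifold $\mathbb T^n/G$ does give a free cellular action; what it costs is reliance on a deep external theorem where the paper needs only that $\Z/k$ acts freely on $S^1$. Your treatment of the other containment (compactness of $\mathbb T^m$ forces finitely many cells, the kernel of $G\to\operatorname{Sym}(\mathrm{cells})$ lies in every isotropy group and is finite by the inductive hypothesis, and the quotient embeds in a finite symmetric group) correctly fills in the step the paper leaves implicit with ``Hence, it is finite.''
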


\begin{proof} We will show that the class of finite groups is inside $\N_1^{cell}(\mP_2)$, i.e. any finite group acts freely on a torus.

Suppose $G$ is a finite group. Let $g\in G$ and let $H=\langle g \rangle$. Construct a free and cellular
action of $H$ on $S^1$.
Let $X_g=\mbox{Map}_{H}(G, S^1)$ be the set of $H$-equivariant
set maps from $G$ to $S^1$ where $H$ acts on $G$ by left translation.  Let $g_1, g_2, \dots , g_n$ be coset representatives of $G/H$. There exists a bijection
$$\phi : X_g\rightarrow \prod_{i=1}^n S^1,$$
given by evaluating a map in $X_g$ at $g_i$ for each $1\leq i\leq n$. We now give $X_g$ the topology and CW-structure
coming from the product on the right hand side via the bijection $\phi$.
This structure is independent of the coset representatives. We define the action of $G$ on $X_g$
by $(x_0 f)(x)=f(xx_0)$ for $f\in X_g$, $x_0, x\in G$. Since $H$ acts freely on $X$,
the intersection of each stabilizer group of the action of $G$ on $X_g$ with $H$ is trivial.

Let $X= \prod_{g\in G} X_g$. Then $G$ act freely on $X$, because any stabilizer subgroup
is the intersection of the stabilizer subgroups of the actions of $G$ on $X_g$ for all $g\in G$.

It is left to show that any group in $\N^{cell}(\mP_2)$ is finite. But if  $G\in \N^{cell}(\mP_2)$,
again using transfinite induction, it follows that $G$ acts cellularly on a torus
with all stabilizer subgroups finite. Hence, it is finite.
\end{proof}

%\begin{lem} {\label{sc:5}} {\normalfont Suppose $G$ is a group that acts  cellularly
%    on a finite CW-complex $X$ with a finite isotropy group.
%Then $G$ is a finite group.}
%\end{lem}
%\begin{proof} Let $\sigma$ be a cell of $X$ such that $G_{\sigma}$ is finite. The orbit $G(\sigma)$ contains finitely many cells. Yet, it is in bijection with the coset $G/G_{\sigma}$. It follows immediately that $G$ is also finite.
%\end{proof}

We note that the property $\mP_2$ in the theorem can be weakened.
We can instead consider finite CW-complexes homotopy equivalent to a tori. The same
conclusion will still hold.

Our next result shows that the class of finite groups can also be obtained by replacing tori with spheres.

\begin{prop}{\label{sc:6}}{\normalfont Let $\mP_{3} = \{S^m| m\in \mathbb N\}$. Then, $\N^{cell}(\mP_{3})$ is the class of finite groups.}
\end{prop}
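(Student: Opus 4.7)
The statement splits into two inclusions, and each mirrors an argument already used in this section. For $\mathcal{F}\subseteq \N^{cell}(\mP_3)$, I would proceed by strong induction on $|G|$. The trivial group lies in $\N_0^{cell}(\mP_3)$, so assume $|G|>1$ and that the result holds for every proper subgroup. Let $V$ be the reduced real regular representation of $G$, namely $V=\R[G]/\R\cdot\sum_{g\in G}g$, so that $V^G=0$ and $G$ acts linearly, faithfully, and without global fixed point on the unit sphere $S(V)\cong S^{|G|-2}$. After choosing an equivariant triangulation of $S(V)$ and, if necessary, a barycentric subdivision, we obtain a cellular $G$-action on a sphere satisfying the standing convention that each cell stabilizer fixes its cell pointwise. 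Since $V^G=0$, every isotropy subgroup is a \emph{proper} subgroup of $G$, so by the induction hypothesis each isotropy subgroup lies in $\N^{cell}(\mP_3)$; hence $G\in\N^{cell}(\mP_3)$.

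For the reverse inclusion $\N^{cell}(\mP_3)\subseteq \mathcal{F}$, I would imitate the transfinite-induction step in the proof of Proposition~\ref{sc:4}. Suppose $G\in\N_\beta^{cell}(\mP_3)$ with $\beta$ minimal; the case $\beta=0$ is immediate. For $\beta>0$ the group $G$ acts cellularly on some $S^m$ with each isotropy subgroup lying in $\N_\alpha^{cell}(\mP_3)$ for some $\alpha<\beta$, and all such subgroups are finite by the inductive hypothesis. The crucial geometric observation is that $S^m$ is compact, so every CW-structure on it has only finitely many cells. The induced permutation representation $\rho\colon G\to\mathrm{Sym}(\{\text{cells}\})$ therefore has finite image; by the standing convention on cellular actions, $\ker\rho$ is contained in the pointwise stabilizer of any single cell and is thus finite as well. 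Consequently $G$ itself is finite, closing the induction.

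The main technical obstacle is concentrated in the first half: one must produce a $G$-CW-structure on the representation sphere $S(V)$ whose cell stabilizers fix their cells pointwise. This is handled by the classical equivariant triangulation theorem for smooth actions of a finite group on a compact manifold, possibly followed by a barycentric subdivision. Once this routine but essential step is available, the remainder of the proof is a direct application of the hierarchical definition of $\N^{cell}(\mP_3)$, with the compactness of $S^m$ doing the real work in the reverse direction.
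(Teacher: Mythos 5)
Your proof is correct and follows essentially the same route as the paper: the inclusion of finite groups is obtained via the reduced regular representation sphere $S^{|G|-2}$ (the paper's Lemma \ref{sc:7}) made into a $G$-CW-complex by Illman's equivariant triangulation theorem, with induction on the group order, while the reverse inclusion is the same transfinite induction using compactness of $S^m$. The only difference is that you spell out the finiteness argument (finitely many cells, finite kernel of the permutation representation) that the paper leaves implicit.
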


\begin{proof} By induction on the ordinals, it follows that any group in  $\N^{cell}(\mP_{3})$ is finite.

In the following lemma, we will show that a group $G$ of order $n$ acts on $S^{n-2}$ without fixed points. Illman's
theorem would then shows that $S^{n-2}$ has a $G$-invariant triangulation, and therefore a $G$-CW-complex structure (see \cite{illman}). By induction
on the order of the group, we can immediately conclude that $G$ is in $\N^{cell}(\mP_{3})$.
\end{proof}

\begin{lem}{\label{sc:7}}{\normalfont Let $G$ be a finite group. Then $G$ acts isometrically on $S^{|G|-2}$ without fixed points.}
\end{lem}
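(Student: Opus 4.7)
The plan is to exhibit the required action via the real regular representation of $G$. First, I would let $G$ act orthogonally on $\mathbb{R}^n = \mathbb{R}G$ by permuting an orthonormal basis $\{e_h\}_{h \in G}$ via left translation; this action preserves the standard inner product, so any $G$-invariant subspace automatically yields an isometric $G$-action on its unit sphere.

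Next, I would pass to the augmentation subrepresentation $V = \{\sum_h a_h e_h : \sum_h a_h = 0\}$, which is $G$-invariant of dimension $n-1$. The key observation is that the $G$-fixed subspace of $\mathbb{R}G$ is exactly the one-dimensional diagonal spanned by $\sum_h e_h$, because a vector fixed by left translation must have all coordinates equal. Intersecting this line with the augmentation ideal gives $V^G = 0$, and the unit sphere of $V$ is then a copy of $S^{n-2}$ on which $G$ acts isometrically with no global fixed point, since any such fixed point would be a non-zero element of $V^G$.

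I do not anticipate a substantive obstacle; the entire argument reduces to choosing the correct representation. The one subtlety worth flagging is that the phrase \emph{without fixed points} should be interpreted as $X^G = \emptyset$ (i.e.\ the absence of a common fixed point), which is precisely what the proof of Proposition \ref{sc:6} needs in order to force the isotropy subgroups on $S^{n-2}$ to be proper subgroups of $G$ and thus make the induction on $|G|$ go through.
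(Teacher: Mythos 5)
Your proposal is correct and follows essentially the same route as the paper: both pass to the augmentation hyperplane $V=\{\sum x_h h : \sum x_h = 0\}$ of the real regular representation $\mathbb{R}G$ and take the unit sphere $S^{|G|-2}$ in $V$. You in fact supply the justification for fixed-point-freeness (the $G$-fixed subspace of $\mathbb{R}G$ is the diagonal line, which meets $V$ trivially) that the paper's proof merely asserts, and your reading of \emph{without fixed points} as the absence of a global fixed point is the one the induction in Proposition \ref{sc:6} requires.
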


\begin{proof}Let $n=|G|$ and consider the group ring $\mathbb RG$. Define $$V= \{\sum^n_{i=1} x_i g_i | \sum^n_{i=1} x_i=0\}.$$ Then
$V$ is a hyperplane of dimension $n-1$.  The space of vectors in $V$ of unit length is then a sphere $S^{n-2}$. Note that $G$ acts
isometrically on $V$ and hence on $S^{n-2}$. The resulting action is fixed-point-free.
%
%Suppose $v\in S^{n-2}$ is a fixed point. Let $v=\sum^n_{i=1} x_i g_i$. We can deduce that $x_i$ are constant for all $1\leq i\leq n$. But,
%$\sum^n_{i=1} x_i=0$ showing that $x_i=0$ for all $1\leq i\leq n$ which is a contradiction.
\end{proof}

\begin{prop}{\label{sc:8}}{\normalfont Let $\mP_4 = \{S^1, \mathbb R\}$. Then, $\N^{cell}(\mP_4)$ is the class of polycyclic groups.}
\end{prop}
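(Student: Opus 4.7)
The plan is to establish the two inclusions, mirroring the argument for Proposition~\ref{sc:2}.

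For the inclusion of polycyclic groups into $\N^{cell}(\mP_4)$, I would induct on the length $n$ of a subnormal series for $G$ with cyclic factors. The case $n=0$ is the trivial group, which lies in $\N^{cell}_0(\mP_4)$. For the inductive step, choose a normal subgroup $H$ of length $n-1$ so that $G/H$ is cyclic: if it is finite then it acts freely and cellularly on $S^1$ (viewed as $\mathbb R/m\mathbb Z$), while if it is infinite then it acts freely and cellularly on $\mathbb R$ by integer translations. In either case $G/H\in\N^{cell}_1(\mP_4)$, and by induction $H\in\N^{cell}(\mP_4)$. Since the trivial class $\{1\}$ satisfies $\N^{cell}(\mP_4)\cdot\{1\}=\N^{cell}(\mP_4)$ vacuously, Lemma~\ref{sc:1} provides extension closure and hence $G\in\N^{cell}(\mP_4)$.

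For the converse, I would do transfinite induction on the least ordinal $\alpha$ with $G\in\N^{cell}_\alpha(\mP_4)$. The case $\alpha=0$ is trivial. For $\alpha>0$, $G$ acts cellularly on some $X\in\{S^1,\mathbb R\}$ with every isotropy group polycyclic by the inductive hypothesis. Let $K$ be the kernel of the action. The Section~3 convention that cell stabilizers fix their cells pointwise forces $K$ to equal the intersection of the cell stabilizers, and in particular $K$ is a subgroup of each polycyclic isotropy; subgroup closure of polycyclic groups yields $K$ polycyclic. The effective quotient $G/K$ then acts faithfully and cellularly on $X$. For $X=S^1$, Lemma~\ref{sc:3} embeds $G/K$ into a finite dihedral group, and for $X=\mathbb R$ I would prove the companion statement that any faithful cellular action on $\mathbb R$ factors through the infinite dihedral group $D_\infty$. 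In both cases $G/K$ is polycyclic, and since polycyclic groups are extension closed we conclude that $G$ is polycyclic.

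The main obstacle is this companion statement for $\mathbb R$. A CW-structure on $\mathbb R$ inducing its usual topology has a discrete $0$-skeleton $V$, and since the closed $1$-cells are compact intervals that together with $V$ cover $\mathbb R$, the set $V$ must be unbounded in both directions and can be written as $\{v_i\}_{i\in\mathbb Z}$ in increasing order. A cellular self-homeomorphism of $\mathbb R$ is monotone, sends $V$ to $V$, and sends consecutive vertices to consecutive vertices because the intervening $1$-cell is mapped to a $1$-cell; the induced action on the index set is therefore either a shift $i\mapsto i+k$ or a reflection $i\mapsto k-i$, giving an element of $D_\infty$. Conversely, the convention ensures that once the action on $V$ is specified the action on $\mathbb R$ is determined (if two homeomorphisms agree on $V$ then their composite stabilizes every cell and hence fixes $\mathbb R$ pointwise), so the resulting homomorphism from the group of cellular self-homeomorphisms to $D_\infty$ is injective. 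Since $D_\infty$ is polycyclic, the argument closes.
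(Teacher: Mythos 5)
Your proposal is correct and follows essentially the same route as the paper: extension closure (Lemma \ref{sc:1}) for the inclusion of polycyclic groups, and transfinite induction plus the classification of effective cellular actions on $S^1$ and $\mathbb R$ (Lemmas \ref{sc:3} and \ref{sc:9}) for the converse. The only differences are that you make explicit the reduction to the effective quotient $G/K$ (which the paper only states for $\mP_5$) and you supply a proof of Lemma \ref{sc:9}, which the paper asserts without proof; both additions are sound.
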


\begin{proof}
%We proceed as in the proof of Proposition \ref{sc:4} to show that the class of polycyclic groups is inside $\N(\mP_4)$.
%
%Suppose $G$ is a polycyclic group. We use induction on the length $n$ of the given decomposition
%of $G$ into cyclic factors.
%
% Let $H$ be a proper normal subgroup of length $n-1$. By induction on $n$, we can assume that $H\in \N^{cell}(\mP_4)$.
%Since the quotient $G/H$ is  cyclic, it acts freely and cellularly on $S^1$ or $\R$.
By extension closure, it follows that $\N^{cell}(\mP_4)$ contains all polycyclic group.

For the converse, suppose $G\in \N^{cell}(\mP_4)$. Then $G$ acts cellularly on $S^1$ or $\R$ such that each stabilizer subgroup is polycyclic.
If $G$ acts on $S^1$, then by Lemma \ref{sc:3}, it follows that $G$ is polycyclic. If $G$ acts on $\R$, the next lemma finishes the proof.
\end{proof}

\begin{lem}{\label{sc:9}}{\normalfont A group $G$  acts effectively and cellularly on $\R$ if and only if
$G$ is a subgroup of an infinite dihedral group.}
\end{lem}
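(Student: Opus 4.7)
The plan is to establish both implications of the biconditional.

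For the ``if'' direction, I would exhibit the natural action of $D_\infty=\langle t,s\mid s^2=1,\ sts=t^{-1}\rangle$ on $\R$ via $t\colon x\mapsto x+1$ and $s\colon x\mapsto -x$. Equip $\R$ with the standard CW-structure whose 0-cells are the integers and whose 1-cells are the open unit intervals $(n,n+1)$. This action is plainly cellular and effective, so every subgroup of $D_\infty$ inherits an effective cellular action on $\R$.

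For the harder ``only if'' direction, suppose $G$ acts effectively and cellularly on $\R$ with respect to some CW-structure. My first step is to pin down the combinatorics of the CW-structure: since the closure of every cell is compact and $\R$ is unbounded in both directions, the 0-skeleton must be a discrete, bi-infinite subset of $\R$ that I can index as $\{x_n\}_{n\in\Z}$ with $x_n<x_{n+1}$ and $x_n\to\pm\infty$, and the 1-cells are precisely the open intervals $(x_n,x_{n+1})$.

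Next I let $G^+\leq G$ be the index-at-most-two subgroup of orientation-preserving elements. Each $g\in G^+$ permutes the vertex set preserving its linear order, and since the only order-preserving bijections of $\Z$ are shifts, this gives $g(x_n)=x_{n+a(g)}$ for a unique $a(g)\in\Z$. The map $a\colon G^+\to\Z$ is a homomorphism. If $a(g)=0$, then $g$ fixes every vertex, hence fixes every edge setwise, hence pointwise by the standing convention of Section~3, so effectivity forces $g=1$. Thus $a$ embeds $G^+$ as a subgroup of $\Z$.

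Finally, any $g\in G\setminus G^+$ acts on the vertex set in an order-reversing fashion, so $g(x_n)=x_{-n+b(g)}$ for some $b(g)\in\Z$; then $g^2\in G^+$ fixes every vertex, hence $g^2=1$. A direct computation shows that for $h\in G^+$ the element $ghg^{-1}$ has translation number $-a(h)$. Consequently $G$ embeds into $G^+\rtimes_{-1}\Z/2\leq \Z\rtimes_{-1}\Z/2=D_\infty$, as required. The only slightly delicate point is the preliminary step of showing the 0-skeleton is forced to be order-isomorphic to $\Z$ rather than some more exotic discrete configuration; once that is in hand, the remaining rigidity of order-preserving and order-reversing bijections of $\Z$ makes the rest of the argument routine.
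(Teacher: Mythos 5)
Your proof is correct and complete. The paper states Lemma \ref{sc:9} without any proof (it is treated, like Lemma \ref{sc:3}, as an easy auxiliary fact), and your argument --- using compactness of closed cells to force the $0$-skeleton to be order-isomorphic to $\Z$, reading off a shift homomorphism $G^+\to\Z$ whose injectivity follows from effectivity together with the standing convention that cell stabilizers fix cells pointwise, and then adjoining an order-reversing involution to embed $G$ into $\Z\rtimes_{-1}\Z/2$ --- is precisely the standard argument the author evidently had in mind.
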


\begin{thm}{\label{sc:10}}{\normalfont Let $\mP_5 = \{S^m, \mathbb R | m\in \mathbb N\}$. Then, $\N^{cell}(\mP_5)$ is the class of virtually polycyclic groups.}
\end{thm}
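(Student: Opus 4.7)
The plan is to prove the two inclusions separately. For ``virtually polycyclic $\subseteq \N^{cell}(\mP_5)$'': by Proposition~\ref{sc:8}, every polycyclic group lies in $\N^{cell}(\mP_4) \subseteq \N^{cell}(\mP_5)$, and by Proposition~\ref{sc:6}, every finite group lies in $\N^{cell}(\mP_3) \subseteq \N^{cell}(\mP_5)$.  Since $\N^{cell}(\mP_5)$ is extension closed by Lemma~\ref{sc:1} applied with trivial $\mX$, and every virtually polycyclic group is an extension of a polycyclic normal subgroup by a finite quotient (take the normal core of a polycyclic subgroup of finite index), this inclusion is immediate.

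For the reverse inclusion, I would proceed by transfinite induction on the least ordinal $\alpha$ with $G \in \N_{\alpha}^{cell}(\mP_5)$; the base case $\alpha = 0$ is trivial.  In the inductive step, $G$ acts cellularly on some $X \in \mP_5$ with every isotropy group already known to be virtually polycyclic.  If $X = \mathbb R$, let $K$ denote the kernel of the action.  Then $K$ is contained in every cell stabilizer, so $K$ is virtually polycyclic by subgroup closure, while $G/K$ acts effectively and cellularly on $\mathbb R$ and hence embeds in the infinite dihedral group by Lemma~\ref{sc:9}, which is polycyclic.  Extension closure of the virtually polycyclic class then gives that $G$ is virtually polycyclic.

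If instead $X = S^m$, then the compactness of $S^m$ forces its $G$-CW structure to have only finitely many cells.  For any cell $\sigma$, the orbit $G\cdot\sigma$ lies in the finite set of cells of dimension $\dim\sigma$, so $[G : G_{\sigma}]$ is finite; combined with $G_\sigma$ virtually polycyclic from the inductive hypothesis, this gives $G$ virtually polycyclic.  What at first looks like the main obstacle --- analyzing arbitrary cellular actions on high-dimensional spheres, for which one might fear needing Smith-theoretic or Adem--Smith type input --- in fact evaporates: compactness of $S^m$ alone supplies finite-index isotropy groups, so the genuine content of the theorem is concentrated entirely in the $\mathbb R$-case and is handled by Lemma~\ref{sc:9}.
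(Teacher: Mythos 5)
Your proof is correct and takes essentially the same route as the paper: the inclusion of virtually polycyclic groups via $\{\mP_3,\mP_4\}\leq\mP_5$ together with extension closure, and the converse by transfinite induction reducing to Lemma \ref{sc:9} for $\mathbb{R}$. You are in fact more explicit than the paper on the sphere case, where the paper only invokes Lemma \ref{sc:9} (which concerns $\mathbb{R}$) and leaves the finite-orbit/compactness argument implicit.
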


\begin{proof}  Since $\{\mP_{3}, \mP_4\}\leq \mP_5$,
we have $\{\N^{cell}(\mP_{3}), \N^{cell}(\mP_4)\}\subseteq \N^{cell}(\mP_5)$. Then, $\N^{cell}(\mP_5)$ contains all finite, all polycyclic, and
hence all virtually polycyclic groups.

Conversely, suppose $G\in \N_{\beta}^{cell}(\mP_5)$ for some ordinal $\alpha > 0$. Proceeding by transfinite induction, we can assume that
 $G$ acts cellularly on $S^n$ for some $n\geq 1$ or on $\R$ with all stabilizer subgroups virtually polycyclic. Without loss of generality,
we can assume that the action is effective. Then, Lemma \ref{sc:9} finishes the proof.
\end{proof}

In our next theorem, we prove that there  is a large class of groups which is closed under
countable direct unions, HNN-extensions, and amalgamated products such that virtually polycyclic groups are the only groups inside this class
that are Noetherian.

\begin{thm}{\label{sc:11}}{\normalfont  Let $\mP_6 = \{X| X=S^m, m\in \mathbb N, \mbox{ or } X  \mbox{ is a locally finite tree}\}$.
Then $\N^{cell}(\mP_6)$ contains all countable elementary amenable groups and all countable locally free groups. Every group in $\N^{cell}(\mP_6)$
either contains a free subgroup on two generators or it is countable elementary amenable. In particular, every Noetherian group in $\N^{cell}(\mP_6)$ is    virtually polycyclic.}
\end{thm}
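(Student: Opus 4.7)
\emph{Containment.} Finite groups lie in $\N^{cell}(\mP_{3})\subseteq \N^{cell}(\mP_6)$ by Proposition \ref{sc:6}; virtually polycyclic groups lie in $\N^{cell}(\mP_5)\subseteq \N^{cell}(\mP_6)$ by Theorem \ref{sc:10}; and every finitely generated free group acts freely on its Cayley tree (a locally finite tree), so it belongs to $\N^{cell}_1(\mP_6)$. Extension closure is Lemma \ref{sc:1}. The remaining key closure I would establish is under countable directed unions: given $G=\bigcup_{n\in\mathbb{N}}G_n$ with each $G_n\in\N^{cell}(\mP_6)$, let $G$ act on the coset tree whose vertices are the pairs $(n,gG_n)$ and whose edges join $(n,gG_n)$ to $(n+1,gG_{n+1})$; the point stabilizers are conjugates of the $G_n$. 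Provided the chain is chosen so that each consecutive index $[G_{n+1}:G_n]$ is finite, this tree is locally finite and so $G\in\N^{cell}(\mP_6)$. Combining this construction with the hierarchical definitions of countable elementary amenable and countable locally free groups (the latter being countable ascending unions of finitely generated free subgroups) yields both containment claims.

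\emph{Tits alternative.} I proceed by transfinite induction on the least $\alpha$ with $G\in\N^{cell}_\alpha(\mP_6)$; the base $\alpha=0$ is trivial. In the inductive step, $G$ acts cellularly on some $X\in\mP_6$ with every cell stabilizer in $\N^{cell}_\beta(\mP_6)$ for some $\beta<\alpha$. Assume $G$ does not contain $F_2$; then by subgroup closure no cell stabilizer does either, so each cell stabilizer is countable elementary amenable by the inductive hypothesis. If $X=S^m$, compactness forces a finite CW-structure on $X$, so $G$ has finitely many cell-orbits; the kernel $K$ of the $G$-action on the cell set sits inside every cell stabilizer, hence is countable elementary amenable by subgroup closure within that class, and so is $G$ via the finite extension $1\to K\to G\to G/K\to 1$. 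If $X$ is a locally finite tree, the Tits alternative for tree actions produces a normal subgroup $K\trianglelefteq G$ all of whose finitely generated subgroups lie in a cell stabilizer (obtained immediately when $G$ fixes a vertex, and as the kernel of the Busemann homomorphism or of the line-translation homomorphism when $G$ fixes a unique end or stabilizes a unique line respectively), together with a quotient $G/K$ embedding into $\Z$ or the infinite dihedral group. Since elementary amenable groups are closed under directed unions and extensions, $K$ and hence $G$ are countable elementary amenable.

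\emph{Noetherian consequence.} The free group $F_2$ contains subgroups of infinite rank (for instance its commutator subgroup), so $F_2$ is not Noetherian; since subgroups of Noetherian groups are Noetherian, no Noetherian group contains $F_2$. By the Tits alternative, any Noetherian $G\in\N^{cell}(\mP_6)$ is countable elementary amenable. A finitely generated group cannot be a proper directed union of groups of strictly smaller elementary class, so the elementary amenable hierarchy forces such a $G$ to be built by a finite chain of extensions from finite and finitely generated abelian groups; that is, $G$ is virtually polycyclic.

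\emph{Main obstacle.} The central technical challenge lies in the first paragraph: the coset tree is locally finite only under the finite-index hypothesis on consecutive inclusions, so one must systematically rechoose the defining chain throughout the countable elementary amenable and locally free hierarchies to ensure this. The remainder of the argument is a transfinite induction combining subgroup, extension, and directed-union closure with the Tits alternative for group actions on trees.
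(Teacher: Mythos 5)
Your overall architecture is the same as the paper's. The containment is obtained from extension closure together with closure under countable directed unions, amalgamated products and HNN-extensions realized by actions on trees (this is the paper's Lemma \ref{sc:13}); the dichotomy is proved by transfinite induction, handling the sphere case via the finiteness of the cell set and the tree case via the Pays--Valette alternative (fixed vertex or edge, fixed end with its Busemann homomorphism, or invariant line), using that countable elementary amenable groups are closed under subgroups, extensions and countable directed unions; and the Noetherian statement reduces to the standard fact that a Noetherian elementary amenable group is virtually polycyclic. Those parts are essentially identical to the paper's proof and are fine.

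The genuine gap is precisely the point you flag as the ``main obstacle,'' and your proposed remedy cannot work. You need the coset tree (equivalently, the Bass--Serre tree of the ascending chain) to be locally finite, and you propose to rechoose the chain so that each index $[G_{n+1}:G_n]$ is finite. This is provably impossible in general: if $H\leq F$ are finitely generated free groups with $[F:H]=k<\infty$, then $\mathrm{rank}(H)=1+k(\mathrm{rank}(F)-1)\geq \mathrm{rank}(F)$, so along a finite-index ascending chain of finitely generated free groups the ranks are non-increasing, and the chain either stabilizes or consists eventually of infinite cyclic groups. Hence the free group of countably infinite rank --- one of the very groups the theorem asserts lies in $\N^{cell}(\mP_6)$ --- admits no such chain; similarly $\bigoplus_{\mathbb N}\Z$ among countable elementary amenable groups. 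So your first paragraph does not establish the containment as written. You should note, however, that the paper's own route through Lemma \ref{sc:13} (the universal covering tree of a graph of groups) is silent on exactly this local-finiteness requirement for $\mP_6$: the Bass--Serre tree of $A\ast_C B$ is locally finite only when $[A:C]$ and $[B:C]$ are finite. A correct treatment needs a different device at this step, for instance exhibiting explicit free cellular actions on locally finite graphs of infinite first Betti number in the locally free case, rather than rechoosing chains.
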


\begin{proof}
Since $\N^{cell}(\mP_6)$ contains all finite and all finitely generated abelian groups, Lemma \ref{sc:13} shows that
it also contains all countable elementary amenable groups and all countable locally free groups.

%Next, note that every finitely generated free groups is in $\N^{cell}(\mP_6)$, because every such group acts freely and cellularly on its Cayley graph which is a locally finite tree. It follows that this class contains all countable locally free groups.

Now, we suppose $G\in \N^{cell}_{\alpha}(\mP_6, \mX)$  for some ordinal $\alpha > 0$, does not contain a free subgroup on two generators.
We will show that $G$ is countable elementary amenable.

By transfinite induction, we can assume that $G$ acts cellularly on a CW-complex $X$, a sphere or a locally finite tree, where
 all stabilizer subgroups are countable elementary amenable.

Suppose $X$ is a sphere. Let $v$ be a $0$-cell of $X$. Clearly, $G/G_v$ is finite and hence, $G$ is countable elementary amenable.

Let $X$ be  a locally finite tree.  We can assume that the action of $G$ on $X$ is effective.
Since $G$ does not contain a free group on two generators, by a theorem of Pays and Valette (see \cite{PV}), $G$ either fixes a vertex, an edge, an end, or pair of ends. If $G$ fixes a vertex or an edge, then $G$ is countable elementary amenable by induction. If $G$ fixes a pair of ends then there exists a subgroup of index at most two fixing one of the two ends. Therefore, it is left to prove the case where $G$ fixes an end.

Suppose $\omega$ is an end fixed by $G$. Then for any $g\in G$ and  a ray $r\in \omega$,
$gr\in \omega$.

We recall a homomorphism $f:G \to \Z$ defined by $$f(g)= d(gx, x_0) - d (x,x_0),$$ where $x_0$ is a fixed vertex on a ray $r$ of $\omega$ and $x$ is a point on $r$ between $x_0$ and infinity such that $gx$ is also on $r$. One can easily check that the definition is independent of the choice of $x_0$ and $x$.

Let $K$ be the kernel of $f$. It is left to show that $K$ is countable elementary amenable. Let $r\in \omega$ and let $x_i$, $i\in \mathbb N$, be vertices of $r$ such that $x_{i+1}$ is between $x_i$ and infinity for each $i$. Let $$K_i=\{g\in K| g(x_j)=x_j, \mbox{ for each } j\geq i\}.$$ We have that for each $i$, $K_i$ is a group and $K_i\leq K_{i+1}$. It follows that $K= \cup_{i\in \mathbb N} K_i$. As $K_i$ is a subgroup of $G_{x_i}$ for each $i\in \mathbb N$, it is countable elementary amenable. Therefore, $K$ is countable elementary amenable.
%
%Suppose in addition that $G$ is Noetherian. We will show that it is virtually polycyclic. By induction, we can assume that all the stabilizers of the action of $G$ on $X$ are virtually polycyclic.
% If $X$ is a sphere, there is nothing to prove. In the other case, there exists $l\in \mathbb N$ such that $K=K_l$. This proves that
% $G$ is virtually polycyclic.
\end{proof}

%Let $\mathcal{T}$ be the subclass in $\N^{cell}(\mP_6, \mX)$ satisfying Tits alternative. Then $\mathcal{T}\supseteq \mX$ contains all groups in $\N^{cell}(\mP_6, \mX)$ satisfying maximal condition.

%\begin{remk}{\label{sc:12}}{\normalfont The hypotheses that the group satisfies maximal condition cannot be omitted. To see this,
%consider $G=\bigoplus_{i\in \mathbb N} A_5$. Clearly $G$ is neither virtually solvable nor it contains a free group isomorphic to $F_2$.
%Yet, $G$ acts on a locally finite tree with each isotropy group a finite sum of $A_5$.
%
%More explicitly, for $i=1$, let $A_5$ act by permutations on a tree with a single vertex and five edges. Denote this tree by $Y_1$. Proceeding %inductively, let $\bigoplus_{i=1}^{i=j} A_5$ act on a finite tree $Y_j$. Let $Y_{j+1}$ be the tree formed by connecting five copies of $Y_j$  to a %single vertex by five edges, respectively. Define the action of $\bigoplus_{i=1}^{i=j+1} A_5$ on $Y_{j+1}$ by letting $\bigoplus_{i=1}^{i=j} A_5$ act %on each copy of $Y_j$ as before and for $i=j+1$ letting $A_5$ act by permutations on
% the edges connecting $Y_j$-s to the vertex. Set $Y= \cup_{j\in \mathbb N} Y_j$ and let $G$ act on $Y$ as above.}
%\end{remk}
%
%\medskip

Next, we define a class with the most general property $\mP$ amongst all classes of groups
we have considered thus far.

\begin{defn}{\label{sc:12}}{\normalfont Suppose $R$ is an integral domain of characteristic zero. We say that a CW-complex $X$ belongs to $\mP_R$
whenever there exist $k\geq 0$ and  $m>0$ (both depending on $X$) such that
\begin{enumerate}[(a)]
\item $H_i(X)$ is $R$-torsion-free torsion group for each $i> k$,

\smallskip

\item $H_{k}(X)=\Z^m\oplus F$, where $F$ is an $R$-torsion-free finite group.
\end{enumerate}}
\end{defn}

\smallskip

We point out that when $R=\mathbb Q$, complexes that have finitely generated  homology groups, such as finitely dominated CW-complexes, satisfy these conditions. As we shall see in the last section, all groups in $\N^{cell}_1(\mP_R)$ will have jump cohomology over $R$.

Next, we make several observations.

%\begin{point}{\normalfont According to Adem and Smith Theorem (see 2.12), any group with periodic
%cohomology is inside $\N_1(\mP_4)$.}
%\end{point}

\begin{remk}{\normalfont  Since $\mP_1\leq \{\mP_2, \mP_3, \mP_4\}$ and $\{\mP_3, \mP_4\}\leq \mP_5\leq \mP_6\leq \mP_R$, for
each ordinal $\alpha$
 we have $$\N^{cell}_{\alpha}(\mP_1)\leq \{\N^{cell}_{\alpha}(\mP_2), \N^{cell}_{\alpha}(\mP_3), \N^{cell}_{\alpha}(\mP_4)\}$$ and $$\{\N^{cell}_{\alpha}(\mP_3), \N^{cell}_{\alpha}(\mP_4)\}\leq \N^{cell}_{\alpha}(\mP_5)\leq \N^{cell}_{\alpha}(\mP_6)\leq \N^{cell}_{\alpha}(\mP_R).$$}
\end{remk}

%\begin{point}{\normalfont If $k=0$ for each  $X$, then each $CW$-complex $X$ must be acyclic. If we denote
%this property by $\mathcal A$, then $\mathcal A\leq \mP_5$ and thus $\N(\mathcal A)\subseteq \N(\mP_5)$. But
%$\N(\mathcal A)$ is exactly the class of torsion-free $\sl H{\mathcal F}$-groups. }
%\end{point}

\begin{remk}{\normalfont Since $\N^{cell}(\mP_R)$ contains all finite groups and any finite dimensional
contractible complex is in  $\mP_R$, $\N^{cell}(\mP_R)$ contains the class of $\sl H{\mathcal F}$-groups. }
\end{remk}

\begin{lem}{\label{sc:13}}{\normalfont  $\N^{cell}(\mP_6)$ and $\N^{cell}(\mP_R)$ are closed under countable directed unions,
amalgamated products, and HNN-extensions.}
\end{lem}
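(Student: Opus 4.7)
The plan is to exhibit, in each of the three constructions, a cellular $G$-action on a space in $\mP_6$ (respectively $\mP_R$) whose isotropy subgroups lie in $\N^{cell}(\mP_6)$ (respectively $\N^{cell}(\mP_R)$); subgroup closure of these classes is guaranteed by Lemma~\ref{sc:1} since the trivial group is taken as $\mX$. The key reduction is that every connected tree $T$ (locally finite or not) has $\tilde H_*(T)=0$, so $T\in\mP_R$ via $k=0$, $m=1$, $F=0$; when $T$ is additionally locally finite, $T\in\mP_6$ as well. Hence it suffices in each case to produce an appropriate $G$-tree.

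For the amalgamated product $G=A*_C B$ with $A,B$ in the class, I would take the Bass--Serre tree, whose vertex stabilizers are conjugates of $A$ and $B$ and whose edge stabilizers are conjugates of $C$. Since $C\leq A$, subgroup closure places $C$ in the class, so all isotropy groups lie in the class. The HNN-extension case $G=A*_C$ is handled identically using the Bass--Serre tree of the HNN decomposition, whose vertex stabilizers are conjugates of $A$ and whose edge stabilizers are conjugates of $C$.

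For a countable directed union $G=\bigcup_{n\geq 0} G_n$ with $G_n\leq G_{n+1}$, I would build the \emph{coset tree} $T$ with vertex set $V(T)=\bigsqcup_{n\geq 0} G/G_n$ and an edge from $gG_n$ to $gG_{n+1}$ for each $g\in G$ and $n\geq 0$. Connectedness follows from $G=\bigcup G_n$: two cosets at any levels become equal after ascending to a sufficiently high level. Acyclicity is immediate because each vertex at level $n+1$ has a unique descending edge. The left-translation $G$-action is cellular, and the stabilizer of $gG_n$ is $gG_ng^{-1}$, which lies in the class.

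The main obstacle I expect concerns local finiteness in the $\N^{cell}(\mP_6)$ case. The Bass--Serre tree of $A*_C B$ has valence $[A:C]+[B:C]$ at each vertex, and the coset tree has out-degree $[G_{n+1}:G_n]$ at each level-$(n+1)$ vertex; in general these indices need not be finite. To deal with this, I would try either (i) to refine the chain $\{G_n\}$ (or to choose $C$ inside $A$ and $B$) in the applications at hand so that these indices are forced to be finite, so that Bass--Serre produces a locally finite $G$-tree, or (ii) to replace the offending high-valence vertex stars by an equivariant sphere action realizing the ``excess'' coset set, which costs only an increase of the ordinal $\alpha$ in the stratification $\N_\alpha^{cell}(\mP_6)$ but still places $G$ in $\N^{cell}(\mP_6)$.
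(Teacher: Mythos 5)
Your construction is the paper's: the proof there observes that a countable directed union, an amalgamated product, or an HNN-extension of groups in the class is the fundamental group of a graph of groups with vertex groups in the class, and takes $X$ to be the universal covering (Bass--Serre) tree, a $1$-dimensional contractible $G$-CW-complex whose cell stabilizers are conjugates of the vertex and edge groups. Your explicit coset tree for a directed union is exactly this tree for the ray of groups $G_0\leq G_1\leq\cdots$, and your reduction of the edge groups to the class via subgroup closure (Lemma \ref{sc:1}) is the same as the paper's implicit one. For $\N^{cell}(\mP_R)$ the argument is complete as you give it, since any finite dimensional contractible complex lies in $\mP_R$ (take $k=0$, $m=1$, $F=0$), as the paper also notes.

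The local finiteness obstacle you flag for $\N^{cell}(\mP_6)$ is genuine, and it is worth saying plainly that the paper's own proof does not address it: it asserts that the universal covering tree does the job, yet that tree has valence $[A:C]+[B:C]$ at a vertex of $A*_CB$, and $1+[G_{n+1}:G_n]$ at a level-$(n{+}1)$ vertex of the coset tree, which are infinite in general, while $\mP_6$ admits only locally finite trees. Neither of your proposed repairs closes this. For (i), one cannot in general force the indices to be finite: the lemma is invoked in Theorem \ref{sc:11} precisely for countable locally free groups, i.e.\ unions of finitely generated free groups $F_n\leq F_{n+1}$ where $[F_{n+1}:F_n]$ is typically infinite. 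For (ii), a sphere is compact, so any CW structure on it has finitely many cells and every cell orbit is finite; an infinite ``excess'' coset set therefore cannot be realized equivariantly inside a member of $\mP_6$ other than a locally finite tree, and raising the ordinal $\alpha$ does not help. So for the $\mP_6$ half your write-up has the same unresolved step as the paper's proof; identifying it is to your credit, but as stated neither (i) nor (ii) repairs it, and only the $\mP_R$ half (together with the cases of finite index, where the Bass--Serre tree genuinely is locally finite) is fully proved.
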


\begin{proof}  Let $\mathcal{C}$ denote either $\N^{cell}(\mP_6)$ or $\N^{cell}(\mP_R)$.
Any group that is either a countable directed union, an
amalgamated product, or HNN-extension of groups in $\mathcal{C}$ is a fundamental group of a graph of groups with
vertex groups in $\mathcal{C}$. Let $G$ be such a group for a graph of groups $Y$. Let $X$ be the universal covering tree of $Y$. Then $X$ is a 1-dimensional contractible $G$-CW-complex
such that each stabilizer subgroup is in $\mathcal{C}$. It follows that $G\in \mathcal{C}$.
\end{proof}

The following result shows that a property similar to $\mP_R$ induces the class of finite groups.

\begin{prop}{\label{sc:14}}{\normalfont  Let $\mathcal B$ be the set of CW-complexes where
$X\in \mathcal B$ if and only if there exists $m\in \mathbb N$ (depending on $X$) such that
$H_k(X)= \Z^m$, $k=\mbox{dim}(X)$.  Then $\N^{cell}(\mathcal B)=\N^{cell}_1(\mathcal B)$
is the class of finite groups.}
\end{prop}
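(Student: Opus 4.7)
The plan is to prove both $\N^{cell}_1(\mathcal B) \supseteq \mathcal{F}$ and $\N^{cell}(\mathcal B) \subseteq \mathcal{F}$; combined with the obvious inclusion $\N^{cell}_1(\mathcal B) \subseteq \N^{cell}(\mathcal B)$, these give the proposition. For the first, I would reuse the construction of Proposition~\ref{sc:4}: any finite group $G$ acts freely and cellularly on a product torus $\mathbb{T}^N$, and since $\mathbb{T}^N$ has $\dim = N$ and $H_N(\mathbb{T}^N) = \Z$, it lies in $\mathcal B$. Hence $G \in \N^{cell}_1(\mathcal B)$.

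For the reverse inclusion, I would use transfinite induction on $\beta$ to show that every group in $\N^{cell}_\beta(\mathcal B)$ is finite. The case $\beta = 0$ is trivial. For $\beta > 0$, suppose $G \in \N^{cell}_\beta(\mathcal B)$ acts cellularly on some $X \in \mathcal B$ with all stabilizers belonging to stages $\alpha < \beta$, hence finite by induction. Let $k = \dim X$ and write $H_k(X) = \Z^m$ with $m \geq 1$. Assume for contradiction that $G$ is infinite; then every $G$-orbit of a $k$-cell is infinite, because otherwise the stabilizer would be a finite subgroup of finite index in the infinite group $G$.

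The crux is an analysis of the top chain group. Since $k = \dim X$, no boundaries occur in top degree, so $H_k(X) = \ker \partial_k \subseteq C_k(X)$. The standing convention on cellular actions yields $C_k(X;\mathbb{Q}) = \bigoplus_\lambda \mathbb{Q}[G/G_{\sigma_\lambda}]$ as $\mathbb{Q} G$-modules, where $\lambda$ ranges over $G$-orbits of $k$-cells and each $|G/G_{\sigma_\lambda}|$ is infinite. Thus $H_k(X;\mathbb{Q}) \cong \mathbb{Q}^m$ embeds into this direct sum as a finite-dimensional $\mathbb{Q} G$-submodule. I would then prove the following support lemma: if $Y$ is a $G$-set every $G$-orbit of which is infinite, then $\mathbb{Q}[Y]$ contains no nonzero finite-dimensional $\mathbb{Q} G$-submodule. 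Given such a submodule $M$, its total support $F = \bigcup_{v \in M}\mathrm{supp}(v)$ is finite (it equals the union of the supports of a $\mathbb{Q}$-basis of $M$). The identity $\mathrm{supp}(gv) = g\cdot \mathrm{supp}(v) \subseteq F$ then forces $gF \subseteq F$ for every $g \in G$, so $F$ is a finite $G$-invariant subset of $Y$; but every $G$-orbit in $Y$ is infinite, so $F = \emptyset$ and $M = 0$.

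Applying this lemma to $Y = \coprod_\lambda G/G_{\sigma_\lambda}$ forces $H_k(X;\mathbb{Q}) = 0$, contradicting $m \geq 1$. This completes the induction and shows $\N^{cell}(\mathcal B) \subseteq \mathcal{F}$; together with the first paragraph we obtain $\N^{cell}_1(\mathcal B) = \N^{cell}(\mathcal B) = \mathcal{F}$. I expect the main obstacle to be the support lemma, though once one works rationally and exploits finite-dimensionality to uniformly bound supports, the argument reduces to the elementary fact that a finite $G$-invariant subset of a disjoint union of infinite $G$-orbits must be empty.
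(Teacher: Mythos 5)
Your proposal is correct and follows essentially the same route as the paper: the paper's Lemma \ref{sc:15} likewise observes that the finitely many top-dimensional cells supporting the generators of $Z_n(X)=H_n(X)$ form a finite $G$-invariant set on which $G$ acts with finite stabilizers, which is exactly your support lemma read in the contrapositive. The only cosmetic differences are that you pass to $\mathbb{Q}$-coefficients and argue by contradiction, whereas the paper works integrally and concludes the finiteness of $G$ directly.
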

\begin{proof}  $\mP_2\leq \mathcal B$ implies $\N^{cell}_1(\mP_2)\subseteq \N^{cell}_1(\mathcal B)$.

Conversely, we use induction on the ordinals to show that every group in $\N^{cell}_{\beta}(\mathcal B)$ is finite for each $\beta$.

Let $G\in \N^{cell}_{\beta}(\mathcal B)$. Then $G$ acts on $X$ with property $\mathcal B$ such that for each cell $\sigma$
there exists an ordinal $\alpha < \beta$ such that the stabilizer subgroup $G_{\sigma}$ is finite.
The following lemma finishes the proof.
\end{proof}

\begin{lem} {\label{sc:15}}{\normalfont Suppose $X$ is an $n$-dimensional $G$-CW-complex with $H_n(X)=\Z^m\oplus F$ for some positive integer $m$
and a finite group $F$ such that each stabilizer subgroup is finite. Then, $F=0$ and $G$ is finite.}
\end{lem}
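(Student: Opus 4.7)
The plan is to read everything off the cellular chain complex of $X$. Since $X$ is $n$-dimensional, $C_{n+1}(X)=0$, so $H_n(X)=\ker\partial_n$ sits as a subgroup of the free abelian group $C_n(X)$ and is therefore itself free abelian. This forces $F=0$, and we are reduced to showing that, under the stated hypotheses, $H_n(X)\cong \Z^m$ with $m>0$ implies $G$ is finite.

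For finiteness, I would use finite generation of $H_n(X)$ to localize the action to a finite set of cells. Pick a $\Z$-basis $\alpha_1,\dots,\alpha_m$ of $H_n(X)$, express each $\alpha_i$ as a finite integer combination of $n$-cells, and let $S$ be the (finite) union of their supports. The standing convention of Section 3 that cell stabilizers fix cells pointwise lets us choose orientations of the $n$-cells $G$-equivariantly; consequently $G$ acts on $C_n(X)$ by permutation of the cellular basis, and $\mathrm{supp}(g\cdot \alpha_i)=g\cdot \mathrm{supp}(\alpha_i)$. On the other hand, because $H_n$ is a functorial invariant the $G$-action preserves $H_n(X)\subseteq C_n(X)$, so each $g\alpha_i$ is a $\Z$-linear combination of $\alpha_1,\dots,\alpha_m$ and therefore $\mathrm{supp}(g\alpha_i)\subseteq S$. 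Comparing the two descriptions yields $g\cdot \mathrm{supp}(\alpha_i)\subseteq S$ for every $g\in G$ and every $i$.

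Since $m>0$, some $\alpha_i$ has non-empty support; pick $\sigma\in \mathrm{supp}(\alpha_i)$. The whole $G$-orbit $G\cdot\sigma$ lies in the finite set $S$, and the stabilizer $G_\sigma$ is finite by hypothesis, so orbit-stabilizer gives $|G|=|G_\sigma|\cdot|G\cdot\sigma|<\infty$. The one delicate point I anticipate is the orientation bookkeeping used to identify $\mathrm{supp}(g\alpha_i)$ with $g\cdot\mathrm{supp}(\alpha_i)$; this is exactly what the pointwise-stabilizer convention is designed to handle, so no further work should be required.
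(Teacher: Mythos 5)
Your argument is correct and follows essentially the same route as the paper: identify $H_n(X)$ with $Z_n(X)\subseteq C_n(X)$ to kill the torsion summand $F$, then observe that the finite union of supports of a generating set of cycles is a $G$-invariant finite set of $n$-cells, so finiteness of $G$ follows from finiteness of the stabilizers. Your extra care with equivariant orientations is a reasonable refinement of the paper's "G acts on $Y$ by permuting the $n$-cells," but it is not a different proof.
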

\begin{proof} Suppose $F\ne 0$. Then, there exists a cycle $\tau\in Z_n(X)=H_n(X)$ such that $[\tau]\in F$, $[\tau]\ne 0$. Since $|F|\cdot\tau=0$,
we have $\tau=0$ which is a contradiction.

For each $1\leq i \leq m$, let $$\tau_i = \displaystyle{\sum_{j={i_1}}^{i_{p_i}} k_{ij} \sigma_{ij}}$$ be a generator of the $i$th-factor of $Z_n(X)=\Z^m$,
 where each $k_{ij}$ is a positive integer and $\sigma_{ij}$ is an $n$-cells of $X$.
Consider the subspace $Y$ of $X$ formed by the $n$-cells defining these cycles, $$Y=\displaystyle{\bigcup_{1\leq i \leq m, 1\leq j \leq {p_i}} \sigma_{ij}}.$$ The action of $G$ on $X$ induces an action on $Z_n(X)=\Z^m$. Therefore, $G$ acts on $Y$ by permuting the $n$-cells.
Since the stabilizers are finite, this implies that $G$ must also be finite.
% It is either trivial or an element of $G$ sends the generator $1$ to $-1$. In the latter case, $G$ surjects onto $\Z_2$ and we can reduce the problem to an index two subgroup of $G$. So, without loss of generality, we can assume that $G$ act
%trivially on $Z_n(X)$.
%
%Now, let $\tau\in Z_n(X)$ be a cycle such that $[\tau]=1\in \Z$. Let $\tau = \displaystyle{\sum_{i=1}^j k_i \sigma_i}$, where each $k_i$ is a
%positive integer and $\sigma_i$ is an $n$-cells of $X$. We claim that the action of $G$ on $X$, permutes the cells $\sigma_i$. This is easy to see,
%since for each $g\in G$, $g(\tau)=\tau$.
%Consider the cell $\sigma_1$. There is bijection between the elements of the coset $G/G_{\sigma_1}$ and the cells of the orbit space $G(\sigma_1)$. Since, both $G_{\sigma_1}$ and $G(\sigma_1)$ are finite, it follows that $G$ is also a finite group.
\end{proof}
%
%This next corollary shows that the property $\mP_R$ can be strengthened without changing the resulting class of groups.
%
%\begin{prop}{\label{sc:16}}{\normalfont Fix a positive integer $n$. Let $\mathcal C_n=\{X| X\in \mP_R \mbox{ and } $X$ \mbox{ is }n\mbox{-connected}\}$. Then,  $\N^{cell}(\mathcal C_n)=\N^{cell}(\mP_R)$.}
%\end{prop}
%\begin{proof} We will use transfinite induction on the ordinals to show that $\N^{cell}_{\beta}(\mP_R)\subseteq \N^{cell}(\mathcal C_n)$ for each $\beta$.
%
%Suppose $G\in \N^{cell}_{\beta}(\mP_R)$. Then $G$ acts on $X$ with property $\mP_R$ such that for each cell $\sigma$
%there exists an ordinal $\alpha < \beta$ such that the stabilizer subgroup $G_{\sigma}$ is inside $\N^{cell}_{\alpha}(\mP_R)$.
%By induction, each $G_{\sigma}\in \N^{cell}(\mathcal C_n)$. Let $Y=\displaystyle{\ast_{i=1}^{n+1} X}$ be the $n$-fold joint of $X$.
% It is an $n$-connected $G$-CW-complex in $\mP_R$. Note that the stabilizer subgroups of each cell of $Y$ are subgroups
% of the stabilizer subgroups of the action of $G$ on $X$. Therefore, they are all in $\N^{cell}(\mathcal C_n)$. This proves that $G$ is in
%  $\N^{cell}(\mathcal C_n)$.
%\end{proof}

\section{Jump (Co)homology}

In \cite{nans}, we considered a new (co)homological condition for groups
called jump (co)homology. In this section, we take a closer look at
groups with this property.

%2.4
\begin{defn}{\normalfont Let $R$ be a commutative ring with a unit. A discrete group $G$ has
{\it jump (co)homology over $R$} if there exists an
integer $k\geq 0$, such that for each subgroup $H$ of $G$ we have
$hd_R(H)=\infty$ ($cd_R(H)=\infty$) or $hd_R(H)\leq k$
($cd_R(H)\leq k$).  The smallest of all such $k$ will
 be called {\it jump height} and denoted $hjh_R(G)$ ($cjh_R(G)$). }
\end{defn}

\begin{remk}{\normalfont When $R=\Z$, we will simply say that $G$ has jump (co)homology with jump height $hjh(G)$ ($cjh(G)$).}
\end{remk}

It follows directly that if a group $G$ has finite virtual (co)homological
dimension over $R$, then it must have jump (co)homology of height $vhd_R(G)$ ($vcd_R(G)$) over $R$.

The converse is clearly not true, as it is evident from the example of the group $\mathbb Q/\Z$. This is an infinite
 torsion group which has jump (co)homology of height zero, but it does not have finite virtual
 (co)homological dimension.

% We conclude that the class of groups with jump (co)homology is larger than the class of
%groups with finite virtual (co)homological dimension.

In the next proposition,  we show that groups with jump (co)homology satisfy relations similar to
those of groups with finite (co)homological dimension. Whenever the proofs of homological and cohomological
parts are analogous, we only prove one and omit the proof of the other.

\begin{prop}{\label{sc:3.1}}{\normalfont Let $G$ be a group and let $R$ be a commutative ring with a unit.
\begin{enumerate}
\item If $G$ has jump (co)homology over $R$, then every subgroup $S<G$
has jump (co)homology over $R$ with $hjh_R(S)\leq hjh_R(G)$ ($cjh_R(S)\leq cjh_R(G)$).

\vspace{1mm}

\item If $G$ has jump (co)homology  of height zero over $R$, then $G$ is a torsion group. Conversely,
 if $G$ is an $R$-torsion group, then $G$ has jump (co)homology of height zero over $R$.

\vspace{1mm}

\item If $G$ has jump cohomology over $R$, then $G$ has jump homology over $R$ and $hjh_R(G)\leq cjh_R(G)$. If $G$ is countable, then
$G$ has jump cohomology over $R$ if and only if $G$ has jump homology over $R$ and $hjh_R(G)\leq cjh_R(G)\leq hjh_R(G)+1$.
\end{enumerate}}
\end{prop}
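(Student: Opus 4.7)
The plan is to prove the three parts in order, relying only on standard facts about $hd_R$ and $cd_R$ together with the subgroup structure of the ambient group. For part (1), I proceed directly from the definition: every subgroup $H$ of $S$ is also a subgroup of $G$, so the dichotomy ``$hd_R(H)=\infty$ or $hd_R(H)\le hjh_R(G)$'' automatically holds for $H$. Thus $S$ has jump homology with $hjh_R(S)\le hjh_R(G)$, and the argument for the cohomological version is identical.

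For part (2), I first treat the forward direction by specializing the dichotomy at an arbitrary cyclic subgroup $\langle g\rangle\le G$: either $hd_R(\langle g\rangle)=\infty$ or $hd_R(\langle g\rangle)=0$. Since the infinite cyclic group satisfies $hd_R(\Z)=1$ over any nonzero ring, neither alternative is compatible with $g$ having infinite order; hence $G$ is torsion. For the converse, the plan is to note that every subgroup of an $R$-torsion group is again $R$-torsion, and to verify $hd_R(H)\in\{0,\infty\}$ for every such $H$. This uses that any finite subgroup whose order is a unit in $R$ has $cd_R=0$, together with compatibility of homology with the directed union of finitely generated subgroups, following the pattern in \cite{nans}.

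For part (3), the key inputs are the classical comparison $hd_R(H)\le cd_R(H)$, the countable bound $cd_R(H)\le hd_R(H)+1$, and the fact that homology commutes with directed limits, so that $hd_R(H)=\sup_\alpha hd_R(H_\alpha)$ whenever $H$ is the directed union of finitely generated subgroups $H_\alpha$. Setting $k=cjh_R(G)$, suppose for contradiction that some subgroup $H\le G$ satisfies $k<hd_R(H)<\infty$. Then some finitely generated, hence countable, subgroup $H_0\le H$ has $hd_R(H_0)>k$, and the countable bound gives $k<cd_R(H_0)\le hd_R(H_0)+1<\infty$, contradicting jump cohomology of $G$. This yields $hjh_R(G)\le cjh_R(G)$. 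For the countable converse, applying $cd_R(H)\le hd_R(H)+1$ uniformly to all subgroups $H\le G$ shows that jump homology implies jump cohomology with $cjh_R(G)\le hjh_R(G)+1$.

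The main obstacle I expect is the converse of (2): for subgroups that are torsion but not locally finite, showing $hd_R\in\{0,\infty\}$ is genuinely delicate, and I would invoke the analysis of \cite{nans} rather than reprove it. Everything else reduces to the standard comparison theorems between $hd_R$ and $cd_R$ combined with the exactness of direct limits in homology.
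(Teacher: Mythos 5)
Parts (1) and (3), and the forward direction of (2), are correct and essentially the same as the paper's argument: (1) is immediate from the definition, the forward half of (2) rests on $hd_R(\Z)=1$, and (3) combines the comparison $hd_R(H)\le cd_R(H)\le hd_R(H)+1$ for countable groups with the passage to a finitely generated subgroup realizing (or exceeding) a given finite homological dimension via direct limits. Your contradiction argument in (3) is a minor repackaging of the paper's direct estimate $hd_R(H)=hd_R(H')\le cd_R(H')\le cjh_R(G)$.

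The genuine gap is in the converse of (2). In this paper an $R$-torsion group is a torsion group in which the order of every nontrivial element is \emph{not} invertible in $R$ (this is forced by Remark \ref{sc:3.2}, and by the cohomological half of the claim: under the opposite reading, $\mathbb{Q}/\Z$ over $R=\mathbb{Q}$ would be a counterexample, being locally finite with all finite subgroups of $cd_{\mathbb{Q}}=0$ yet $cd_{\mathbb{Q}}(\mathbb{Q}/\Z)=1$). With that definition the statement is a one-liner and none of your machinery is needed: any nontrivial subgroup $H\le G$ contains a nontrivial finite cyclic subgroup $C$ with $|C|\cdot 1_R$ not invertible, so $H^{2i}(C,R)\cong R/|C|R\ne 0$ for all $i>0$, whence $cd_R(C)=hd_R(C)=\infty$ and, by monotonicity of (co)homological dimension under subgroups, $cd_R(H)=hd_R(H)=\infty$; the trivial subgroup has dimension $0$, so the jump height is $0$. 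Your proposed route fails on three counts: the fact you invoke (finite subgroups of invertible order have $cd_R=0$) concerns subgroups that an $R$-torsion group does not possess; the direct-limit step controls only $hd_R$, not $cd_R$, so it cannot deliver the cohomological dichotomy; and the ``delicate'' non-locally-finite case you defer wholesale to \cite{nans} is precisely where your argument has no content --- once the definition is unwound that case evaporates, but as written your proof of the converse of (2) is incomplete.
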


\begin{proof}

\smallskip

\noindent {\it(1):} By definition, jump cohomology is a subgroup closed property.

\smallskip

\noindent {\it(2):}
 The first part of the claim follows from the fact that an infinite cyclic group has (co)homological dimension one over $R$.

 Now, let $G$ be an $R$-torsion group and let $H$ be a nontrivial subgroup generated by a single element of $G$. By assumption, $|H| \cdot 1_R$ is not
 invertible in $R$. Therefore, $cd_R(H)=hd_R(H)=\infty$. This shows that any nontrivial subgroup of $G$ has infinite (co)homological dimension over $R$.

\smallskip

\noindent {\it(3):}  Let $G$ be countable and let $H<G$. Then, by Theorem 4.6 of \cite{bieri}, we have
$hd_R(H)\leq cd_R(H)\leq hd_R(H)+1$. This proves $hjh_R(G)\leq cjh_R(G)\leq hjh_R(G)+1$.

In general, suppose $G$ has jump cohomology over $R$. Let $H<G$ so that $hd_R(H)<\infty$. Since
homology of groups commutes with direct limits, we can find a finitely generated subgroup $H'< H$ such that $hd_R(H')=hd_R(H)$.
Then, $hd_R(H')\leq cd_R(H')\leq hd_R(H')+1$ implies $hd_R(H)\leq cjh_R(G)$.
\end{proof}

\begin{remk}{\label{sc:3.2}}{\normalfont Note that over the integers, part $(2)$ of the proposition states that a group
is torsion if and only if it has jump (co)homology of height zero.

More generally, a finite group $G$ has (co)homological dimension zero only if it is $R$-torsion-free. If $G$ contains
 $R$-torsion, then $cd_R(G)=hd_R(G)=\infty$. This implies that every finite group has jump (co)homology of height zero over $R$.
 It follows that every locally finite group has jump homology of height zero over $R$.

So, we see that relaxing the definition of (co)homological dimension of a group allows us to consider groups with large torsion subgroups.}
\end{remk}

\begin{prop}{\label{sc:3.3}}{\normalfont Let $G$ be a group and let $R$ be a commutative ring with a unit.
\begin{enumerate}

\item Let $G$ be a direct limit of (countable) groups $G_i$, $i\in I$, where
each $G_i$ has jump (co)homology over $R$. If  $k=\mbox{sup}\{hjh_R(G_i)\}<\infty$ $(k=\mbox{sup}\{cjh_R(G_i)\}<\infty),$
then $G$ has jump (co)homology of height $k$ ($k$ or $k+1$) over $R$.

\vspace{1mm}

\item If a (countable) group $G$ has jump (co)homology over $R$, then  there
exists a finitely generated subgroup $S<G$ with jump (co)homology over $R$ with $hjh_R(S)=hjh_R(G)$
($cjh_R(G)\leq cjh_R(S)\leq cjh_R(G)+1$).

%\vspace{1mm}
%
%\item Let $G$ be the fundamental group of a graph of groups $Y$  whose vertex groups have
%jump (co)homology over $R$. If $$k=\mbox{sup}\{hjh_R(G_v)|v \mbox{ is a vertex of } Y\}<\infty$$ $$(k=\mbox{sup}\{cjh_R(G_v)|v \mbox{ is a vertex of } Y\}<\infty),$$ then $G$ has jump (co)homology of height at most $k+1$ over $R$.

\end{enumerate}}
\end{prop}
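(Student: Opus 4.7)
The plan is to handle the homological statements in both parts by exploiting the commutation of group homology with directed colimits, and then to deduce the cohomological statements from the countable-group comparison $hd_R\leq cd_R\leq hd_R+1$ of Proposition \ref{sc:3.1}(3).

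For part (1), given any subgroup $H<G$, I would write $H$ as the directed union $H=\varinjlim H_i$ where $H_i=H\cap\mathrm{im}(G_i\to G)$, so that each $H_i$ is naturally a subgroup of some $G_i$. The hypothesis $hjh_R(G_i)\leq k$ then gives the dichotomy $hd_R(H_i)\leq k$ or $hd_R(H_i)=\infty$. In the first alternative for every $i$ the identity $H_n(H,M)=\varinjlim H_n(H_i,M)$ for any coefficient module $M$ yields $H_n(H,M)=0$ for $n>k$, hence $hd_R(H)\leq k$; in the second alternative for some $i$, subgroup monotonicity of homological dimension forces $hd_R(H)=\infty$. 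Thus $hjh_R(G)\leq k$, while the reverse inequality follows from Proposition \ref{sc:3.1}(1) applied to each $G_i<G$. When $G$ is countable, the sandwich $hjh_R(G)\leq cjh_R(G)\leq hjh_R(G)+1$ places $cjh_R(G)$ in $\{k,k+1\}$.

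For part (2) and the homological claim, set $k=hjh_R(G)$. By the minimality in the definition of jump height there exists $H<G$ with $hd_R(H)=k$. Writing $H$ as the directed union of its finitely generated subgroups and again using that homology commutes with direct limits, we extract a finitely generated $S<H$ with $hd_R(S)=k$. Proposition \ref{sc:3.1}(1) gives $hjh_R(S)\leq k$, while $S$ itself realizes $hd_R(S)=k$, so $hjh_R(S)\geq k$, proving $hjh_R(S)=hjh_R(G)$.

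For the cohomology claim in (2) with $G$ countable, the same $S$ is finitely generated and hence countable, so Proposition \ref{sc:3.1}(3) gives $cjh_R(S)\leq hjh_R(S)+1=hjh_R(G)+1\leq cjh_R(G)+1$, which is the upper bound. I expect the lower bound $cjh_R(G)\leq cjh_R(S)$ to be the main obstacle, since cohomology does not commute with directed colimits and so there is no direct homological-style extraction at the level of $cd_R$. My approach is to start from a subgroup $H'<G$ realizing $cd_R(H')=cjh_R(G)$, apply the homological extraction inside $H'$ (noting $hd_R(H')\geq cjh_R(G)-1$ for countable $H'$ by Proposition \ref{sc:3.1}(3)), and then use the countable-group comparison to propagate this lower bound from $hjh_R$ to $cjh_R$ on the resulting finitely generated witness.
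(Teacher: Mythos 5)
Your treatment of part (1) is essentially the paper's: the homological half is word for word the intended argument (intersect a subgroup $H$ with the $G_i$, use that group homology commutes with directed colimits, and recover the reverse inequality from subgroup monotonicity of jump height via Proposition \ref{sc:3.1}(1)), and for the cohomological half you route through the countable-group sandwich $hjh_R\leq cjh_R\leq hjh_R+1$ of Proposition \ref{sc:3.1}(3), where the paper instead applies Berstein's theorem ($cd_R(H)\leq \sup\{cd_R(H_i)\}+1$, Thm.\ 4.7 of \cite{bieri}) directly to each subgroup of finite cohomological dimension. Since the sandwich is itself a consequence of Berstein's theorem and both versions need the countability hypothesis, this difference is cosmetic. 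Your homological argument for part (2) is also the intended one; the paper compresses all of (2) into ``a direct application of (1)'', meaning: apply (1) to $G$ written as the directed union of its finitely generated subgroups and observe that the supremum of their jump heights is attained.

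Where you hesitate you are right to, but the diagnosis should be sharper: the lower bound $cjh_R(G)\leq cjh_R(S)$ in the cohomological half of (2) is not a hard step you have left open --- it is false as stated. By Proposition \ref{sc:3.1}(1) one always has $cjh_R(S)\leq cjh_R(G)$ for $S<G$, so the displayed inequality would force $cjh_R(S)=cjh_R(G)$ for some finitely generated $S$; but for $G=\mathbb Q$ one has $cjh(\mathbb Q)=cd(\mathbb Q)=2$ (as the paper itself records in Remark \ref{sc:3.5}), while every finitely generated subgroup of $\mathbb Q$ is cyclic and hence has jump cohomology of height at most $1$. What your strategy (and the paper's appeal to (1)) actually yields is the reversed sandwich: choosing $H'<G$ with $cd_R(H')=cjh_R(G)$ and extracting a finitely generated $S<H'$ with $hd_R(S)=hd_R(H')\geq cd_R(H')-1$ gives $cjh_R(S)\geq cd_R(S)\geq hd_R(S)\geq cjh_R(G)-1$, and no more; equivalently $cjh_R(S)\leq cjh_R(G)\leq cjh_R(S)+1$. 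So you should not try to push your proposed argument to the stated lower bound; instead the inequality in the proposition needs to be read (or corrected) as $cjh_R(G)-1\leq cjh_R(S)\leq cjh_R(G)$, which is exactly what your construction proves and is best possible.
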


\begin{proof}
\smallskip

\noindent {\it(1):} Let $H<G$ such that $cd_R(H)<\infty$ and let $H_i=G_i\cap H$. Then, $H=\underrightarrow{\lim}_{i\in I} H_i$ and
the theorem of Berstein (see Thm. 4.7 in \cite{bieri}) shows
$$cd_R(H)\leq \mbox{sup}\{cd_R(H_i)\}+1\leq k+1.$$ Since $k=cjh_R(G_j)$ for some $j\in I$, we have $cjh_R(G)=k \mbox{ or } k+1$.

For the homological part, let $H<G$ with $hd_R(H)<\infty$ and let $H_{\alpha}=G_{\alpha}\cap H$.
Then, $H=\underrightarrow{\lim}_{{\alpha}\in I} H_{\alpha}$ and  $$H_k(H,M)=\underrightarrow{\lim}_{{\alpha}\in I}H_k(H_{\alpha},M)$$ for each $k$ and a $\Z H$-module $M$. It
follows that $hd_R(H)\leq k$ and thus $hjh_R(G)\leq k$. Since $k=hjh_R(G_{\beta})$ for some ${\beta}\in I$, $hjh_R(G)=k$.

\smallskip

\noindent {\it(2):} This is a direct application of (1).

%\smallskip
%
%\noindent {\it(3):} Let $X$ be the universal covering tree of $Y$. For each $H<G$, $X$ is a $1$-dimensional free-$H$-CW-complex.
%
%%By using barycentric subdivision if necessary, we can assume that each cell stabilizer fixes it pointwise.
%Let $M$ be an $RH$-module and consider the spectral sequence
%$$E_{p,q}^1=\displaystyle{\bigoplus_{\sigma \in \Sigma_p}}
%H^R_q(H_{\sigma}, M_{\sigma})\Longrightarrow
%H^R_{p+q}(H, M)$$
%where $X_p$ is the collection of all the $p$-cells and $\Sigma_p$ denotes
%a set of representatives of all the $H$-orbits in $X_p$.  For details on this spectral sequence
%see \cite{brown}.
%
%Suppose $hd_R(H)< \infty$. Since $hd_R(H_{\sigma})\leq k$ for each $\sigma$, we have $E_{p,q}^1= 0$ when $p>1$ or when $q>k$. Therefore, by the corner
%argument, $H^R_{k+2}(H,M)=0$. Since $M$ is arbitrary, this implies that $hd_R(H)\leq k+1$.
\end{proof}

In the next proposition, we derive relations between jump heights of  groups forming a short exact sequence.

\begin{prop}{\label{sc:3.4}}{\normalfont Let $G$ be a group and let $R$ be a commutative ring with a unit.
\begin{enumerate}
\item Suppose $H$ is a finite index subgroup of $G$. Then $G$ has jump (co)homology over $R$
if and only if $H$ has jump (co)homology over $R$. Moreover,
 $hjh_R(G)=hjh_R(H)$ ($cjh_R(G)=cjh_R(H)$).

\vspace{1mm}

\item Suppose $$1\rightarrow T {\buildrel \iota \over \longrightarrow} G  {\buildrel \pi \over \longrightarrow} Q\rightarrow 1$$
is a short exact sequence of groups
such that $Q$ has jump (co)homology  and $T$ is  $R$-torsion. Then $G$ has jump
(co)homology of height $hjh_R(G)\leq hjh_R(Q)$ ($cjh_R(G)\leq cjh_R(Q)$).

\vspace{1mm}

\item Suppose $$1\rightarrow K {\buildrel \iota \over \longrightarrow} G  {\buildrel \pi \over \longrightarrow} Q\rightarrow 1$$
 is a short exact sequence of groups such that $Q$ has finite virtual (co)homological dimension over $R$ and $K$ has jump (co)homology
 over $R$. Then $G$ has jump (co)homology of
height $$hjh_R(G)\leq hjh_R(K)+vhd_R(Q)$$ $$(cjh_R(G)\leq cjh_R(K)+vcd_R(Q))$$ over $R$.
\end{enumerate}}
\end{prop}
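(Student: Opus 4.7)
The plan is to handle the three parts in order, relying on two standard facts: (i) for a finite-index subgroup $H\leq G$, one has $cd_R(G)=cd_R(H)$ whenever $cd_R(G)<\infty$, and analogously for $hd_R$ (see Bieri, \emph{Homological Dimension of Discrete Groups}); (ii) the Lyndon--Hochschild--Serre spectral sequence applied to an extension $1\to K\to G\to Q\to 1$ yields $cd_R(G)\leq cd_R(K)+cd_R(Q)$ and $hd_R(G)\leq hd_R(K)+hd_R(Q)$.

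For part (1), the inequality $cjh_R(H)\leq cjh_R(G)$ is immediate from Proposition \ref{sc:3.1}(1). Conversely, for any $L\leq G$ with $cd_R(L)<\infty$, the intersection $L\cap H$ is of finite index in $L$; by fact (i), $cd_R(L)=cd_R(L\cap H)\leq cjh_R(H)$. Taking suprema gives the reverse inequality. The homological argument is identical.

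For part (2), let $L\leq G$ with $cd_R(L)<\infty$; then every finite subgroup of $L$ must have order invertible in $R$. Any nontrivial element of $L\cap T$ would generate a finite subgroup of $L$ with order not invertible in $R$ (since $T$ is $R$-torsion in the sense used in Proposition \ref{sc:3.1}(2)), a contradiction. Therefore $L\cap T=1$, so $\pi$ embeds $L$ into $Q$ and $cd_R(L)=cd_R(\pi(L))\leq cjh_R(Q)$. The homological statement follows verbatim.

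For part (3), select $Q_0\leq Q$ of finite index realizing $cd_R(Q_0)\leq vcd_R(Q)$, and set $G_0=\pi^{-1}(Q_0)\leq G$, which is of finite index in $G$. By (1), $cjh_R(G)=cjh_R(G_0)$. For any $L\leq G_0$ with $cd_R(L)<\infty$, the subgroup $L\cap K\leq K$ satisfies $cd_R(L\cap K)\leq cjh_R(K)$ (otherwise $cd_R(L)=\infty$), while $L/(L\cap K)\hookrightarrow Q_0$ yields $cd_R(L/(L\cap K))\leq vcd_R(Q)$ by subgroup monotonicity of $cd_R$. Fact (ii) then gives $cd_R(L)\leq cjh_R(K)+vcd_R(Q)$, and the homological version is parallel. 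The main technical obstacle is ensuring that fact (i) is correctly invoked in both the cohomological and homological settings, since the equality $hd_R(G)=hd_R(H)$ for finite-index $H$ with $hd_R(G)<\infty$ is slightly less textbook-standard than its cohomological counterpart; however, it follows from the same transfer and spectral-sequence arguments.
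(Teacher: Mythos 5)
Your proof is correct and follows essentially the same route as the paper's: finite-index invariance of $cd_R$ (resp.\ $hd_R$) for part (1), injectivity of $\pi$ on any subgroup of finite (co)homological dimension for part (2), and reduction via (1) to a finite-index preimage of a finite-dimensional subgroup of $Q$ combined with the Lyndon--Hochschild--Serre subadditivity inequality for part (3). You merely spell out a couple of steps the paper leaves implicit, such as why $L\cap T=1$ in part (2).
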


\begin{proof}
\noindent {\it(1):}
%Clearly, if $G$ has jump cohomology over $R$, so does $H$.
 Suppose that $H$ has jump
cohomology over $R$. Let $T<G$ with $cd_R(T)<\infty$. Since $T\cap H$ is a finite index subgroup of $T$,
$cd_R(T\cap H)=cd_R(T)$. Therefore, $cd_R(T)\leq cjh_R(H)$ and $cjh_R(G)=cjh_R(H)$.

\smallskip

\noindent {\it(2):} Let $H<G$ with $cd_R(H)<\infty$.  Since $\pi|_{H}:H\to Q$ is injective, it follows
 that $cjh_R(G)\leq cjh_R(Q)$.

\smallskip

\noindent {\it(3):} By (1), we can assume $cd_R(Q)<\infty$. Let $H<G$ with $cd_R(H)<\infty$.
 %The short exact sequence $1\to \iota^{-1}(H)\to H \to \pi(H)\to 1$ implies
 Since $$cd_R(H)\leq cd_R(\iota^{-1}(H))+ cd_R(\pi(H)),$$ we have $$cd_R(H)\leq cjh_R(K)+cd_R(Q).$$

\end{proof}

\begin{remk}{\label{sc:3.5}} {\normalfont The cohomological part of the inequality in (3) cannot be generalized for short exact sequences where
the group $Q$ has jump cohomology, but not necessarily finite virtual cohomological dimension.

For example, consider the short exact sequence
$$0\rightarrow \Z\longrightarrow  \mathbb Q \longrightarrow  \mathbb Q/{\Z}\rightarrow  0.$$
Note that $cjh(\Z)=cd(\Z)=1$, $cjh(\mathbb Q/{\Z})=0$, and yet,
$cjh(\mathbb Q)=cd(\mathbb Q)=2$.}
\end{remk}

\medskip

There are some interesting properties of jump (co)homology for the classes of  solvable and linear groups which we present next.

\subsection{Solvable groups}

In what follows, let $R$ be a commutative integral domain of characteristic zero. First, we recall
a well-known result.

\begin{thm}{\label{sc:3.6}}{\normalfont (Stammbach, \cite{stam}) Let $G$ be an
$R$-torsion-free solvable group with Hirsch length $h<\infty$. Then, $hd_R(G)=h$
and there exist an $\mathbb FG$-module $A$ additively isomorphic to the fractional
field ${\mathbb F}$ of $R$, such that $H^{\mathbb F}_h(G, A)\cong {\mathbb F}$.}
\end{thm}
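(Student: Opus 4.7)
The plan is to induct on the Hirsch length $h$ of $G$, using the Lyndon--Hochschild--Serre spectral sequence and the structure of abelian groups of finite torsion-free rank. The upper bound $hd_R(G)\le h$ is obtained by a routine spectral-sequence argument; the content of the theorem lies in producing the witnessing coefficient module $A$ that realises the matching lower bound.

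For the base case $h=0$, the solvable group $G$ is torsion, and the $R$-torsion-freeness forces $hd_R(G)=0$, with $A=\mathbb{F}$ carrying the trivial action. For the inductive step, since $G$ is solvable and non-trivial, pick a non-trivial abelian normal subgroup $N$ of $G$ (say, the penultimate term of the derived series), giving a short exact sequence
$$1\to N\to G\to Q\to 1$$
with $h(N)+h(Q)=h$, both $N$ and $Q$ again $R$-torsion-free solvable. The inductive hypothesis gives $hd_R(Q)=h(Q)$ together with a witnessing rank-one $\mathbb{F}Q$-module $A_Q$, while for the abelian group $N$ one verifies $hd_R(N)=h(N)$ directly by writing $N$ as a directed union of finitely generated free abelian groups of rank at most $h(N)$. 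The LHS spectral sequence
$$E^2_{p,q}=H_p(Q,H_q(N,M))\Longrightarrow H_{p+q}(G,M)$$
then yields $hd_R(G)\le h(N)+h(Q)=h$ for any $\mathbb{F}G$-module $M$.

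The harder half is to construct an $\mathbb{F}G$-module $A$ additively isomorphic to $\mathbb{F}$ realising $H_h^{\mathbb{F}}(G,A)\cong \mathbb{F}$. The idea is first to find a twisted rank-one $\mathbb{F}N$-module $A_N$ with $H_{h(N)}^{\mathbb{F}}(N,A_N)\cong \mathbb{F}$, and then to lift it to a rank-one $\mathbb{F}G$-module $A$ whose restriction to $N$ is $A_N$ and whose twist character $\chi\colon G\to\mathbb{F}^{\times}$ is arranged so that the induced action of $Q$ on $H_{h(N)}^{\mathbb{F}}(N,A_N)$ matches the twist in $A_Q$. With such a choice, the corner term $E^2_{h(Q),h(N)}=H_{h(Q)}(Q,H_{h(N)}(N,A))$ equals $\mathbb{F}$, and because it sits in the top-right position of the spectral sequence no differential can hit or leave it, so it survives to $E^{\infty}$ and forces $H_h^{\mathbb{F}}(G,A)\cong \mathbb{F}$. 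The main obstacle is this compatibility step: extending the characters and checking the $Q$-action on the top homology of $N$, which is precisely where passing to the fractional field $\mathbb{F}$ (rather than working over $R$ itself) provides the extra flexibility needed to realise the required character.
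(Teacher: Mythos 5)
The paper does not prove this statement at all: it is quoted as Stammbach's theorem with a citation to his 1970 paper, so there is no in-house argument to compare yours against. Judged on its own, your outline follows the standard spectral-sequence route, but the induction as you set it up does not close, for two reasons. First, the abelian normal subgroup $N$ extracted from the derived series may be torsion, in which case $h(N)=0$ and $h(Q)=h(G)$, so the inductive hypothesis on Hirsch length cannot be applied to $Q$; you need a secondary induction (on derived length, say) or you must split off torsion first. Second, and more seriously, $Q=G/N$ need not be $R$-torsion-free even when $G$ is: for the Klein bottle group $G=\langle a,b\mid bab^{-1}=a^{-1}\rangle$ with $N=G'=\langle a^{2}\rangle$, the quotient $G/N\cong(\mathbb{Z}/2)\times\mathbb{Z}$ has $2$-torsion, so if $2$ is not invertible in $R$ your inductive hypothesis simply does not apply to $Q$. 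The clean fix is to decouple the two halves of the theorem: prove the existence of the witness module $A$ (hence the lower bound) entirely over the fraction field $\mathbb{F}$, where every group is $\mathbb{F}$-torsion-free and the induction closes with no hypothesis on $N$ or $Q$, then transfer the lower bound to $R$ by flatness of $R\to\mathbb{F}$; the $R$-torsion-freeness of $G$ is needed only for the separate upper-bound induction $hd_R(G)\leq h$.

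On the construction of $A$: the step you single out as the main obstacle is actually the easy part. Since $\chi|_N$ must be trivial for $H_{h(N)}(N,\mathbb{F}_{\chi|_N})$ to be nonzero at all (already for $N=\mathbb{Z}$, a nontrivial character kills $H_1=\ker(\psi(t)-1)$), the character $\chi$ is forced to factor through $Q$, and then there is nothing to extend: one pulls back $\mu\delta^{-1}$ along $G\to Q$, where $\mu$ is the character of $A_Q$ and $\delta$ is the character by which $Q$ acts on $H_{h(N)}(N,\mathbb{F})$. What genuinely needs verification, and is absent from your sketch, is that this top homology group is one-dimensional over $\mathbb{F}$ — i.e.\ that in the direct limit over finitely generated subgroups of $N$ the transition maps on $\Lambda^{h(N)}$ are multiplication by indices, hence invertible in $\mathbb{F}$ — and that twisting $A$ by a character of $Q$ twists the corner term $E^2_{h(Q),h(N)}$ by exactly that character.
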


\begin{lem}{\label{sc:3.7}}{\normalfont Let $G$ be a virtually polycyclic group. Then $G$ has jump (co)homology
 over $R$ and  $$hjh_R(G)=cjh_R(G)=vcd_R(G)=h(G).$$}
\end{lem}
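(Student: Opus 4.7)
The plan is to reduce everything to a torsion-free polycyclic finite-index subgroup and then apply Stammbach's theorem. Concretely, every virtually polycyclic group $G$ contains a finite-index torsion-free polycyclic subgroup $H$, and since passing to a finite-index subgroup preserves the Hirsch length, we have $h(H)=h(G)$. By Proposition \ref{sc:3.4}(1), $hjh_R(G)=hjh_R(H)$ and $cjh_R(G)=cjh_R(H)$, so it suffices to prove the equalities for $H$.

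For $H$ torsion-free polycyclic, every subgroup $H'\leq H$ is again torsion-free polycyclic with $h(H')\leq h(H)$. First I would invoke Stammbach's theorem (Theorem \ref{sc:3.6}) to conclude $hd_R(H')=h(H')$, which is finite. In particular, every subgroup of $H$ has finite $R$-homological dimension, bounded by $h(H)$, so $hjh_R(H)\leq h(H)$, and the bound is attained by $H$ itself. Thus $hjh_R(H)=h(H)$.

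For the cohomological side, I would use the classical fact that a torsion-free polycyclic group is a Poincaré duality group of dimension $h(H)$ (for instance via an iterated application of the LHS spectral sequence along its polycyclic series, or from the existence of a closed aspherical manifold model), giving $cd_R(H')=h(H')$ for every subgroup $H'\leq H$. Once this is in hand, the same bookkeeping as above yields $cjh_R(H)=h(H)$, and because $H$ is torsion-free of finite cohomological dimension $h(H)$, this also identifies $vcd_R(G)=cd_R(H)=h(H)=h(G)$. Combining these equalities gives $hjh_R(G)=cjh_R(G)=vcd_R(G)=h(G)$.

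The main obstacle is essentially bookkeeping: one must verify that Stammbach's result, stated for the full group, applies uniformly to every subgroup (which is automatic since subgroups of polycyclic groups are polycyclic), and that the cohomological version of the equality $cd_R(H')=h(H')$ is available over an arbitrary integral domain $R$ of characteristic zero, not just over $\mathbb{Q}$ or $\mathbb{Z}$. Once those two ingredients are in place, everything else follows formally from Proposition \ref{sc:3.4}(1) and the definitions of jump height and virtual (co)homological dimension.
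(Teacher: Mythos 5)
Your proposal is correct and follows the same overall strategy as the paper: pass to a finite-index torsion-free polycyclic subgroup $H$, note $h(H)=h(G)$, apply Stammbach's theorem (Theorem \ref{sc:3.6}) to get the homological dimension of every subgroup, and transfer back via Proposition \ref{sc:3.4}(1). The one place you diverge is the cohomological upper bound: you invoke the Poincar\'e duality structure of torsion-free polycyclic groups to get $cd_R(H')=h(H')$, and you rightly flag the need to check this over an arbitrary integral domain $R$ of characteristic zero. The paper sidesteps that issue entirely with a sandwich argument: for a poly-$\Z$ group $H'$ one has $cd_R(H')\leq h(H')$ directly from subadditivity of $cd_R$ under extensions along the polycyclic series (each infinite cyclic factor contributes at most one), and combining this with $hd_R(H')\leq cd_R(H')$ and Stammbach's $hd_R(H')=h(H')$ forces $cd_R(H')=hd_R(H')=h(H')$ with no appeal to duality. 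That route is more elementary and makes the uniformity over all subgroups and over all such $R$ automatic; your route buys nothing extra here, though it is also valid once the $PD$-over-$R$ point is settled (e.g.\ since $H^{h}(H',RH')\cong R\neq 0$ for a $PD_{h}^{\Z}$ group). Your write-up is in fact more explicit than the paper's, which only records the equality $cd_R(H)=hd_R(H)=h(G)$ for the single finite-index subgroup and leaves the jump-height bookkeeping implicit.
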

\begin{proof} Let $H$ be a finite index poly-$\Z$ subgroup of $G$.
Then, $hd_R(H)=h(G)$. Since, $hd_R(H)\leq cd_R(H)\leq h(H)$, we have
 $cd_R(H)=hd_R(H)=h(G)$.
\end{proof}

\begin{lem}{\label{sc:3.8}}{\normalfont Let $G$ be a nilpotent group. $G$ has jump homology
 over $R$ if and only if it has finite Hirsch length. In addition,
$hjh_R(G)=h(G)$.}
\end{lem}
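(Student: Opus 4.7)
The plan is to bootstrap from Stammbach's theorem (Theorem~\ref{sc:3.6}): over an integral domain $R$ of characteristic zero, an $R$-torsion-free solvable group of finite Hirsch length $h$ has $hd_R = h$. For a nilpotent group $G$ I will argue that the subgroups of $G$ with finite $R$-homological dimension are exactly the $R$-torsion-free ones of finite Hirsch length, and I will produce $R$-torsion-free subgroups of $G$ whose Hirsch lengths saturate $h(G)$.

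The easy inequality is that $hjh_R(G) \leq h(G)$ whenever $h(G)$ is finite. Any subgroup $H \leq G$ is nilpotent with $h(H) \leq h(G)$ (for instance, by intersecting the lower central series of $G$ with $H$ to get a central series of $H$ whose abelian factors embed into those of $G$). If $H$ contains an element whose order $n$ is not invertible in $R$, then $\mathbb{Z}/n \leq H$ already forces $hd_R(H) = \infty$ by Proposition~\ref{sc:3.1}(2); otherwise $H$ is $R$-torsion-free and Stammbach gives $hd_R(H) = h(H) \leq h(G)$. Thus each subgroup of $G$ has $hd_R$ equal to either $\infty$ or at most $h(G)$.

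For the matching lower bound and for the forward implication, I aim to produce, for each positive integer $n \leq h(G)$ (respectively each $n$ when $h(G) = \infty$), a finitely generated subgroup $K \leq G$ with $h(K) \geq n$. Let $G = \gamma_1(G) \geq \gamma_2(G) \geq \ldots \geq \gamma_{c+1}(G) = 1$ be the lower central series; since $h(G) = \sum_i h(\gamma_i / \gamma_{i+1})$, I pick $\mathbb{Z}$-linearly independent elements $\bar x_{i,j}$ in the abelian quotients $\gamma_i/\gamma_{i+1}$, enough so that their total count is $n$ (realizing each $h(\gamma_i/\gamma_{i+1})$ when $h(G)$ is finite, or taking $n$ of them from a single infinite-rank quotient when $h(G) = \infty$). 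Lift each $\bar x_{i,j}$ to $x_{i,j} \in \gamma_i(G)$ and let $K = \langle x_{i,j}\rangle$. The subgroups $K \cap \gamma_i(G)$ form a central series of $K$, since $[K, K \cap \gamma_i(G)] \subseteq K \cap \gamma_{i+1}(G)$; the successive quotient $K \cap \gamma_i(G)/K \cap \gamma_{i+1}(G)$ embeds into $\gamma_i(G)/\gamma_{i+1}(G)$ and contains the independent family $\bar x_{i,j}$, so $h(K) \geq n$.

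Being finitely generated and nilpotent, $K$ is polycyclic, and Mal'cev's theorem gives a torsion-free finite-index subgroup $K_0 \leq K$ with $h(K_0) = h(K) \geq n$. As a torsion-free solvable group of finite Hirsch length, $K_0$ is $R$-torsion-free, and Stammbach yields $hd_R(K_0) = h(K_0) \geq n$. Taking $n = h(G)$ when $h(G)$ is finite gives $hjh_R(G) \geq h(G)$ and hence $hjh_R(G) = h(G)$; taking arbitrarily large $n$ when $h(G) = \infty$ produces subgroups with arbitrarily large finite $hd_R$, so $hjh_R(G) = \infty$ in that case and $G$ does not have jump homology. The main technical step to verify carefully is the filtration argument producing the lower bound $h(K) \geq n$; once it and Mal'cev's virtual torsion-freeness are in hand, the conclusion is a direct appeal to Stammbach.
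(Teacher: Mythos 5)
Your proof is correct, and it reaches the same two pillars the paper rests on (Stammbach's Theorem \ref{sc:3.6} for the upper bound, and the fact that finitely generated nilpotent groups are virtually torsion-free polycyclic for the lower bound), but it gets to the second pillar by a genuinely different route. The paper's proof is much terser: for the ``only if'' direction it invokes Proposition \ref{sc:3.3}(2) to produce a single finitely generated subgroup $H$ with $hd_R(H)=hjh_R(G)$, then applies Lemma \ref{sc:3.7} to get $h(H)=vhd_R(H)=hjh_R(G)$, and concludes $h(G)=hjh_R(G)$ (leaving implicit the inequality $h(G)\leq hjh_R(G)$, which requires knowing that every finitely generated subgroup has Hirsch length at most $hjh_R(G)$). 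You instead construct, for each $n\leq h(G)$, an explicit finitely generated subgroup $K$ with $h(K)\geq n$ by lifting independent elements from the lower central series quotients, and then pass to a torsion-free finite-index subgroup via Mal'cev before applying Stammbach. Your construction amounts to a proof that $h(G)=\sup\{h(K):K\leq G\ \text{finitely generated}\}$ for nilpotent $G$, which the paper takes for granted; the trade-off is that your argument is longer but self-contained and makes the saturation of the Hirsch length by finitely generated subgroups completely explicit, whereas the paper's version is shorter by leaning on Proposition \ref{sc:3.3}(2) (itself a direct-limit argument) and Lemma \ref{sc:3.7}. Your handling of the easy inequality, splitting subgroups into those with $R$-torsion (infinite $hd_R$ by the argument of Proposition \ref{sc:3.1}(2)) and the $R$-torsion-free ones (Stammbach), also fills in a step the paper glosses over when it applies Stammbach to an arbitrary subgroup with $hd_R<\infty$.
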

\begin{proof} Suppose $G$ has jump homology over $R$.
Let $H$ be a finitely generated subgroup of $G$ such that $hd_R(H)=hjh_R(G)$. Then,
$h(H)=vcd_R(H)= hjh_R(G)$. This proves that $h(G)= hjh_R(G)$.

Now, suppose $h(G)<\infty$.
Let $H<G$ with $hd_R(H)<\infty$. By Stammbach's Theorem, $ hd_R(H)= h(H)$.
This shows that $hjh_R(G)\leq h(G)$.
\end{proof}

%\begin{lem}{\normalfont Suppose $1\rightarrow S {\buildrel \iota \over \rightarrow} G  {\buildrel \pi \over \rightarrow} Q\rightarrow 1$
% is a short exact sequence of (finitely generated) nilpotent groups
%such that $S$ and $Q$ have jump (co)homology over $R$. Then $G$ has jump (co)homology of
%height $hjh_R(G)= hjh_R(S)+ hjh(Q)$  ($cjh_R(G)= cjh_R(S)+ cjh_R(Q)$) over $R$.}
%\end{lem}
%\begin{proof}  This is an immediate consequence of 2.6.
%Let $H$ be a finitely generated subgroup of $G$.
%Let $S_1=\iota^{-1}(H)$ and
%$Q_1= \pi(H)$. Then we have an exact sequence $1\to S_1\to H \to Q_1\to 1$. Again by 2.3, we conclude
%$hjh(H)=h(H)=h(S_1)+h(Q_1)\leq h(S)+h(Q)=hjh(S) + hjh(Q)$. The result now follows from 2.2 (4).
%\end{proof}

\begin{thm}{\label{sc:3.9}}{\normalfont Let $G$ be a solvable group.

\smallskip

\begin{enumerate}
\item If $h(G)<\infty$, then $G$ has jump homology
over $R$ with $hjh_R(G)\leq h(G)$. Conversely, if $G$ is finitely generated with jump homology
over $R$, then  $h(G)< \infty$.
\smallskip
\item Let $\mathbb F$ be the fraction field of $R$. The conditions that $G$ has jump homology over $\mathbb F$,
$G$ has finite homological dimension over $\mathbb F$, and $G$ has finite Hirsch length are equivalent. Moreover, $$hjh_{\mathbb F}(G)= hd_{\mathbb F}(G)=h(G).$$
\end{enumerate}}
\end{thm}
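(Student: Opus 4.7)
The plan is to leverage Stammbach's theorem (Theorem~\ref{sc:3.6}) together with the finite derived series of solvable groups. For the forward direction of (1), I would take any subgroup $H\leq G$ with $hd_R(H)<\infty$. By Proposition~\ref{sc:3.1}(2), $H$ cannot contain nontrivial $R$-torsion, since such torsion would produce a cyclic subgroup of infinite $R$-homological dimension. As a subgroup of a solvable group, $H$ is itself solvable with $h(H)\leq h(G)<\infty$. Stammbach's theorem then yields $hd_R(H)=h(H)\leq h(G)$, so $hjh_R(G)\leq h(G)$.

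For the reverse direction of (1), suppose $G$ is finitely generated solvable with $hjh_R(G)=k<\infty$, and assume toward contradiction that $h(G)=\infty$. The derived series $G=G^{(0)}>G^{(1)}>\cdots>G^{(d)}=1$ has finite length, so by additivity of Hirsch length there is a largest index $i$ with $h(G^{(i)})=\infty$; then $h(G^{(i+1)})<\infty$, and the abelian group $G^{(i)}/G^{(i+1)}$ has infinite torsion-free rank, hence contains a copy of $\Z^n$ for every $n$. Lifting a $\Z$-basis of such a $\Z^n$ to elements $g_1,\ldots,g_n\in G^{(i)}$, the finitely generated subgroup $H_n=\langle g_1,\ldots,g_n\rangle$ fits into the exact sequence $1\to H_n\cap G^{(i+1)}\to H_n\to\Z^n\to 1$, so $n\leq h(H_n)\leq h(G^{(i+1)})+n<\infty$. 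Now $H_n$ is finitely generated solvable of finite Hirsch length, hence minimax and in particular virtually torsion-free; passing to a torsion-free subgroup $H_n'$ of finite index yields an $R$-torsion-free group with $h(H_n')=h(H_n)\geq n$. Stammbach's theorem applied to $H_n'$ then gives $hd_R(H_n')\geq n$, contradicting $hjh_R(G)=k$ as soon as $n>k$. The main obstacle is precisely this last passage to an $R$-torsion-free subgroup: without the classical result that finitely generated solvable groups of finite Hirsch length are virtually torsion-free, Stammbach's hypothesis would fail and the whole argument would collapse.

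For part (2), the decisive simplification is that, since $\mathbb{F}$ has characteristic zero, every group is automatically $\mathbb{F}$-torsion-free. If $h(G)<\infty$, Stammbach gives $hd_{\mathbb{F}}(G)=h(G)<\infty$; if $hd_{\mathbb{F}}(G)<\infty$, restricting a projective $\mathbb{F}G$-resolution to any subgroup shows $hjh_{\mathbb{F}}(G)\leq hd_{\mathbb{F}}(G)$; and if $G$ has jump homology over $\mathbb{F}$ but $h(G)=\infty$, the same derived series construction as in (1) produces finitely generated solvable $H_n\leq G$ with $n\leq h(H_n)<\infty$ to which Stammbach applies directly with no torsion-free passage needed, yielding $hd_{\mathbb{F}}(H_n)=h(H_n)\geq n$ and contradicting the finite jump height. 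The equalities $hjh_{\mathbb{F}}(G)=hd_{\mathbb{F}}(G)=h(G)$ then follow by combining $hjh_{\mathbb{F}}(G)\leq h(G)$ from part (1), the Stammbach identity $hd_{\mathbb{F}}(G)=h(G)$, and the trivial lower bound $hjh_{\mathbb{F}}(G)\geq hd_{\mathbb{F}}(G)$ obtained by taking $H=G$ in the definition of jump height.
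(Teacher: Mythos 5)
Your treatment of the forward implication in (1) and of part (2) is correct and essentially matches the paper: both reduce to Stammbach's theorem, using that a subgroup of finite $R$-homological dimension is automatically $R$-torsion-free, and that over $\mathbb F$ every group is $\mathbb F$-torsion-free, so no torsion-free passage is ever needed there. For the converse in (1) you take a genuinely different route from the paper, which argues directly rather than by contradiction: it writes $h(G)=h(G_1)+rk(G/G_1)$ for the commutator subgroup $G_1$ and invokes Lemma \ref{sc:3.8} to conclude $h(G_1)=hjh_R(G_1)<\infty$, whereas you descend the derived series and manufacture finitely generated subgroups $H_n$ of large finite Hirsch length.

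That converse argument, however, has a genuine gap at exactly the step you flag as critical. The assertion that a finitely generated solvable group of finite Hirsch length is minimax, and hence virtually torsion-free, is false on both counts: $C_p\wr\Z$ is finitely generated, metabelian, of Hirsch length $1$, yet it is not minimax (its base is an infinite direct sum of copies of $C_p$) and not virtually torsion-free (every finite-index subgroup meets the base in an infinite torsion group); moreover, even genuinely minimax finitely generated solvable groups need not be virtually torsion-free (P.~Hall's examples with quasicyclic centre). Consequently the subgroup $H_n'$ need not exist, and if $H_n$ contains $R$-torsion then $hd_R(H_n)=\infty$, which is perfectly compatible with $hjh_R(G)=k$ since jump homology permits subgroups of infinite homological dimension; no contradiction results. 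What your argument actually needs is that a finitely generated solvable group of infinite Hirsch length contains $R$-torsion-free subgroups of arbitrarily large finite Hirsch length, and this does not follow from the construction of the $H_n$ alone. Note that this obstruction evaporates over $\mathbb F$, which is why your part (2) is sound as written.
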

\begin{proof} {\it(1):} Following the proof of Lemma \ref{sc:3.8}, we have that if $h(G)<\infty$, then $hjh_R(G)\leq h(G)$.

Suppose now $G$  is finitely generated and has jump homology over $R$.  Let
$G_1$ denote the commutator subgroup. Since $h(G)=h(G_1) + rk(G/G_1)$, by Lemma \ref{sc:3.8},
$h(G)=hjh_R(G_1)+rk(G/G_1)<\infty$.

\smallskip

\noindent {\it(2):} Since every group is $\mathbb F$-torsion-free, this is an easy
application of Stammbach's Theorem.
\end{proof}

%\begin{thm}{\label{sc:3.9}}{\normalfont Let $G$ be a solvable group.
% If $G$ (is countable and) has finite Hirsch length, then  $G$ has jump (co)homology over $R$.
%Conversely, if $G$ has jump (co)homology over $R$, then $G$ has finite Hirsch length. In addition, $hjh_R(G)= h(G)$
%($h(G)\leq cjh_R(G)\leq h(G)+1$)}
%\end{thm}
%\begin{proof} Suppose $h(G)< \infty$. Let $H<G$ with $hd_R(H)<\infty$. By Stammbach's theorem, $ hd_R(H)= h(H)$. Hence, $hjh_R(G)\leq h(G)$.
%
%The cohomological part now follows directly from  Proposition \ref{sc:3.1} (3).
%
%Conversely, suppose $G$ has jump cohomology over $R$. Suppose, $H<G$ with $h(H)<\infty$. Then
%\smallskip
%
%\end{proof}

\subsection{Linear groups}
Let $R$ is a commutative integral domain of characteristic zero.

\begin{lem}{\label{sc:3.10}}{\normalfont
Let $G$ be a finitely generated linear group. Then $G$ has jump (co)homology over $R$
 if and only if $G$ has finite virtual cohomological dimension.  In addition, $$vcd(G)=cjh(G)\geq cjh_R(G)=vcd_R(G)\geq hjh_R(G)=vhd_R(G).$$}
\end{lem}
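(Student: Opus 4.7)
The plan is to reduce everything to the Alperin--Shalen criterion: a finitely generated linear group (in characteristic zero) has finite $vcd$ if and only if there is a uniform bound on the Hirsch lengths of its finitely generated abelian subgroups. One direction of the equivalence is trivial --- finite $vcd$ immediately gives jump (co)homology via a torsion-free finite index subgroup --- so the content is in showing that jump (co)homology forces this uniform bound. Since homological jump is the weakest of the three hypotheses, I will prove the bound assuming only that $G$ has jump homology over $R$ with height $k=hjh_R(G)$.

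The key observation is that if $A<G$ is a finitely generated abelian subgroup of Hirsch length $r$, then $A\supseteq \Z^r$ as a subgroup, and $hd_R(\Z^r)=r$ because $R$ has characteristic zero. By the jump hypothesis, either $hd_R(\Z^r)=\infty$ (which is impossible) or $r\leq k$. Thus the Hirsch lengths of finitely generated abelian subgroups are uniformly bounded by $k$, and Alperin--Shalen delivers $vcd(G)<\infty$. I expect this single step to be the only nontrivial ingredient; the rest is bookkeeping.

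Once $vcd(G)<\infty$, I fix a torsion-free subgroup $H$ of finite index in $G$. Then $cd(H)=vcd(G)<\infty$, and since a free $\Z H$-resolution tensored with $R$ is a free $RH$-resolution, $cd_R(H)\leq cd(H)$ and $hd_R(H)\leq cd_R(H)$. For a torsion-free group $H$ of finite (co)homological dimension, the jump height trivially equals the dimension itself (every subgroup $S<H$ satisfies the corresponding dimension inequality, with $H$ achieving equality). Proposition~\ref{sc:3.4}(1) then transfers these equalities from $H$ to $G$, yielding
\[cjh(G)=cd(H)=vcd(G),\quad cjh_R(G)=cd_R(H)=vcd_R(G),\quad hjh_R(G)=hd_R(H)=vhd_R(G).\]
Combined with $vcd(G)\geq vcd_R(G)\geq vhd_R(G)$, this produces the full chain of (in)equalities asserted in the lemma.

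The main conceptual obstacle is the first paragraph: one must recognize that the very weak condition ``$hd_R(\Z^r)=r$'' is enough to engage Alperin--Shalen, thereby upgrading the $R$-coefficient jump hypothesis all the way to integral $vcd$-finiteness. Everything afterward is a formal consequence of passing to a torsion-free finite index subgroup and invoking the subgroup-preservation of jump height proved earlier in Section~4.
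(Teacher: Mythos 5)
Your overall architecture matches the paper's proof (Alperin--Shalen for the hard direction, a finite-index torsion-free subgroup plus Proposition~\ref{sc:3.4}(1) for the chain of equalities), but the key step as you state it has a genuine gap: the Alperin--Shalen criterion is not about finitely generated \emph{abelian} subgroups but about finitely generated \emph{unipotent} subgroups, i.e.\ finitely generated torsion-free nilpotent ones. Your ``key observation'' --- that a subgroup of Hirsch length $r$ contains a copy of $\Z^r$, whence $hd_R(\Z^r)=r\leq hjh_R(G)$ --- is valid only in the abelian case. It fails for unipotent subgroups: the integral Heisenberg group has Hirsch length $3$ but contains no $\Z^3$, so bounding the Hirsch lengths of abelian subgroups does not, by your argument, bound the Hirsch lengths of the unipotent subgroups that Alperin--Shalen actually requires. (One \emph{can} pass from an abelian bound to a nilpotent bound, e.g.\ via the dimension bound for nilpotent Lie algebras in terms of their maximal abelian subalgebras applied to the Mal'cev completion, but that is a nontrivial extra ingredient you neither state nor prove.)

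The repair is short and is exactly what the paper does: a finitely generated unipotent subgroup $U$ is poly-$\Z$ of length $h(U)$, hence a Poincar\'e duality group of dimension $h(U)$, so $hd_R(U)=h(U)$ (this is the content of Lemma~\ref{sc:3.8}, via Stammbach's Theorem~\ref{sc:3.6}); the jump hypothesis then forces $h(U)\leq hjh_R(G)$ uniformly, and the genuine Alperin--Shalen theorem gives $vcd(G)<\infty$. Your second paragraph (passing to a torsion-free finite-index subgroup $H$ with $cd(H)=vcd(G)$ and transferring jump heights by Proposition~\ref{sc:3.4}(1)) is fine and agrees in substance with the paper, modulo the implicit use of Serre's theorem over $R$ to make $vcd_R$ and $vhd_R$ well defined, which the paper also elides.
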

\begin{proof}Suppose $vcd(G)<\infty$. Let $H<G$ with $[G:H]<\infty$ such that $cd(H)=vcd(G)$. Clearly, $cd(H)=cjh(G)$.
Since $cd_R(H)\leq cd(H)$, we have $cjh(G)\geq cjh_R(G)$.
%By the Theorem of Eilenberg-Ganea-Wall $gd(H)\leq \mbox{max}\{3, cd(H)\}$ where $gd(H)$ denotes the minimum
%dimension of a $K(H,1)$-complex. The cellular chain complex over the ring $R$ of the universal cover gives us a free-$RH$-resolution of $R$ of
%length $gd(H)$. Therefore, $cd_R(H)\leq \mbox{max}\{3, cd(H)\}$. This shows that $cjh_R(G)\leq \mbox{max}\{3, vcd(G)\}$.
By (3) of Proposition \ref{sc:3.1}, it also follows $cjh_R(G)\geq hjh_R(G)$.

Now, suppose $G$ has jump cohomology over $R$.  According to a deep theorem of Alperin and Shalen (see \cite{alperin}) $G$ has finite virtual cohomological dimension if and only if there is an upper bound on the Hirsch lengths of its finitely generated unipotent subgroups.
 Let $U$ be a finitely generated unipotent subgroup of $G$. From Lemma \ref{sc:3.8}, $h(U)=cjh_R(U)\leq cjh_R(G)$. This shows $vcd(G)<\infty$.

 The homological part is analogous.
\end{proof}

The notion of jump (co)homology allows us to extend the Alperin's and Shalen's result to linear groups which are not necessarily finitely generated.

\begin{thm}{\label{sc:3.11}}{\normalfont A (countable) linear group $G$ has jump (co)homology over $R$ if and only if there is a
finite upper bound on the Hirsch lengths of its finitely generated unipotent subgroups.}
\end{thm}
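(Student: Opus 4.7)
The plan is to bootstrap from the finitely generated case (Lemma \ref{sc:3.10}) via a directed-union argument. For the forward implication, suppose $k := hjh_R(G) < \infty$. If $U \leq G$ is a finitely generated unipotent subgroup, then $U$ is finitely generated torsion-free nilpotent (since we are in characteristic zero), and Proposition \ref{sc:3.1}(1) gives $hjh_R(U) \leq k$. Lemma \ref{sc:3.8} then yields $h(U) = hjh_R(U) \leq k$, establishing the required uniform bound on unipotent Hirsch lengths. The cohomological version is identical, after invoking the second half of Proposition \ref{sc:3.1}(3) to pass between $hjh_R$ and $cjh_R$.

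For the converse, assume a uniform bound $n$ on the Hirsch lengths of finitely generated unipotent subgroups. Since $G$ is countable, write $G = \bigcup_{i \in \mathbb{N}} G_i$ as a directed union of finitely generated subgroups with $G_i \leq G_{i+1}$. Each $G_i$ is finitely generated linear, and every finitely generated unipotent subgroup of $G_i$ is also a finitely generated unipotent subgroup of $G$, hence has Hirsch length at most $n$. Applying Lemma \ref{sc:3.10} to each $G_i$ yields $hjh_R(G_i) = vhd_R(G_i) < \infty$ (and similarly $cjh_R(G_i) = vcd_R(G_i) < \infty$). If one can exhibit a uniform constant $N$ with $hjh_R(G_i) \leq N$ for all $i$, then Proposition \ref{sc:3.3}(1) immediately produces jump homology for $G$ of height at most $N$ and jump cohomology of height at most $N + 1$.

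The main obstacle is producing such a uniform bound $N$. Here one exploits the fact that $G$, and hence every $G_i$, is contained in a single ambient $GL_d(K)$; the quantitative content of the Alperin-Shalen argument then bounds $vhd_R(G_i)$ purely in terms of $d$ and the unipotent Hirsch bound $n$, independently of how large the generating set of $G_i$ is or how its matrix entries sit inside $K$. This is the step requiring the greatest care, because a priori the field generated by the entries of $G_i$ could grow with $i$; the point is that the hypothesis of bounded unipotent Hirsch length controls exactly the data that Alperin-Shalen need. Once this uniform bound is secured, the directed-union argument of the previous paragraph concludes the proof.
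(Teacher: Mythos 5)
Your proposal follows the paper's proof essentially verbatim: the forward direction via Proposition \ref{sc:3.1}(1) and Lemma \ref{sc:3.8}, and the converse by writing $G$ as a directed union of finitely generated subgroups and combining Lemma \ref{sc:3.10} with Proposition \ref{sc:3.3}(1). The uniform bound you flag as the delicate step is exactly what the paper supplies by quoting the explicit estimate from the remark after Theorem 3.3 of \cite{alperin}, namely $vcd(G_{\alpha})\leq 1+C+N_{G_{\alpha}}+mn+Cn^{4}$, in which only $N_{G_{\alpha}}$ depends on the subgroup and is controlled by the hypothesis on unipotent Hirsch lengths.
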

\begin{proof} Suppose $G$ has jump
cohomology over $R$. Let $U$ be a finitely generated unipotent subgroup of $G$. Then,
$h(U)=cjh_R(U)\leq cjh_R(G)$. The proof of the homological part is similar.

For the converse, suppose the Hirsch lengths of all finitely generated unipotent subgroups is bounded by an integer $k$.
We will show that $G$ has jump homology over $R$.
%The cohomological version will follow by a similar argument
%or also by \ref{sc:3.1}.
Let $G$ be a direct limit of its finitely generated subgroups $G_{\alpha}$, $\alpha\in I$. By (1) of Proposition \ref{sc:3.3},
it is enough to show that each $G_{\alpha}$ has jump homology over $R$ and $\mbox{sup}\{hjh_R(G_{\alpha})\}<\infty$. According to
the remark after Theorem 3.3 of \cite{alperin}, $vcd(G_{\alpha})\leq 1+C+N_{G_{\alpha}}+ mn+ Cn^4,$
for each ${\alpha}\in I$. Here, the only constant that depends on $G_{\alpha}$ is $$N_{G_{\alpha}}=\mbox{sup}\{cd(U_{\beta})| U_{\beta}
\mbox{ a unipotent subgroup of } G_{\alpha}\}.$$ Therefore, $vcd(G_{\alpha})\leq 1 + C + k + mn + Cn^4$  and
$hjh_R(G_{\alpha}) = vhd_R(G_{\alpha})
                    %&\leq vcd(G_{\alpha})\\
                    \leq 1 + C + k + mn + Cn^4,$
for each ${\alpha}\in I$.

The cohomological version now follows by \ref{sc:3.1}.
\end{proof}

\begin{cor}{\label{sc:3.12}}{\normalfont Suppose $$\displaystyle{1\rightarrow S {\buildrel\iota \over \longrightarrow} G  {\buildrel \pi \over \longrightarrow} Q\rightarrow 1}$$
 is a short exact sequence of (countable) linear groups
such that $S$ and $Q$ have jump (co)homology over $R$. Then $G$ has jump (co)homology over $R$ and
$$hjh_R(G)\leq hjh_R(S)+ hjh_R(Q)$$ $$(cjh_R(G)\leq cjh_R(S)+ cjh_R(Q)+1).$$}
\end{cor}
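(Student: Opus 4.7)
The plan is to reduce to Proposition~\ref{sc:3.4}(3), using Lemma~\ref{sc:3.10} to supply its finite virtual homological dimension hypothesis. Given any subgroup $H\leq G$ with $hd_R(H)<\infty$, I would first exploit the fact that group homology commutes with direct limits to replace $H$ by a finitely generated subgroup $H_0\leq H$ satisfying $hd_R(H_0)=hd_R(H)$; it then suffices to bound $hd_R(H_0)$.

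Next I would apply Proposition~\ref{sc:3.4}(3) to the restricted short exact sequence $1\to H_0\cap S\to H_0\to\pi(H_0)\to 1$. The kernel $H_0\cap S$ inherits jump homology from $S$ with $hjh_R(H_0\cap S)\leq hjh_R(S)$ by Proposition~\ref{sc:3.1}(1). The quotient $\pi(H_0)$ is a finitely generated linear group, being a quotient of $H_0$ and a subgroup of the linear group $Q$, and it inherits jump cohomology from $Q$; Lemma~\ref{sc:3.10} therefore gives $vhd_R(\pi(H_0)) = hjh_R(\pi(H_0)) \leq hjh_R(Q)<\infty$, which is precisely the missing hypothesis. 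Proposition~\ref{sc:3.4}(3) then yields $hjh_R(H_0)\leq hjh_R(H_0\cap S)+vhd_R(\pi(H_0))\leq hjh_R(S)+hjh_R(Q)$, and since $hd_R(H)=hd_R(H_0)\leq hjh_R(H_0)$ with $H$ arbitrary, $G$ has jump homology with $hjh_R(G)\leq hjh_R(S)+hjh_R(Q)$.

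For the cohomological bound, I would invoke Proposition~\ref{sc:3.1}(3), applicable since $G$ is countable, to chain $cjh_R(G)\leq hjh_R(G)+1 \leq hjh_R(S)+hjh_R(Q)+1 \leq cjh_R(S)+cjh_R(Q)+1$, the last step using $hjh_R\leq cjh_R$ termwise. The main subtlety in the whole argument is ensuring that the quotient $\pi(H_0)$ has finite $vhd_R$, which is exactly where the linearity of $Q$ is essential: without it, Lemma~\ref{sc:3.10} would be unavailable and a naive Lyndon--Hochschild--Serre bound can collapse entirely, as already in $1\to n\mathbb{Z}\to\mathbb{Z}\to\mathbb{Z}/n\to 1$ over $R=\mathbb{Z}$, where the quotient has infinite $hd_R$ even though the corollary's conclusion holds.
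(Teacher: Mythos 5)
Your proposal is correct and follows essentially the same route as the paper: restrict the extension to a finitely generated subgroup, use Lemma \ref{sc:3.10} to convert the jump homology of the finitely generated linear quotient into the finite $vhd_R$ hypothesis needed for Proposition \ref{sc:3.4}(3), and then pass back to $G$ (the paper packages your direct-limit reduction as an appeal to Proposition \ref{sc:3.3}(1), and handles the cohomological bound via Proposition \ref{sc:3.1}(3) exactly as you do). You have in fact made explicit the one point the paper leaves implicit, namely that the inequality $vhd_R(\pi(H_0))\leq hjh_R(Q)$ is where linearity and finite generation are both used.
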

\begin{proof}
%The cohomological part follows directly from \ref{sc:3.10} and Selberg's Lemma.
%We give the proof of the homological part which is similar.
Let $H$ be a finitely generated subgroup of $G$.
%In view of Selberg's Lemma, we can assume that $H$ is torsion-free.
Let $S_1=\iota^{-1}(H)$ and $Q_1= \pi(H)$. By \ref{sc:3.4}, $hjh_R(H)\leq hjh_R(S_1)+vhd_R(Q_1)\leq hjh_R(S)+hjh_R(Q)$. Now, (1) of Proposition \ref{sc:3.3} concludes the proof.
\end{proof}

\medskip

\subsection{Groups without $R$-torsion}
Here we assume more generally that $R$ is a commutative ring with a unit. In \cite{nans}, we studied the following

\begin{conj}{\label{sc:3.13}}{\normalfont (\cite {nans}) Let $G$ be a group without $R$-torsion and let $k$ be nonnegative
integer. Then $G$ has jump cohomology of height $k$ over $R$ if and only if $G$ has finite cohomological
dimension $k$ over $R$.}
\end{conj}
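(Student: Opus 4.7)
The easy direction is immediate: if $cd_R(G) = k < \infty$, then every subgroup $H$ satisfies $cd_R(H) \leq k$, so $cjh_R(G) \leq k$, and since $G$ is itself a subgroup achieving $cd_R = k$, we get $cjh_R(G) = k$. All of the content lies in the reverse implication. Suppose $G$ is $R$-torsion-free with $cjh_R(G) = k$. The minimality in the definition of jump height produces a subgroup with $cd_R = k$, so $cd_R(G) \geq k$; and applying the jump condition to the subgroup $G \leq G$ itself yields the dichotomy $cd_R(G) \leq k$ or $cd_R(G) = \infty$. The whole task is therefore to rule out the latter possibility using the $R$-torsion-freeness hypothesis.

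My planned approach is transfinite induction on $k$ after first reducing to countable $G$. The reduction uses that cohomology commutes with directed unions and that Berstein's inequality, as applied in the proof of Proposition \ref{sc:3.3}, controls $cd_R$ of a directed limit in terms of the supremum of $cd_R$ of the constituents, up to a shift by one. The base case $k = 0$ follows from Proposition \ref{sc:3.1}(2): jump height $0$ forces $G$ to be a torsion group, and combined with $R$-torsion-freeness every finite subgroup has order invertible in $R$, so a Maschke-type averaging on finite subgroups promoted to directed limits delivers $cd_R(G) = 0$. For the inductive step with countable $G$ written as an ascending union of finitely generated subgroups $G_n$, I would use the Hochschild-Serre spectral sequence applied to a normal subgroup of each $G_n$ of strictly smaller jump height to inductively bound $cd_R(G_n)$ by $k$, and then assemble these bounds over $n$.

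The main obstacle is the finitely generated case, which amounts to the rigidity statement that a finitely generated $R$-torsion-free group $H$ in which every proper subgroup has $cd_R \leq k$ cannot itself have $cd_R$ in $\{k+1, k+2, \ldots, \infty\}$. For $R = \Z$ and $k = 1$ this essentially recovers the classical Stallings-Swan theorem that a locally free torsion-free group is free, so the conjecture is a strong generalization of that result. For $k \geq 2$ there is no analogue of the ends-based argument of Stallings, and any direct proof would need a higher-dimensional obstruction theory or a clever dimension-raising construction to force the contradiction. If a uniform proof is elusive, a reasonable fallback is to verify the conjecture within structured classes where the needed rigidity is already available, for example using Stammbach's theorem (Theorem \ref{sc:3.6}) for solvable groups and the Alperin-Shalen bound for linear groups exploited in Lemma \ref{sc:3.10} and Theorem \ref{sc:3.11}.
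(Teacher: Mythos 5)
This statement is a \emph{conjecture} in the paper (attributed to \cite{nans}); the paper offers no proof of it, only the observation that the direction $cd_R(G)=k\Rightarrow cjh_R(G)=k$ is immediate and the remark that the converse is known for groups in ${\sl H}\mathcal{F}$. Your proposal likewise does not constitute a proof. You correctly dispose of the easy direction and correctly isolate what must be shown --- that for an $R$-torsion-free $G$ with $cjh_R(G)=k$ the alternative $cd_R(G)=\infty$ must be excluded --- but the step that would accomplish this, your ``rigidity statement'' for finitely generated groups, is exactly the open problem, and you acknowledge having no argument for it. An outline whose central step is declared to be the main obstacle, requiring an as-yet-unknown higher-dimensional obstruction theory, is an honest assessment of the difficulty, not a proof.

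Two of your auxiliary claims are also wrong as stated. First, the base case: Maschke-type averaging over finite subgroups passed to directed limits controls homology, not cohomology (group cohomology does not commute with directed colimits), and since $cd_R(G)=0$ forces $G$ to be finite with $|G|$ invertible in $R$, an infinite locally finite group without $R$-torsion has $cd_R\geq 1$; your argument would ``prove'' $cd_R=0$ for such groups. The $k=0$ case of the conjecture actually asserts that an infinite $R$-torsion-free group cannot have every subgroup of cohomological dimension $0$ or $\infty$, and averaging does not deliver that. Second, the case $k=1$, $R=\Z$ is not the Stallings--Swan theorem: locally free torsion-free groups need not be free ($\mathbb{Q}$ is locally cyclic with $cd(\mathbb{Q})=2$), and Stallings--Swan applies only after one already knows $cd(G)=1$; the content of the conjecture there is precisely to exclude $cd(G)=\infty$, which Stallings--Swan does not address. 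Your fallback of verifying the statement on structured classes (solvable groups via Stammbach's theorem, linear groups via Alperin--Shalen, ${\sl H}\mathcal{F}$-groups via \cite{nans}) is sensible and consistent with what is actually known, but it proves a different, weaker statement than the conjecture itself.
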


It is immediate that if $cd_R(G)=k$, then $cjh_R(G)=k$.
In \cite{nans}, we proved that the converse holds when $G$ is in
 ${\sl H}{\mathcal F}$.

An interesting type of groups that have jump cohomology are groups with periodic cohomology.
Namely, a group $G$ has {\it periodic cohomology after $k$-steps over $R$} if there
exists an integer $q\geq 0$ such that the functors  $H^{i}_R(G, -)$
and $H^{i+q}_R(G, -)$ are naturally equivalent for all $i>k$.

For torsion-free groups, it is has been conjectured by Talelli (see \cite{tal2})
that the notions of periodic cohomology and finite cohomological dimension are equivalent. This can
be seen as a special case of Conjecture \ref{sc:3.13}.

A deep theorem of Adem and Smith shows the relevance of Talelli's Conjecture in understanding free group actions
on homology spheres.

\begin{thm}{\label{sc:3.14}} {\normalfont (Adem-Smith, 2001, \cite{AS}) A group $G$ has
periodic cohomology induced by a cup product map if and only if $G$ acts freely and properly on
a finite dimensional complex homotopy equivalent to a sphere. If, in addition,
$G$ is countable, then it acts freely, properly discontinuously, and smoothly on some $S^n\times \R^k$.}
\end{thm}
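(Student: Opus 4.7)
The plan is to treat the two directions of the equivalence separately, since the converse (realizing algebraic periodicity geometrically) is considerably more delicate than the forward implication.

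For the easier direction, suppose $G$ acts freely and properly on a finite-dimensional CW-complex $X \simeq S^n$. The Borel construction gives a fibration $X \to EG \times_G X \to BG$ with fiber homotopy equivalent to $S^n$, and since the action is free the total space is homotopy equivalent to the finite-dimensional complex $X/G$. In the Serre spectral sequence
\[
E_2^{p,q} = H^p(G; H^q(S^n; \Z)) \Longrightarrow H^{p+q}(X/G; \Z),
\]
finite-dimensionality of the abutment forces the transgression to produce a class $u \in H^{n+1}(G; \Z)$ for which cup product with $u$ is an isomorphism in sufficiently high degrees, yielding the desired cup-product periodicity.

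For the harder direction, suppose $G$ has periodic cohomology induced by cup product with a class $u$. Extending Swan's classical construction for finite groups, I would represent $u$ by a map $BG \to K(\Z, n)$ and use the periodicity to build, inductively, a finite-dimensional $G$-CW-complex whose homotopy type is that of a sphere and on which $G$ acts freely; the cup-product isomorphisms supply exactly the data needed to kill homology in alternating ranges in a controlled way. An equivalent algebraic reformulation is to first extract a projective periodic resolution of $\Z$ over $\Z G$ from the cup-product periodicity, and then to geometrically realize this resolution as the cellular chain complex of a free finite-dimensional $G$-space with the homotopy type of $S^n$.

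For the smoothness assertion under the countability hypothesis, once a free topological action on a finite-dimensional complex $\simeq S^n$ is in hand, I would stabilize by taking a product with $\R^k$ for sufficiently large $k$ and apply equivariant smoothing theory to promote the topological action to a smooth, properly discontinuous action on $S^n \times \R^k$; the countability assumption secures an equivariant embedding into a Euclidean space of controlled dimension. The principal obstacle throughout is the converse direction of the first part: the passage from an algebraic cup-product periodicity to a genuine geometric free action is highly nontrivial, requiring simultaneous control of finite-dimensionality, freeness of the action, and sphere-like homotopy type, and it is precisely this step that constitutes the main content of Adem and Smith's theorem.
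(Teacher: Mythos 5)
The paper does not prove this statement at all: it is quoted verbatim as a theorem of Adem and Smith \cite{AS} and used purely as a black box (to conclude that groups with periodic cohomology lie in $\N^{cell}_1(\mP_R)$). So there is no internal proof to compare yours against; the only question is whether your sketch would stand on its own, and it would not.

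Your forward direction is essentially the standard Gysin/Euler-class argument and is fine in outline, modulo the usual care about the orientation action of $G$ on $H^n(S^n;\Z)$, which may force twisted coefficients or a passage to an index-two subgroup. The genuine gap is the converse. ``Extending Swan's classical construction'' is not an available move for infinite discrete groups: Swan's argument depends essentially on the finiteness of $G$ --- finitely generated projective periodic resolutions, a finiteness obstruction in $\widetilde{K}_0(\Z G)$ that can be killed by passing to a multiple of the period, and the fact that one can work with complexes having finitely many cells in each dimension. None of this is automatic for a countable group with cup-product periodicity, and ``geometrically realize the periodic resolution as the cellular chain complex of a free finite-dimensional $G$-space of the homotopy type of a sphere'' is precisely the statement to be proved, not a method for proving it. The actual Adem--Smith argument proceeds quite differently: from the periodicity class they construct an orientable spherical fibration over $BG$ whose total space has finite cohomological dimension, and then invoke results of Connolly--Prassidis type to convert this into a genuine free, proper action on a finite-dimensional complex homotopy equivalent to a sphere; the smooth action on $S^n\times\R^k$ for countable $G$ comes from a further embedding and thickening argument, not from applying ``equivariant smoothing theory'' to an already-constructed topological action. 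As written, your proposal names the main difficulty and then restates it rather than resolving it.
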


Talelli's Conjecture asserts that  a torsion-free group that acts freely and properly
 discontinuously on some $S^n\times \R^k$, in fact, can act freely and properly discontinuously on a
 Euclidean space.

%Since jump cohomology is defined using cohomological dimension and shares similar properties, as
% in the case of cohomological dimension of group one would expect that there is a geometric interpretation
% of jump height. For this, we recall a proposition from \cite{nans}.
%height.

%
%%2.5
%\begin{point}{\normalfont (Prop. 2.5, \cite{nans})
% Let $G$ be a group acting freely and properly on an
%$m$-dimensional CW-complex $X$. Suppose there is an integer $n\leq
%m$ such that $H_i(X,{\Z})=0$ for all $i>n$, and
%$H_n(X,{\Z})\cong {\Z}$. Then $G$ has
%jump  (co)homology of height at most $m-n$.}
%\end{point}

\section{Identifying $\N^{cell}_1(\mP_R)$}

The subclass $\N^{cell}_1(\mP)\subseteq \N^{cell}(\mP)$ consists of groups that act
freely and cellularly on a finite dimensional CW-complex in $\mP$. If $\mP$ poses
no additional restrictions, then $\N^{cell}_1(\mP)=\N^{cell}(\mP)$
is the class of all groups. This follows from the fact that any group can be realized as a fundamental group
of a 2-dimensional CW-complex. On the other hand, if we consider the class
of polycyclic groups $\N^{cell}(\mP_4)$, then $\N_1(\mP_4)$ is exactly the class of
cyclic groups. Thus, identifying the subclass $\N^{cell}_1(\mP)$ can be seen as a first step in understanding
properties of groups in $\N^{cell}(\mP)$.
%In what follows, we investigate the properties of groups inside $\N^{cell}_1(\mP_R)$.
%For instance, by \ref{sc:3.14}, every group with periodic
%cohomology is in $\N^{cell}_1(\mP_R)$.
In this regard, we proceed to show that every group in $\N_1(\mP_R)$ has jump (co)homology over $R$.

First we need a proposition which is a generalization of a result obtained in \cite{nans} (see Prop. 2.5).

\begin{prop}{\label{sc:3.15}}{\normalfont Suppose $R$ is an integral domain of characteristic zero. Let $G$ be a group and  let $X$ be
an $n$-dimensional $G$-CW-complex. Suppose there exist an integer
$k$ such that for
each $i > k$, $H_i(X, \mathbb Z)$ is an $R$-torsion-free torsion group and $H_k(X, \mathbb Z)\cong {\mathbb Z}^m \oplus F$ where $m>0$
and  $F$ is an $R$-torsion-free finite group.
If all the stabilizer subgroups of the action of $G$ on $X$ have jump (co)homology over $R$
uniformly bounded by an integer $b$, then $G$ has jump (co)homology  over $R$ with height at most $ b + n - k +
\displaystyle { {1\over 2}{m(m-1)}}.$}
\end{prop}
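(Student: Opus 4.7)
The plan is to bound $cd_R(H)$ (and analogously $hd_R(H)$) for every subgroup $H\leq G$ with $cd_R(H)<\infty$, using two spectral sequences attached to the restricted $H$-action on $X$, together with an induction on $m$.

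I would first observe that each stabilizer $H_\sigma:=H\cap G_\sigma$ is simultaneously a subgroup of $H$ (so of finite $R$-cohomological dimension) and of $G_\sigma$ (which by hypothesis has jump cohomology of height $\leq b$), whence $cd_R(H_\sigma)\leq b$. With this in hand, I would bring in the two spectral sequences converging to the Borel equivariant cohomology $H^*_H(X,M)$: the Cartan--Leray (isotropy) spectral sequence
\[
E_1^{p,q}=\bigoplus_{\sigma\in X^{(p)}/H} H^q(H_\sigma,M_\sigma),
\]
supported in $0\leq p\leq n$, $0\leq q\leq b$, which forces $H^{p+q}_H(X,M)=0$ for $p+q>n+b$; and the Borel spectral sequence
\[
E_2^{p,q}=H^p(H,H^q(X,M))\ \Longrightarrow\ H^{p+q}_H(X,M),
\]
whose abutment is then constrained by this vanishing. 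Universal coefficients applied to the hypothesis on $H_*(X)$---where the $R$-torsion-free torsion condition forces the relevant Hom and Ext terms to vanish---will give $H^q(X,R)=0$ for $q>k$ and a distinguished free summand $R^m\subseteq H^k(X,R)$.

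The base case $m=1$ will be handled first. Here the Borel spectral sequence is supported in the columns $0\leq q\leq k$, with a rank-one free summand on the top row $q=k$. Multiplication by a generator of this summand yields an edge-type map $H^p(H,M)\to H^{p+k}_H(X,M)$, and a Gysin/Euler-class-style argument will show that a nonzero class in $H^p(H,R)$ with $p>b+n-k$ would survive to a nonzero class in total degree greater than $b+n$, contradicting the Cartan--Leray vanishing; this will yield $cd_R(H)\leq b+n-k$. For general $m\geq 1$, I would proceed by induction on $m$: choose a free generator of $H_k(X)/F$ and equivariantly attach cells to $X$ along a representing cycle, producing a new $G$-CW-complex $X'$ that still satisfies the hypotheses of the proposition with parameters $(n',k',m-1)$ and the same stabilizer bound $b$. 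The inductive hypothesis applied to $X'$, together with the incremental dimension/parameter shifts summed over $m-1$ iterations, will produce the total correction $1+2+\cdots+(m-1)=\tfrac12 m(m-1)$, giving the stated bound. The homological statement is proved in the same way, with chains and $\mathrm{Tor}$ in place of cochains and $\mathrm{Ext}$ throughout.

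The main obstacle lies in the inductive reduction step: I must design the equivariant cell attachment so that it (i) kills precisely one of the free generators of $H_k(X)$, (ii) preserves the $R$-torsion-free torsion structure of the higher homology, and (iii) maintains the stabilizer bound $b$, all while raising the controlling parameter by only one per iteration. The combinatorial coefficient $m(m-1)/2$ is the arithmetic signature of this cumulative cost compounded over the $m-1$ reduction steps.
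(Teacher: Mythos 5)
Your two-spectral-sequence skeleton (isotropy/Cartan--Leray versus Borel, compared through the common abutment $H^*_H(X,\cdot)$, with the observation that $cd_R(H\cap G_\sigma)\leq b$) is exactly the comparison the paper makes, and the universal-coefficients step killing the $R$-torsion-free torsion groups is also as in the paper. But the way you propose to produce the term $\tfrac12 m(m-1)$ is a genuine gap, and it bypasses the one issue that term is actually there to handle. In the paper, $\tfrac12 m(m-1)$ is $vcd(\mathrm{GL}_m(\mathbb Z))$: the group $G$ acts on $H_k(X)\cong\mathbb Z^m\oplus F$, one passes to a finite-index subgroup acting trivially on $F$ and through a representation $\rho:G\to \mathrm{GL}_m(\mathbb Z)$ on $\mathbb Z^m$, and then restricts to $H=\ker\rho$; the quotient $\rho(G)\leq \mathrm{GL}_m(\mathbb Z)$ costs at most $vcd(\mathrm{GL}_m(\mathbb Z))=\tfrac12 m(m-1)$, and on $\ker\rho$ the coefficients $\mathrm{Hom}(H_k(X),A)\cong A^m$ are untwisted, so the corner argument gives a nonzero class in total degree $cd_R(H)+k$, contradicting the isotropy bound $\leq b+n$. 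No induction on $m$ occurs. Your proposal never addresses the possibly nontrivial $G$-action on $H_k(X)$ at all; for $m\geq 2$ the row $q=k$ of your Borel $E_2$-page has twisted coefficients, and the ``multiplication by a generator of the free summand'' edge map you invoke does not exist until that action has been trivialized.

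Concretely, the inductive reduction you describe does not work as stated. A single free generator of $H_k(X)/F$ need not span an $H$-invariant (let alone $G$-invariant) summand, so attaching cells ``along a representing cycle'' equivariantly forces you to attach along its entire orbit, which can kill several generators at once, disturb $H_{k+1}$, and introduce new cells whose stabilizers are not controlled by $b$. Even granting such a construction, there is no argument for why the $j$-th reduction step should cost exactly $j$; the identity $1+2+\cdots+(m-1)=\tfrac12 m(m-1)$ is offered as an ``arithmetic signature'' rather than derived, i.e., the coefficient is reverse-engineered from the statement. You acknowledge this as the ``main obstacle,'' which is an accurate self-assessment: the proof is incomplete precisely there, and the fix is not to repair the induction but to replace it with the $\ker\rho$ reduction and the bound $vcd(\mathrm{GL}_m(\mathbb Z))=\tfrac12 m(m-1)$.
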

\begin{proof}We prove the cohomological statement and note that the homological part is analogous.

The action of $G$ on the complex $X$ induces an action on $H_k(X,
\mathbb Z)$, which may be nontrivial. Since $H_k(X, \mathbb
Z)\cong {\mathbb Z}^m\oplus F$,
$\mbox{GL}_m(\Z)$ is a finite index subgroup of $\mbox{Aut}(H_k(X,
\mathbb Z))$. Thus, we can find a finite index subgroup of $G$
that acts separately on the two summands $\mathbb Z^m$  and $F$,
and acts trivially on $F$. In view of (1) of Proposition \ref{sc:3.4}, we can
assume that $G$ acts this way. Then, the action is induced by a
representation of $\rho :G\to \mbox{GL}_m(\Z)$.
%$$1\longrightarrow H \longrightarrow G \longrightarrow
%\rho(G)\longrightarrow 1$$

Suppose, by contradiction, that $G$ has a subgroup of finite cohomological dimension over $R$ larger than
our estimate. Without loss of generality, we  can assume this for $G$ itself. Let $H=\mbox{ker}(\rho)$.  Since $$vcd_R(\rho(G))\leq vcd(\mbox{GL}_m(\Z))=\displaystyle{{1\over 2}{m(m-1)}},$$ we have $cd_R(H)> b + n - k$.

Let $h=cd_R(H)$. Then,
$H_R^i(H, M)= 0$ for all $i> h$  and all ${R}H$-modules $M$, and there exist a ${R}H$-module $A$ such
that $H_R^{h}(H, A)\ne 0$.

Consider the double complex $\displaystyle{\mbox{Hom}(P_*,
C^*(X,A))}$, where $P_*$ is a projective resolution of $R$ over ${R}H$ and $C^*(X,A)$ is the cellular co-chain
complex of $X$ with ${R}H$-module coefficients $A$. The
natural bi-grading of the complex gives us the
spectral sequence,
$$E^{p,q}_2(A)= H_R^p(H, H^q(X, A))
\Longrightarrow H_R^{p+q}(H, C^*(X, A)).$$
Observe that $E^{p,q}_2(A) = 0$ when $p> h$ or $q> k$. The corner argument then
shows,
$$ H_R^{h+k}(H, C^*(X,A))\cong
H_R^h(H, H^k(X, A)).$$
By the Universal Coefficient Theorem, we also have
$$0\rightarrow \mbox{Ext}_{\Z}^1(H_{k-1}(X),\Z)\rightarrow H^k(X, A)\rightarrow \mbox{Hom}(H_{k}(X),A)\rightarrow 0.$$
The associated long exact sequence in cohomology shows that $H_R^h(H, (H^k(X,A))$ surjects onto $H_R^h(H, \mbox{Hom}(H_{k}(X),A))$.
On the other hand,
\begin{align*}
H_R^h(H, \mbox{Hom}(H_{k}(X),A)) &\cong H_R^h(H, A^m)\\
                                    &\cong {\underbrace{{H_R^h(H, A)}\oplus \dots \oplus{H_R^h(H, A)}}_{\mathrm{m}}}\\
                                    &\ne 0.\\
\end{align*}
Therefore, $H_R^{h+k}(H, C^*(X,A))\ne 0$, showing that $$\mbox{dim}(H_R^{*}(H, C^*(X,A)))= h+k > b+n.$$

The double complex also gives the $E_1$-term spectral sequence,

$$E^{p,q}_1(A)=\displaystyle{\bigoplus_{\sigma \in \Sigma_p}}
H_R^q(H_{\sigma}, A^{\sigma})\Longrightarrow
H_R^{p+q}(H, C^*(X, A)),$$
where $X_p$ is the collection of all the $p$-cells and $\Sigma_p$ denotes
a set of representatives of all the $H$-orbits in $X_p$.

According to our hypotheses, $E^{p,q}_1 = 0$
when $p>n$ or $q>b$. This implies that   $$\mbox{dim}(H_R^{*}(H, C^*(X,
A)))\leq b+n,$$ contradicting our earlier estimate.

\end{proof}

\begin{thm}{\label{sc:3.16}}{\normalfont
Let $\mathcal{J_R}$ be the class of groups with jump cohomology over $R$
and let $\mathcal{VCD}$ denote the class of groups with finite virtual cohomological dimension.
Then, $$\mathcal{VCD}\subseteq \N^{cell}_1(\mP_R)\subseteq \mathcal{J_R}.$$}
\end{thm}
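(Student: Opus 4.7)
The plan is to prove the two inclusions separately, in opposite directions.

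The second inclusion, $\N^{cell}_1(\mP_R)\subseteq \mathcal{J_R}$, is essentially a direct application of Proposition \ref{sc:3.15}. If $G\in \N^{cell}_1(\mP_R)$, then $G$ acts freely and cellularly on a finite-dimensional $X\in \mP_R$, so every isotropy subgroup is trivial and thus has jump cohomology of height $0$ over $R$. Taking $b=0$ in Proposition \ref{sc:3.15} yields $cjh_R(G)\leq n-k+\tfrac{1}{2}m(m-1) <\infty$, where $n=\dim X$ and $k, m$ are the integers witnessing $X\in\mP_R$. Hence $G\in\mathcal{J_R}$.

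For the first inclusion, $\mathcal{VCD}\subseteq \N^{cell}_1(\mP_R)$, the plan is to construct a finite-dimensional free $G$-CW-complex whose integral homology meets Definition \ref{sc:08}. Given $G$ with $vcd(G)<\infty$, I first choose a torsion-free subgroup $H\leq G$ of finite index with $cd_{\Z}(H)<\infty$. By the Eilenberg--Ganea theorem, $H$ admits a finite-dimensional $K(H,1)$, whose universal cover $Y$ is a finite-dimensional contractible CW-complex on which $H$ acts freely and cellularly. I then form the induced $G$-space $X = G\times_H Y$, with $G$ acting by left translation on the first coordinate. As a topological space, $X$ is a disjoint union of $[G:H]$ copies of $Y$, so it is finite-dimensional and the $G$-action is free and cellular. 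Since each component is contractible, $H_0(X)\cong \Z^{[G:H]}$ and $H_i(X)=0$ for $i>0$, so $X\in\mP_R$ with $k=0$, $m=[G:H]$, and $F$ trivial. This shows $G\in\N^{cell}_1(\mP_R)$.

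The second inclusion is really a one-line corollary of Proposition \ref{sc:3.15}, so no obstacle there. The only delicate point in the first inclusion is invoking Eilenberg--Ganea to pass from finite cohomological dimension of $H$ to a finite-dimensional classifying space (in the borderline case $cd_{\Z}(H)=2$ one settles for a $3$-dimensional model, which is still finite-dimensional and suffices). After that, the induction construction $G\times_H Y$ and its trivial homology computation are completely straightforward, and the argument requires no further input.
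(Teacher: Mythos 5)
Your proposal is correct, and the two halves split as follows. The inclusion $\N^{cell}_1(\mP_R)\subseteq \mathcal{J_R}$ is handled exactly as in the paper: a free cellular action on $X\in\mP_R$ means all stabilizers are trivial, so Proposition \ref{sc:3.15} applies with $b=0$. For $\mathcal{VCD}\subseteq \N^{cell}_1(\mP_R)$, however, you take a genuinely different route. The paper starts from a finite-index normal subgroup $N$ with $cd(N)<\infty$ acting freely on a finite-dimensional contractible complex $X$, forms the \emph{coinduced} space $Y=\mathrm{Map}_N(G,X)$ (which is again contractible, with isotropy groups meeting $N$ trivially), and then kills the remaining isotropy by multiplying with a torus on which $G/N$ acts freely; the resulting free $G$-complex $Y\times T$ has the homology of a torus and lies in $\mP_R$ with $k=\dim T$, $m=1$. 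You instead use the \emph{induced} space $G\times_H Y$, a disjoint union of $[G:H]$ contractible copies of $Y$, which is free, cellular, finite-dimensional, and lies in $\mP_R$ with $k=0$, $m=[G:H]$, $F=0$ --- the definition of $\mP_R$ nowhere requires connectedness, so this is legitimate. Your construction is shorter and avoids the torus factor entirely; what the paper's construction buys is a connected model and, if one cares to track the height bound from Proposition \ref{sc:3.15}, a much better estimate (with $m=1$ the term $\tfrac{1}{2}m(m-1)$ vanishes, whereas your $m=[G:H]$ contributes $\tfrac{1}{2}[G:H]([G:H]-1)$); since the theorem asserts only the inclusion and not a bound on $cjh_R(G)$, this costs you nothing here. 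Both arguments rely on the same external input --- the existence of a finite-dimensional contractible free complex for a group of finite cohomological dimension (Eilenberg--Ganea, with the usual dimension-$2$ caveat you correctly flag) --- which the paper simply asserts.
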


\begin{proof} If $G\in \N^{cell}_1(\mP_R)$, then there is a finite dimensional free-$G$-CW-complex $X$
satisfying the hypotheses of \ref{sc:3.15}. So, $G$ has jump cohomology.

Suppose $G\in \mathcal{VCD}$. Let $N\lhd G$ such that $[G:N]< \infty$ and $cd(N)<\infty$.
Let $X$ be  a finite dimensional
contractible free-$N$-CW-complex. Proceeding as in the proof of Proposition \ref{sc:4}, we can construct a finite dimensional,
contractible CW-complex
$Y=\mbox{Map}_N(G, X)$. Since $N$ acts freely on $Y$, the intersection of each isotropy group of the action of
$G$ on $Y$ with $N$ is trivial.

Let $F=G/N$. Then $F$ acts freely and cellularly on a torus $T$. Let $G$ act on $T$
through the epimorphism of $G$ onto $F$ and let $G$ act on the product $Y\times T$ through the diagonal action.
We claim that this action is free. Indeed, the isotropy groups of the action of $G$ on $T$ are $N$ and the isotropy
groups of the action of $G$ on $Y$ intersect trivially with $N$.
\end{proof}

\begin{remk}{\label{sc:3.17}}{\normalfont Let $\mathcal T$ be the class consisting of only the trivial group. Observe that Conjecture \ref{sc:3.13}  implies that the class of torsion-free $\N^{cell}_1(\mP_{\Z})$-groups
 is exactly the class of $\sl H_1 \mathcal T$-groups, i.e. torsion-free $\sl H_1 \mathcal F$-groups.
In \cite{nans}, we have shown that  ${\sl H \mathcal T} \cap {\mathcal J_{\Z}}= {\sl H_1 \mathcal T}$.
Combining this together with \ref{sc:3.16}, implies $${\sl H \mathcal T} \cap \N^{cell}_1(\mP_{\Z})= {\sl H_1 \mathcal T}.$$}
\end{remk}

\begin{remk}{\label{sc:3.18}}{\normalfont It is known that the Thompson's group $F$ is of type  $\mbox{FP}_\infty$ but
not $\sl H \mathcal F$ (see \cite{BG}, \cite{krop}). This group can be
defined by the presentation,
$$\langle x_0, x_1, x_2, ... | x_i^{-1} x_n x_i = x_{n+1} \mbox{ for all }
i<n \mbox{ and } n\in {\mathbb N}\rangle.$$
%Yet, it also has a finite presentation,
%$$\langle x_0, x_1 | x_2^{x_0}=x_3, x_3^{x_1}=x_4\rangle.$$
We point out that $F$ does not have jump  (co)homology over $R$, because it has an infinite
rank abelian subgroup $${\mathbb Z}^\infty = \langle x_0
x_1^{-1}, x_2 x_3^{-1},  x_4 x_5^{-1}, ... \rangle.$$
This shows that
$\N^{cell}_1(\mP_R)$ does not contain $F$.}
\end{remk}

%Given $i\in \mathbb N$, let $\mathcal{J}_{i}$ be the class of groups of jump homology of height at most $i$.
%Define $\mathcal{J}_{\infty}=\cup_i \mathcal{J}_{i}$. Note that $\mathcal{J}_{\infty}$ is the class of all groups.
%If 4.3 is true, then $\mathcal{J}_{\infty}$ would be a countable directed union of $\N_1(\mP_4)$-groups. This would
%show that $\N_2(\mP_4)$ is the class of all groups.

%\begin{prop}{\normalfont Let $G$ be a solvable group and  $X$ be
%an $n$-dimensional $G$-CW-complex. Suppose there exist an integer
%$k\leq n$ such that $H_i(X, \mathbb Z)$ is torsion for
%each $i > k$ and $H_k(X, \mathbb Z)\cong {\mathbb Z}^m \oplus F$
%for some $m>0$, where $F$ is a finite group. If all the isotropy
%subgroups of the action of $G$ on $X$ have Hirsch lengths uniformly
%bounded by an integer $b$, then $h(G)\leq b + n - k +
%\displaystyle { {m(m-1)\over 2}}$.}
%\end{prop}
%
%\begin{proof} The proof is analogous to the proof of Proposition 4.1. This is because by Stammbach's
%Theorem \cite{stam}, for any
%solvable group $S$, $h(S)=hd_{\mathbb Q}(S)$ and if $h(S)=\infty$,
%then $S$ contains a subgroup of arbitrarily large finite Hirsch length. Now, imitating
%the proof of 4.1 using $\mathbb Q H$-resolutions, we obtain the result.
%\end{proof}
%
%Recall that a topological space $X$ is called {\it finitely dominated},
%if it admits maps $\phi : X\to Y$ and $\varphi : Y \to X$ with $P$
%a finite CW-complex, such that $\varphi \circ \phi$ is homotopic
%to $id_X$.

We end this section by an application of Proposition \ref{sc:3.15} to solvable groups.

\begin{thm}{\label{sc:3.19}}{\normalfont Let $G$ be a solvable group and let $X$ be a finite dimensional
$G$-CW-complex. Suppose there exists an integer $t$ such that $\displaystyle{\oplus_{i\geq t} H_i(X)}$ is finitely generated and infinite.
Then $G$ has finite Hirsch length if and only if there is an upper bound on the Hirsch lengths of  all the stabilizer subgroups.}
\end{thm}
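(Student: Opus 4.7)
The plan is to reduce this to Proposition \ref{sc:3.15} applied with $R=\mathbb{Q}$, and then to feed the resulting jump homology conclusion into the solvable-group equivalence of Theorem \ref{sc:3.9}(2).

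The forward direction is immediate: every stabilizer is a subgroup of the solvable group $G$, and for solvable groups Hirsch length is subgroup-monotone, so $h(G)<\infty$ bounds the Hirsch lengths of all stabilizers by $h(G)$ itself.

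For the nontrivial direction, suppose the stabilizers have Hirsch lengths uniformly bounded by some integer $b$. First I would unpack the homological hypothesis so that Proposition \ref{sc:3.15} can be applied over $R=\mathbb{Q}$. Since $X$ is finite dimensional and $\bigoplus_{i\geq t}H_i(X)$ is finitely generated, each $H_i(X)$ for $i\geq t$ is finitely generated and only finitely many are nonzero. Because the sum is infinite, the set $\{i\geq t\colon H_i(X)\text{ is infinite}\}$ is nonempty; let $k$ be its maximum. Then $H_k(X)\cong\mathbb{Z}^m\oplus F$ for some $m>0$ and some finite group $F$, while for every $i>k$ the group $H_i(X)$ is finite, hence torsion. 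Over $R=\mathbb{Q}$ every abelian group is automatically $\mathbb{Q}$-torsion-free (since every nonzero integer is a unit in $\mathbb{Q}$), so the $\mathbb{Q}$-torsion-freeness clauses in Definition \ref{sc:12} are trivially satisfied, and $X$ fits the hypotheses of Proposition \ref{sc:3.15} with $R=\mathbb{Q}$.

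Next I would bound the jump heights of the stabilizers. Each stabilizer $G_\sigma$ is a solvable subgroup of $G$ with $h(G_\sigma)\leq b$. By Theorem \ref{sc:3.9}(2), applied with $R=\mathbb{Q}$, such a group has jump homology over $\mathbb{Q}$ with $hjh_{\mathbb{Q}}(G_\sigma)=h(G_\sigma)\leq b$. Thus the hypotheses of Proposition \ref{sc:3.15} hold with this uniform bound $b$, and we conclude that $G$ itself has jump homology over $\mathbb{Q}$, with
\[
hjh_{\mathbb{Q}}(G)\leq b+n-k+\tfrac{1}{2}m(m-1),
\]
where $n=\dim X$. Applying Theorem \ref{sc:3.9}(2) in the reverse direction, this forces the solvable group $G$ to have finite Hirsch length, in fact with $h(G)=hjh_{\mathbb{Q}}(G)$.

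I do not anticipate a genuine obstacle: the only delicate step is the bookkeeping that extracts a single index $k$ and a rank $m>0$ satisfying Definition \ref{sc:12} from the hypothesis on $\bigoplus_{i\geq t}H_i(X)$, and the mild observation that $\mathbb{Q}$-torsion-freeness is automatic. Once those are in hand, the argument is a direct concatenation of Proposition \ref{sc:3.15} with Theorem \ref{sc:3.9}(2).
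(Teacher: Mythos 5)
Your proof is correct and follows exactly the paper's route: apply Theorem \ref{sc:3.9}(2) over $\mathbb{Q}$ to the solvable stabilizers, feed the resulting uniform bound on jump heights into Proposition \ref{sc:3.15} with $R=\mathbb{Q}$, and convert the conclusion back via Theorem \ref{sc:3.9}(2). The only difference is that you spell out the bookkeeping (extracting $k$ and $m>0$ from the finiteness hypothesis and noting that $\mathbb{Q}$-torsion-freeness is automatic) that the paper dismisses with ``it can now be easily seen.''
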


\begin{proof} Let $b$ be a bound on the Hirsch lengths of  all the stabilizer
subgroups. By Theorem \ref{sc:3.9}, every stabilizer subgroup has jump homology over $\mathbb Q$ with
jump height equal to its Hirsch length. It can now be easily seen that
 the group $G$ and  the complex $X$ satisfy the hypotheses of
\ref{sc:3.15} for $R=\mathbb Q$. Therefore, $h(G)=hjh_{\mathbb Q}(G)<\infty$.
\end{proof}

%\section{Question}

$${\bf Acknowledgment}$$
I owe many thanks to Alain Valette for fruitful conversations about group actions on trees and for recalling the result of Lemma \ref{sc:7}.

%\section{Questions}
%
%\begin{ques}{\normalfont It is interesting to know whether the finite generation assumption in the first
%part of 2.9 can be omitted. Namely, does there exists a solvable group with jump (co)homology but with infinite Hirsch length?}
%\end{ques}
%
%We seek a more general version of Lemma 2.7 which we phrase in the following
%
%\begin{ques}{\normalfont Does a (finitely generated) solvable group $G$ have $hjh_R(G)=h(G)$ ($cjh_R(G)=h(G)$),
% where $R$ a commutative ring with a unit?}
%\end{ques}
%
%\begin{ques}{\normalfont Are finitely generated linear groups with finite cohomological dimension of type FP? If the
%answer is yes, then $hd(G)=cd(G)$ and this would show that $hjh=cjh$ in 2.9.}
%\end{ques}
%
%The following question seeks a converse to Theorem 5.2.
%
%\begin{ques}{\normalfont If $G$ has jump homology over $R$, does $G\in\N^{cell}_1(\mP_R)$?}
%\end{ques}
%
%\begin{ques}{\normalfont Let $X$ be  an $(n+1)$-dimensional CW-complex such that $H_{n+1}(X)=0$
%and $H_n(X)=\Z$. Suppose $G$ is countable group that acts cellularly on $X$ such that all isotropy subgroups are finite (finite abelian).
%Is $G$  a virtually free group?}
%\end{ques}
%
%
%\begin{ques}{\normalfont Is the Thompson's group inside $\N^{cell}(\mP_R)$?}
%\end{ques}
%
%\begin{ques}{\normalfont Let $H$ be a subgroup of $F$ with jump cohomology. Does $H$ have
%finite cohomological dimension?}
%\end{ques}


\begin{thebibliography}{12345}

\bibitem{AS} Adem, A., and Smith, J. H., \emph{Periodic complexes and group
actions}. Annals of Math. \textbf{154} (2001), no. 2, 407--435.

\bibitem{alperin} Alperin, R.C., and Shalen, P.B., \emph{Linear Groups of
finite cohomological dimension}. Invent. Math. \textbf{66} (1982) 89--98.

\bibitem{bieri} Bieri, R., \textbf{Homological Dimension of Discrete Groups}, Queen
Mary College, London, UK, 1981.

\bibitem{brown} Brown, K., \textbf{Cohomology of Groups}, Graduate Text in Math.
\textbf{87}, Springer-Verlag, New York 1982.

\bibitem {BG} Brown, K., and Geoghegan, R., \emph{An infinite-dimensional
torsion-free FP$_\infty$ group}, Invent. Math. \textbf{77} (1984)
367--381.

\bibitem{cohen} Cohen, D., \textbf{Groups of Cohomological Dimension One}, Lecture
Notes in Mathematics, \textbf{245}, Berlin Heidelberg New-York,
Springer-Verlag 1972.

\bibitem{grun} Gruenberg, K., \textbf{Cohomological Topics in Group Theory},
Lecture Notes in Mathematics, \textbf{143}, Berlin Heidelberg
New-York, Springer-Verlag 1970.

\bibitem{illman} Illman, S., \emph{Smooth equivariant triangulation of $G$-manifolds for $G$ a finite group},
Math. Ann. \textbf{233} (1978), 199--220.


\bibitem{krop} Kropholler, P., \emph{On groups of type FP$_\infty$}, J. Pure.
Appl. Algebra \textbf{90} (1993), 55--67.

\bibitem{MT} Mislin, G., and Talelli, O., \emph{On groups which act freely and
properly on finite dimensional homotopy spheres} GAMA Proc. LMS
\textbf{275} (2000).

\bibitem{nans} N. Petrosyan, {\it Jumps in cohomology and free group actions},
Journal of Pure and Applied Algebra, Vol. 210 (2007) 695--703.

\bibitem{ol} A. Yu. Ol'shanskii, {\it An infinite simple Noetherian group without torsion},
 Izv. Akad. Nauk SSSR, Ser. Mat., 43, No. 6, (1979), 1328-1393 .

\bibitem{PV} I. Pays and A. Valette, {\it Free subgroups in the automorphism groups of trees},
Enseign. Math. (2), Vol. 37 (1991) 151--174.

\bibitem{ser} Serre, J.-P., {\emph Cohomologie des groupes discretes.} Annals
of Math. Studies \textbf{70} , Princeton (1971).

%\bibitem{stall} Stallings, J. R., \emph{On torsion free groups with infinitely many
%ends.} Annals of Math \textbf{88} (1968), 312--334.

\bibitem{stam} Stammbach, U., \emph{On the weak homological
dimension of the group algebra of solvable groups.} J. London Math.
Soc. (2) \textbf{2} (1970), 567--570.

%\bibitem{swan} Swan, R. G.,  \emph{Groups of Cohomological Dimension One}, J.
%Algebra \textbf{12} (1969), 585--601.

\bibitem{tal2} Talelli, O., \emph{Periodicity in Group Cohomology and
Complete Resolutions}, to appear in Bull. of LMS. \textbf{37}
(2005), 547--554.

\bibitem{tal3} Talelli, O., \emph{A characterization for cohomological
dimension for a big class of groups}, preprint.


\end{thebibliography}
\end{document}